\newcommand{\ap}{\mbox{\scriptsize ap}}
\newcommand{\supp}{\operatorname{Supp}} 
\newcommand{\asso}{\operatorname{Asso}}
\begin{document}

\newtheorem{definition}{Definition}
\newtheorem{lemma}{Lemma}
\newtheorem{proposition}{Proposition}
\newtheorem{claim}{Claim}
\newtheorem{theorem}{Theorem}
\newtheorem{conjecture}{Conjecture}
\newtheorem{corollary}{Corollary}
\theoremstyle{remark}
\newtheorem{remark}{Remark}

\numberwithin{definition}{section}
\numberwithin{lemma}{section}
\numberwithin{proposition}{section}
\numberwithin{claim}{section}
\numberwithin{theorem}{section}
\numberwithin{conjecture}{section}
\numberwithin{corollary}{section}
\numberwithin{remark}{section}
\numberwithin{equation}{section}

\title[Polyhedral models for generalized associahedra]{Polyhedral models for
	generalized associahedra via Coxeter elements}
\author{Salvatore Stella}
\address{\noindent Department of Mathematics, Northeastern University, Boston,
	MA 02115}
\email{stella.sa@husky.neu.edu}
\thanks{The author is partially supported by A. Zelevinsky's NSF grants
	DMS-0801187 and DMS-1103813}
\maketitle

Motivated by the theory of cluster algebras, F. Chapoton, S. Fomin and A. Zele\-vinsky 
associated to each finite type root system a simple convex polytope called
\emph{generalized associahedron}.
They provided an explicit realization of this polytope associated with a
bipartite orientation of the corresponding Dynkin diagram.

In the first part of this paper, using the parametrization of cluster variables
by their $g$-vectors explicitly computed by S.-W. Yang and A. Zelevinsky, we
generalize the original construction to any orientation. 
In the second part we show that our construction agrees with the one given by C.
Hohlweg, C. Lange, and H. Thomas in the setup of Cambrian fans developed by N.
Reading and D. Speyer.

\section{Introduction}
Much information on the structure of a cluster algebra $\mathcal{A}$ can be
deduced directly from a purely combinatorial gadget: its \emph{cluster complex}.
It is an abstract simplicial complex whose vertices are the cluster variables of
$\mathcal{A}$ and whose maximal simplices are given by clusters.
In this paper we restrict our attention to finite type cluster algebras; under
such assumption the cluster complex is finite.  

A complete classification of finite type cluster algebras was given in
\cite{ca2}: it is identical to the Cartan-Killing classification of semisimple
Lie algebras and crystallographic root systems.  In the same paper, under the assumption
that the initial cluster is \emph{bipartite}, Fomin and Zelevinsky provided an
explicit combinatorial description of the cluster complex obtained by labeling
its vertices with \emph{almost-positive roots} in the corresponding root system. 

They constructed a function on ordered pairs of labels, called
\emph{compatibility degree}, encoding whether the corresponding cluster
variables are \emph{compatible} (i.e. they belong to the same cluster),
exchangeable or neither. Its definition is purely combinatorial and does not
refer to the cluster algebra but just to the labels.  The description of the
cluster complex they presented is in terms of this function: compatible pairs of
almost positive roots form its $1$-skeleton; higher dimension simplices are
given by the cliques of the $1$-skeleton.

In \cite{y-system} the authors improved on this combinatorial model explaining how
almost positive roots give a geometric realization of the cluster complex.
They showed that the positive real span of the labels in any simplex of the
cluster complex is a cone in a complete simplicial fan: the \emph{cluster fan}.
Among the applications of this realization there is a parametrization of
cluster monomials in $\mathcal{A}$ with points of the root lattice $Q$ and an
explicit formula for all the exchange relations in the coefficient-free case.

Further study (\cite{associahedra}) of the cluster fan showed that it is the
normal fan of a distinguished polytope: the \emph{generalized associahedron} of
the given type. Its description is completely explicit: the authors discussed
all the constrains that its \emph{support function} must satisfy and then
provided a concrete function that meets them.

As noted above the construction in \cite{y-system} and \cite{associahedra}
depends on the labeling of cluster variables of $\mathcal{A}$ by almost positive
roots; such a parametrization is provided by their \emph{denominator vectors}
with respect to a bipartite initial cluster. 
Using the notion of \emph{$g$-vectors} from \cite{ca4}, in \cite{shih} Yang and
Zelevinsky generalized this parametrization of cluster variables to a family of
parametrizations, one for each acyclic initial cluster, by a subset $\Pi(c)$ of
the associated weight lattice (as customary we use a \emph{Coxeter element} $c$
in the Weyl group to keep track of the orientation of the initial cluster; see
(\ref{eqn:orientation-1}) and (\ref{eqn:orientation-2}) for details on the
conventions we adopt).

The first goal of this paper is to extend the results
from \cite{y-system} and \cite{associahedra} to each of these new
parametrizations.  Retracing the steps in those papers, for any choice of
acyclic initial cluster, we will construct a complete simplicial fan realizing
the cluster complex and we will show that it is the normal fan to a geometric
realization of a generalized associahedron. We can summarize our
claims as follows:
\begin{theorem}
	Let $\mathcal{A}$ be a cluster algebra of finite type with an acyclic initial
	cluster and let $c$ be the Coxeter element encoding the initial orientation.
	Let $\Pi(c)$ be the labeling set and $(\bullet||\bullet)_c$ its compatibility
	degree function both constructed in \cite{shih}. Then
	\begin{enumerate}
		\item 
			every $c$-cluster in $\Pi(c)$ (i.e. every maximal subset of $\Pi(c)$
			consisting of pairwise compatible weights) is a $\mathbb{Z}$-basis of the
			weight lattice $P$.

		\item
			The positive linear spans of the simplices in the clique complex induced
			by $(\bullet||\bullet)_c$ on $\Pi(c)$ form a complete simplicial fan
			$\mathcal{F}_c^\Pi$ realizing the cluster complex. Cluster monomials of
			$\mathcal{A}$ are in bijection with points of $P$.
			
		\item
			$\mathcal{F}_c^\Pi$ is the normal fan to a simple polytope: a geometric
			realization of the associated generalized associahedron.

		\item
			If $\mathcal{A}$ is coefficient-free then all its exchange relations are
			explicitly determined by the labels of exchangeable cluster variables.
	\end{enumerate}
\end{theorem}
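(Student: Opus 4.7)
The plan is to follow the roadmap of \cite{y-system} and \cite{associahedra} step by step, replacing almost positive roots by the weights in $\Pi(c)$ and the bipartite Coxeter element by an arbitrary acyclic $c$. The four assertions are tightly coupled: (1) is the algebraic backbone that makes the fan in (2) simplicial, (2) provides the combinatorial skeleton on which the support function in (3) is built, and (4) is an essentially formal consequence of (2) once cluster monomials are identified with lattice points of $P$.

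For part (1) I would argue by induction on mutation distance from the initial cluster. The initial $c$-cluster consists of the negative fundamental weights, which manifestly form a $\mathbb{Z}$-basis of $P$. The explicit $g$-vector mutation formulas of \cite{shih}, together with sign-coherence, show that a single mutation replaces one basis element by a vector whose coefficient on the exchanged element is $\pm 1$, i.e.\ by a unimodular transformation; this propagates the basis property through the entire exchange graph. For part (2) the cleanest route is to produce a piecewise-linear bijection of the ambient space $P_\mathbb{R}$ that carries $\mathcal{F}_c^\Pi$ onto a fan already known to be complete and simplicial — either the cluster fan of \cite{y-system} when $c$ is bipartite, or the $c$-Cambrian fan of Reading and Speyer in general. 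The key lemma is that this piecewise-linear map sends each maximal cone spanned by a $c$-cluster bijectively onto a maximal cone of the target fan; completeness and simpliciality then transfer, and the bijection between cluster monomials and points of $P$ follows immediately from (1).

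For part (3) the strategy is to exhibit an explicit support function $F_c\colon P_\mathbb{R}\to\mathbb{R}$ that cuts out the desired polytope as the intersection of half-spaces $\{x:\langle\gamma,x\rangle\le f_c(\gamma)\}$ indexed by $\gamma\in\Pi(c)$. Generalizing the formula in \cite{associahedra}, the coefficients $f_c(\gamma)$ can be written in terms of $c$-orbits on weights, and can be cross-checked against the bipartite formula by pulling back through the piecewise-linear bijection from part (2). The main verification is the wall-crossing identity: for every facet separating two adjacent maximal cones of $\mathcal{F}_c^\Pi$, the gradient jump of $F_c$ across the facet must be a positive multiple of the exchanged weight. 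Once this is established for every such wall, the normal fan of the polytope is forced to coincide with $\mathcal{F}_c^\Pi$, and simplicity of the polytope follows from simpliciality of the fan.

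Finally, for part (4) the exchange relations read off directly from the walls of $\mathcal{F}_c^\Pi$: each facet of a maximal cone corresponds to an exchangeable pair of cluster variables, the $g$-vectors of the two new monomials appearing in the relation are the two extremal rays on either side of the wall, and the exponents of the remaining factors are determined by the linear dependence among the $g$-vectors of the cluster, exactly as in Section~3 of \cite{y-system}. I expect the main obstacle to be part (3), specifically the wall-crossing check: the combinatorics of $c$-orbits for a general acyclic $c$ is considerably more involved than the two-class decomposition available in the bipartite case, and writing down a uniform closed form for $f_c$ that manifestly satisfies the identity at every wall is where the bulk of the technical work will sit.
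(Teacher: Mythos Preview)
Your overall roadmap is reasonable but diverges from the paper's argument in its two central technical devices, and your plan for part~(2) contains a genuine circularity.

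The paper does \emph{not} work directly in $\Pi(c)$. Its first key move is the linear isomorphism $\phi_c=(c^{-1}-1)\colon P\to Q$, which carries $\Pi(c)$ to a new set $\Phi_{\ap}(c)=\Phi_+\cup\{-\beta_i^c\}_{i\in I}$ of ``$c$-almost positive roots''. The point is that weights do not restrict well to Dynkin subdiagrams, but roots do; working in $\Phi_{\ap}(c)$ lets one run the rank inductions of \cite{y-system} and \cite{associahedra} essentially verbatim. Parts~(1) and~(2) are then proved for $\Phi_{\ap}(c)$ by rank induction (apply $\tau_c^{-1}$ until a negative root appears, strip it off, drop rank) and transported back to $\Pi(c)$ linearly. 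Your mutation/sign-coherence argument for~(1) is a legitimate alternative---the paper itself remarks that \cite{dwz2} does this in the skew-symmetric case---but it is not what the paper does, and you would need sign-coherence in the non-simply-laced types as an external input.

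The second key device is a family of bijections $\sigma_i\colon\Phi_{\ap}(c)\to\Phi_{\ap}(s_ics_i)$ lifting conjugation by an initial/final reflection. These connect an arbitrary $c$ to a bipartite $t$, where everything is already known from \cite{y-system,associahedra} via a further \emph{linear} map $t_-\colon\Phi_{\ge -1}\to\Phi_{\ap}(t)$. Parts~(3) and~(4) are obtained by pushing the bipartite inequalities and exchange relations through a chain of $\sigma_i$'s.

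Here is the gap in your plan for~(2). The maps $\sigma_i$ are exactly the ``piecewise-linear bijections'' you have in mind, but they are defined only on the finite set $\Phi_{\ap}(c)$ and are \emph{not} linear. To extend them piecewise-linearly to the ambient space you must already know that the cones on $c$-clusters form a fan---which is what you are trying to prove. The paper flags this explicitly (``these maps will not be linear so, a priori, they might not preserve all the properties we are interested into'') and sidesteps it by proving the unique $c$-cluster expansion (Proposition~\ref{prop-phi:unique-cluster-expansion}) directly via rank induction in $\Phi_{\ap}(c)$, without ever transporting the fan property through a non-linear map. Your alternative of invoking the $c$-Cambrian fan is circular in a different way: the identification $\mathcal{F}_c^\Pi=\mathcal{F}_c^C$ is the \emph{conclusion} of the paper (Theorem~\ref{thm:polytopes-are-equal} and its corollary), obtained only after the polytope in~(3) has been built independently; it is not available as an input.

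For~(3), your wall-crossing outline is correct in spirit and matches the paper's use of Lemma~\ref{lemma:criterion-polytopality}, but the paper does not attempt a closed formula for the support function valid for all $c$. Instead it shows that the relevant inequalities $F_c(\alpha)+F_c(\gamma)>F_c(\alpha+\gamma),\,F_c(\alpha\uplus_c\gamma)$ are preserved under each $\sigma_i$ (equation~(\ref{eqn:sums-in-sums}) and Proposition~\ref{prop:inequalities}), so the bipartite inequalities of \cite{associahedra} propagate to every $c$. This is where the ``elementary move'' machinery does the real work and replaces the uniform closed form you anticipate struggling with.
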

The proof will be split into sub-statements, namely Theorems
\ref{thm-pi:z-basis}, \ref{thm-pi:complete-fan}, \ref{thm-pi:polytopality}, and
\ref{thm-pi:exchange-relations}. Some of these results were already
proved in less generality or were already conjectured; we will provide explicit
references in Section \ref{sec:preliminaries}.

It turns out that our polytopes are the same as those studied in
\cite{sortable-polytopality} in the setup of \emph{Cambrian fans} developed by
Reading and Speyer. The construction we propose, however, is different from the
one by Hohlweg, Lange, and Thomas. This provides us with an alternative
prospective on $c$-cluster combinatorics that allows us to recover all the exchange
relations of the associated coefficient-free cluster algebra and to answer
positively to Problem 4.1 posed in \cite{review-associahedra}. 

To explain what we mean by ``different'' recall that the definition of Cambrian
fans is given in terms of its maximal cones as opposed to the definition of
cluster fans that builds up from the $1$-skeleton. Indeed to each Coxeter element
$c$ of a finite type Weyl group $W$ one can associate a lattice congruence on
the group itself (seen as a lattice for the right weak order). This produces a
coarsening of the associated \emph{Coxeter fan} obtained by glueing together
cones corresponding to elements in the same class (recall that the Coxeter fan
is the complete simplicial fan in the weight space of $W$ whose maximal cones
are the images of the fundamental Weyl chamber under the action of the group).
The approach used in \cite{sortable-polytopality} to show that the Cambrian
fans are polytopal follows the same philosophy: they begin from the
\emph{generalized permutahedron} associated to $W$ seen as intersection of
half-spaces and, again using the lattice congruence induced by $c$, they remove
a certain subset of them to make it into a generalized associahedron.

The second goal of this paper is to show that the generalizations of
the cluster fans we propose coincide with the Cambrian fans of Reading and
Speyer.  To do so it suffices to show that the polyhedral models for the
generalized associahedra  we build are the same as the realizations given in
\cite{sortable-polytopality}.
Note that, in type $A$,  the interaction between
the geometric realizations of the associahedron by Hohlweg, Lange, and Thomas
and the original realization by Chapoton, Fomin, and Zelevinsky has been already
investigated in \cite{many-realizations}.

The paper is structured as follows: in Section \ref{sec:preliminaries}, after
having recalled the required terminology and having set up some notations, we
discuss in more details our generalizations of the results in
\cite{y-system} and \cite{associahedra} and we provide an idea of the
strategy we adopt to prove them. We then recall some more terminology
and explain how our construction relates to Cambrian fans and to the polytopes
from \cite{sortable-polytopality}.

In section \ref{sec:phi-ap} we introduce the main tool of the paper: the set of
\emph{$c$-almost-positive roots} $\Phi_{\ap}(c)$.  Many arguments from
\cite{y-system} and \cite{associahedra} require to perform an induction on the
rank of the cluster algebra; the labeling of cluster variables by
almost-positive roots is ideal for such a purpose. In our case, however, we are
given a set of weights to parametrize the vertices of the cluster complex
therefore we can not generalize those proofs directly. The solution we adopt is
to identify the weight lattice with the root lattice in such a way that the
restriction to a smaller rank cluster sub-algebra can be expressed easily in
terms of the labels in a new set $\Phi_{\ap}(c)$ (the image of $\Pi(c)$ under
this identification).

Section \ref{sec:bipartite} deals with a bipartite orientations. We show that, in
this case, our results follow directly from their analogues from \cite{y-system}
and \cite{associahedra}.

Section \ref{sec:technical-results} contains the proofs of some technical
results we need in Section
\ref{sec:proof-main-results} where we complete the proofs of the main results of
the first part of the paper.

The paper is concluded by Section \ref{sec:cambrian} where we show
that our realizations of the generalized associahedra coincide with those
constructed by Hohlweg, Lange, and Thomas and therefore that our generalization
of cluster fans is a different presentation of Cambrian fans.

\section{Preliminaries} 
\label{sec:preliminaries}
We start by setting up notation and recalling some terminology and results from
\cite{shih}.
Let $I$ be a finite type Dynkin diagram;  with a small abuse of notation denote
by $I$ also its vertex set.  Let $W$ be the associated Weyl group with simple
reflections $\left\{ s_1\right\}_{i\in I}$ and let $A=(a_{ij})_{i,j\in I}$ be
the corresponding Cartan matrix. 

Recall that an element $c$ of $W$ is said to be \emph{Coxeter} if every simple
reflection appears in a  reduced expression of $c$ exactly once. To each Coxeter
element $c$ associate a skew-symmetrizable matrix $B(c)=(b_{ij})_{i,j\in I}$ as
follows. For $i$ and $j$ in $I$, write $i\prec_c j$ if $i$ and $j$ are connected
by an edge  and $s_i$ precedes $s_j$ in a reduced expression of $c$. Set then 
\begin{equation}
	b_{ij}:=\left\{ 
	\begin{array}[]{ll}
		-a_{ij}& \mbox{if } i\prec_c j\\
		a_{ij}& \mbox{if } j \prec_c i\\
		0 & \mbox{otherwise}\,.
	\end{array}\right.
	\label{eqn:orientation-1}
\end{equation}
Note also that Coxeter elements are in bijection with orientation of $I$
under the convention 
\begin{equation}
	j\rightarrow i
	\qquad\Leftrightarrow\qquad
	i\prec_c j
	\,.
	\label{eqn:orientation-2}
\end{equation}
\begin{remark}
	In each Weyl group there is a distinguished class of Coxeter elements (call
	them \emph{bipartite}) corresponding to orientations of $I$ in which each node
	is either a source or a sink. Following the notation of \cite{y-system}, we
	denote bipartite Coxeter elements by $t$.
\end{remark}

For a given Coxeter element $c$ denote by $\mathcal{A}_0(c)$ the
coefficient-free cluster algebra with the initial $B$-matrix $B(c)$.  Let
$\left\{ \omega_i \right\}_{i\in I}$ be the set of fundamental weights
associated to $I$ and $w_0$ the longest element in $W$.  Set $h(i;c)$ to be the
minimum positive integer such that 
\begin{equation*}
	c^{h(i;c)}\omega_i=-\omega_{i^*}
\end{equation*}
where $\omega_{i^*}:=-w_0\omega_i$ (Cf. Proposition 1.3 in \cite{shih}).

By theorem 1.4 in \cite{shih} the set of weights
\begin{equation*}
	\Pi(c):=\left\{ c^m\omega_i : i\in I, 0\le m\le h(i;c) \right\}
\end{equation*}
parametrize the cluster variables in $\mathcal{A}_0(c)$. The correspondence is
given associating to each cluster variable its $g$-vector as defined in
\cite{ca4}; in particular cluster variables in the initial cluster correspond to
fundamental weights. 

The set $\Pi(c)$ can be made into an abstract simplicial complex of pure
dimension $n-1$ (the \emph{$c$-cluster complex}) as follows. 
The cluster algebra structure induces a permutation on $\Pi(c)$
\begin{equation*}
	\tau_c^\Pi(\lambda):=
	\left\{ 
	\begin{array}[]{ll}
		\omega_i &  \mbox{if } \lambda=-\omega_i\\
  	c\lambda & \mbox{otherwise}
	\end{array}
	\right.
\end{equation*}
and a (unique) $\tau_c^\Pi$-invariant \emph{$c$-compatibility degree} function
defined by the initial conditions
\begin{equation*}
	\left( \omega_i||\lambda \right)^\Pi_c:=
  \left[ (c^{-1}-1)\lambda;\alpha_i \right]_+
\end{equation*}
where $[\bullet;\alpha_i]$ is the coefficient of $\alpha_i$ in $\bullet$
expressed in the basis of simple roots and $\left[ \bullet \right]_+$ denotes
$\max\left\{ \bullet,0 \right\}$ (Cf. Proposition 5.1 in \cite{shih}).

Note that the action of $\tau_c^\Pi$ on $\Pi(c)$ is, by construction, compatible
with the action of $w_0$ on $I$; that is any $\tau_c^\Pi$-orbit contains a
unique pair $\left\{ \omega_i,\omega_{i^*} \right\}$ (or a single fundamental
weight $\omega_i$ if $i=i^*$).

Call two weights $\lambda$ and $\mu$ in $\Pi(c)$ \emph{$c$-compatible} if
\begin{equation*}
	\left( \lambda||\mu \right)_c^\Pi=0
	\,.
\end{equation*}
This definition makes sense since the $c$-compatibility degree satisfies
\begin{equation*}
	\left( \lambda||\mu \right)_c^\Pi=0
	\Leftrightarrow
	\left( \mu||\lambda \right)_c^\Pi=0
	\,.
\end{equation*}

The $c$-cluster complex $\Delta_c^\Pi$ is defined to be the abstract simplicial
complex on the vertex set $\Pi(c)$ whose 1-skeleton is given by $c$-compatible
pairs of weights and whose higher dimensional simplex are given by the cliques
of its 1-skeleton.  We refer to its maximal simplices as \emph{$c$-clusters};
this name already appeared in the work of Reading and Speyer in a different
setup, we will discuss later on how the two notions are related.

The first step in order to construct a complete simplicial fan realizing the
$c$-cluster complex is to show that we can associate an $n$-dimensional cone to
each $c$-cluster.
\begin{theorem}
	Each $c$-cluster in $\Delta_c^\Pi$ is a $\mathbb{Z}$-basis of the weight
	lattice $P$. 	
	\label{thm-pi:z-basis}
\end{theorem}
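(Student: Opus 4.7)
The plan is to pass through the already-established bipartite case and lift the result to arbitrary acyclic orientations by an induction on the rank $n=|I|$.

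\textbf{First step: bipartite reduction.} When $c=t$ is bipartite, I would appeal to the identification of $\Pi(t)\subset P$ with $\Phi_{\ap}(t)\subset Q$ built in Section~\ref{sec:phi-ap}. Under this identification $\Pi(t)$ becomes the set of almost positive roots and $t$-compatibility is matched with the classical compatibility degree of \cite{y-system}; the statement then reduces to the known theorem that every cluster of almost positive roots is a $\mathbb{Z}$-basis of the root lattice $Q$. Thus the bipartite case is essentially a restatement of \cite[Theorem~1.8]{y-system}, and this is exactly what Section~\ref{sec:bipartite} is designed to establish.

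\textbf{Second step: inductive extension.} For general acyclic $c$, fix a source $i$ of the orientation and consider a $c$-cluster $C$ that contains $\omega_i$. Removing $\omega_i$ from $C$ produces a $c'$-cluster in the rank-$(n-1)$ cluster sub-algebra associated to $I\setminus\{i\}$ (where $c'$ is the Coxeter element induced by $c$ on the smaller diagram). The inductive hypothesis grants that this restricted collection is a $\mathbb{Z}$-basis of the corresponding sub-lattice; adjoining $\omega_i$ recovers a $\mathbb{Z}$-basis of the full weight lattice $P$. This reduction is the role played by $\Phi_{\ap}(c)$ in the paper.

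\textbf{Third step: handling clusters without fundamental weights.} To reach an arbitrary $c$-cluster from one containing a fundamental weight, I would exploit the $\tau_c^\Pi$-action: every $\tau_c^\Pi$-orbit in $\Pi(c)$ meets the set $\{\omega_1,\ldots,\omega_n\}$, so iterated application of $(\tau_c^\Pi)^{\pm1}$ to any $c$-cluster eventually produces one containing some $\omega_i$. Since $\tau_c^\Pi$ permutes $c$-clusters (by $\tau_c^\Pi$-invariance of $(\bullet||\bullet)_c^\Pi$), it remains to show that $\tau_c^\Pi$ preserves the $\mathbb{Z}$-basis property. This is immediate on the portion of $\Pi(c)$ where $\tau_c^\Pi$ acts as the linear automorphism $c$; the only delicate point is the behavior on $c$-clusters containing one of the weights $-\omega_j$, where $\tau_c^\Pi$ is piecewise-defined.

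\textbf{Main obstacle.} The hard part will be precisely this last verification: controlling $\tau_c^\Pi$ near the weights $-\omega_j$ so that the image of a $\mathbb{Z}$-basis is again a $\mathbb{Z}$-basis of $P$. I expect the argument to rest on the explicit combinatorial description of $\Phi_{\ap}(c)$ built in Section~\ref{sec:phi-ap}, together with the technical identities on $c$-compatibility assembled in Section~\ref{sec:technical-results}; these should be tailored to tracking exactly which weights can coexist with a given $-\omega_j$ inside a $c$-cluster, and how the exchange relation at $-\omega_j$ expresses its replacement as an integer combination of the remaining cluster elements with coefficient $-1$ on $-\omega_j$ itself.
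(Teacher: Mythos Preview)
Your outline has the right skeleton (rank induction plus the $\tau_c$-action) but you have introduced difficulties that the paper avoids, and the ``main obstacle'' you flag is in fact not there.

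The paper does not prove the statement in $\Pi(c)$ at all. It first applies the linear lattice isomorphism $\phi_c=(c^{-1}-1):P\to Q$ of Proposition~\ref{prop:translation-pi-phi}, reducing Theorem~\ref{thm-pi:z-basis} to the equivalent Theorem~\ref{thm-phi:z-basis} in $\Phi_{\ap}(c)$. There the proof is a clean rank induction with no bipartite base case: if a $c$-cluster $C\subset\Phi_{\ap}(c)$ contains some $-\beta_i^c$, then every other root of $C$ has support in $I\setminus\{i\}$ (Lemma~\ref{lemma:cluster-induction}), and since $\{\beta_i^c\}$ and $\{\alpha_i\}$ differ by a unitriangular change of basis, the inductive hypothesis on the sub-root-lattice finishes the job. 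If $C$ is \emph{positive}, one applies $\tau_c^{-1}$ repeatedly; on positive roots $\tau_c^{-1}$ is literally $c^{-1}$, a linear lattice automorphism, so the $\mathbb{Z}$-basis property is preserved. Since every $\tau_c$-orbit hits a negative root, eventually some $-\beta_i^c$ appears and the previous case applies.

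Two consequences for your plan. First, your Step~1 is unnecessary: the rank induction bottoms out at $|I|\le 1$, not at a bipartite orientation. Second, your ``main obstacle'' evaporates once you iterate in the right direction and the right model. Translated back to $\Pi(c)$: if a $c$-cluster contains no $\omega_i$, then $(\tau_c^\Pi)^{-1}$ acts on it as $c^{-1}$, which is linear; you keep applying it until some $\omega_i$ appears, and at that moment you \emph{stop} and pass to lower rank. You never need to analyse the piecewise behaviour of $\tau_c^\Pi$ near $-\omega_j$, nor any of the exchange-relation machinery of Section~\ref{sec:technical-results}. The genuine work is the rank-reduction step, and that is exactly why the paper moves to $\Phi_{\ap}(c)$: the root lattice of a sub-diagram sits inside $Q$, whereas the weight lattice of a sub-diagram does not sit inside $P$, so your Step~2 as written in $\Pi(c)$ would not go through without that translation.
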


\begin{remark}
	Theorem \ref{thm-pi:z-basis} was conjectured in \cite{ca4} (Conjecture
	7.10(2)) and then proved in \cite{dwz2} (Theorem 1.7) under the assumption
	that the initial exchange matrix is skew-symmetric.
\end{remark}

Let $\mathcal{F}_c^\Pi$ be the collection of all the cones in
$P_\mathbb{R}$ that are positive linear span of simplices in the $c$-cluster
complex.

\begin{theorem}
	$\mathcal{F}_c^\Pi$ is a complete simplicial fan.
	\label{thm-pi:complete-fan}
\end{theorem}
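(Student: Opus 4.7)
The plan is to show the fan and completeness properties in parallel, adapting to the $\Pi(c)$ setting the strategy of \cite{y-system} for the bipartite case. Simpliciality is automatic from Theorem \ref{thm-pi:z-basis}, which guarantees that each $c$-cluster spans a full-dimensional simplicial cone in $P_\mathbb{R}$. What remains is (a) that any two of these cones meet in a common face, and (b) that their union is $P_\mathbb{R}$.

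I would proceed by induction on the rank $n$. The base case $n=1$ is immediate, and the bipartite case for arbitrary $n$ should be handled first as a separate base, by transporting the statement to the known realization of \cite{y-system} via the correspondence set up in Section \ref{sec:bipartite}. For the inductive step I would fix a non-bipartite $c$ and use two complementary tools. The \emph{local} tool is the mutation rule for $g$-vectors from \cite{ca4,shih}: for each $c$-cluster $C$ and each $\lambda\in C$ there is a unique $\lambda'\in\Pi(c)$ such that $(C\setminus\{\lambda\})\cup\{\lambda'\}$ is again a $c$-cluster, and the piecewise-linear $g$-vector mutation formula forces the two resulting cones to lie on opposite sides of their common facet. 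The \emph{inductive} tool is the identification $\Pi(c)\leftrightarrow\Phi_{\ap}(c)$ from Section \ref{sec:phi-ap}: when $\lambda$ is a fundamental weight $\omega_i$, removing it from a cluster corresponds to restricting to the sub-cluster algebra on $I\setminus\{i\}$, so the link of $\mathbb{R}_{\ge 0}\omega_i$ in $\mathcal{F}_c^\Pi$ is, up to the identification, a lower-rank fan $\mathcal{F}^\Pi$ known to be complete and simplicial by the inductive hypothesis.

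Combining these two ingredients gives that $\mathcal{F}_c^\Pi$ is locally a fan around every codimension-one cone. I would then globalize using the $\tau_c^\Pi$-equivariance of both $\Pi(c)$ and the $c$-compatibility degree, which propagates the local fan property from walls adjacent to the initial cluster $\{\omega_i\}_{i\in I}$ to all other walls. For completeness (b), once the fan property holds and $\mathcal{F}_c^\Pi$ has no boundary walls, a connectivity/degree argument in the style of \cite{y-system} shows that the support must be all of $P_\mathbb{R}$; alternatively, one can match the count of maximal simplices of $\Delta_c^\Pi$ from \cite{shih} against the expected number of chambers of a complete simplicial fan.

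The main obstacle I anticipate is precisely the point at which $\tau_c^\Pi$ fails to be linear, namely the rays spanned by the negative fundamental weights $-\omega_i$. In the bipartite case the symmetry $D_c$ of \cite{y-system} linearizes this behavior, but for a general $c$ one must check by hand that the cones meeting at such ``breaking'' rays glue correctly under the piecewise-linear extension of the $\tau_c^\Pi$-action. This is where the technical lemmas of Section \ref{sec:technical-results} are needed, and I expect the bulk of the real work to be concentrated there; once those lemmas are in place, the induction and gluing outlined above should close the argument without further surprises.
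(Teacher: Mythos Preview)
Your route is genuinely different from the paper's, and the soft spot is the globalization step.

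The paper does not argue locally at walls and then glue. Instead it first proves Proposition~\ref{prop-pi:unique-cluster-expansion} (equivalently Proposition~\ref{prop-phi:unique-cluster-expansion} after the linear transfer $\phi_c=(c^{-1}-1)$ of Section~\ref{sec:phi-ap}): every lattice point has a \emph{unique} $c$-cluster expansion. From this, Theorem~\ref{thm-pi:complete-fan} follows in two lines: uniqueness on $Q_\mathbb{Q}$ forbids two maximal cones from sharing an interior point, and existence shows the union contains the lattice, hence (being closed and $\mathbb{R}_+$-stable) all of $Q_\mathbb{R}$. The induction on rank and the action of $\tau_c^{-1}$ are used to prove the cluster-expansion proposition, not to patch cones together. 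No wall-crossing or degree argument is needed, and the non-linearity of $\tau_c$ is harmless because it is only applied to individual lattice points inside the positive cone, where it agrees with the linear map $c^{-1}$.

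Your proposal, by contrast, tries to establish the fan property from local data: opposite-side conditions at facets, plus inductively known links at the rays $\mathbb{R}_{\ge 0}\omega_i$. The gap is that ``locally a fan at every wall'' does not by itself prevent two non-adjacent maximal cones from overlapping; you still need a global injectivity statement. The argument you attribute to \cite{y-system} is not actually there---that paper also goes through the unique cluster expansion (its Theorem~3.11), exactly as this paper does. To make your outline rigorous you would have to invoke a covering-space/degree lemma (the cluster complex is a pseudomanifold sphere, the map to $S^{n-1}$ is a local homeomorphism, check degree $1$ at the initial chamber), and you would need the local-homeomorphism check at \emph{all} faces, not just codimension one. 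That can be done, but it is more work than the paper's route and imports topology that the cluster-expansion argument avoids entirely. Your worry about the piecewise-linear break of $\tau_c^\Pi$ at the $-\omega_i$ is well placed for your strategy, but simply does not arise in the paper's proof.
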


\begin{remark}
	This is a generalization of Theorem 1.10 in \cite{y-system}, and our proof is
	inspired by the one in that paper. In particular we will deduce the result
	from the following proposition (mimicking Theorem 3.11 in there).
\end{remark}

\begin{proposition}
	Every point $\mu$ in the weight lattice $P$ can be uniquely be written as
	\begin{equation}
		\mu=\sum_{\lambda\in\Pi(c)}m_\lambda\lambda
		\label{eqn:pi-cluster_expansion}
	\end{equation}
	where all the coefficients $m_\lambda$ are non-negative integers and
	$m_\lambda m_\nu=0$ whenever $\left( \lambda||\nu \right)_c^\Pi\neq0$
	\label{prop-pi:unique-cluster-expansion}
\end{proposition}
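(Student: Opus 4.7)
The plan is to prove this by combining Theorem \ref{thm-pi:z-basis} with an induction on the rank $|I|$, modeled on the strategy of Theorem 3.11 in \cite{y-system}. Observe first that the support condition $m_\lambda m_\nu = 0$ whenever $(\lambda||\nu)_c^\Pi \neq 0$ says exactly that the support of the expansion is a pairwise $c$-compatible subset of $\Pi(c)$, hence a face of $\Delta_c^\Pi$. Together with Theorem \ref{thm-pi:z-basis}, which guarantees that each $c$-cluster is a $\mathbb{Z}$-basis of $P$, the proposition is equivalent to the statement that the non-negative real cones generated by the $c$-clusters tile $P_\mathbb{R}$ with pairwise disjoint interiors; integrality of the coefficients then follows automatically.

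The base case is the bipartite Coxeter element $t$: as explained in Section \ref{sec:bipartite}, $\Pi(t)$ can be identified with the set $\Phi_{\geq -1}$ of almost-positive roots in a way that matches compatibility degrees, and the statement reduces verbatim to Theorem 3.11 of \cite{y-system}. For a non-bipartite acyclic $c$, I pick a source $i \in I$ of the orientation $\prec_c$, so that $s_i$ can be placed at the start of a reduced expression of $c$. Using the $c$-almost-positive root reformulation $\Phi_{\ap}(c)$ of Section \ref{sec:phi-ap}, deleting $i$ produces a smaller-rank configuration governed by a Coxeter element $\bar c$ on the subdiagram $I \setminus \{i\}$, to which the inductive hypothesis applies.

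Given $\mu \in P$, set $n_i := \langle\mu,\alpha_i^\vee\rangle$, the coefficient of $\omega_i$ in the fundamental-weight expansion of $\mu$. If $n_i \geq 0$, I would write $\mu = n_i \omega_i + \mu'$ with $\mu'$ in the span of the remaining fundamental weights; since $i$ is a source, $\omega_i$ is $c$-compatible with every weight appearing in the expansion produced by the inductive hypothesis applied to $\mu'$, yielding the desired expansion of $\mu$. If $n_i < 0$, I would instead peel off a multiple of $-\omega_{i^*} = c^{h(i;c)}\omega_i$, which also belongs to $\Pi(c)$ and plays the symmetric role in the dual reduction. The consistency of this peeling with the compatibility degree $(\bullet||\bullet)_c^\Pi$ is precisely the technical content to be established in Section \ref{sec:technical-results}. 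Uniqueness then follows from Theorem \ref{thm-pi:z-basis}: once a $c$-compatible support is fixed the coefficients are determined by the basis property, and two distinct compatible supports would determine two distinct $c$-clusters whose exchange would necessarily involve a pair of weights with non-zero compatibility degree, contradicting the hypothesis on the supports.

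The main obstacle is the reduction step itself. Unlike in \cite{y-system}, where almost-positive roots restrict cleanly when a node is deleted, in our setting weight-lattice labels do not restrict directly; one is forced to detour through $\Phi_{\ap}(c)$. The substantive technical work therefore lies in showing that this detour preserves the relevant combinatorics, in particular the compatibility degree, the action of $\tau_c^\Pi$, and the orbit structure, and is what Section \ref{sec:technical-results} is designed to handle.
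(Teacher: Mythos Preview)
Your reduction step in the case $n_i<0$ does not work. You propose to peel off a multiple of $-\omega_{i^*}$ (or, reading you charitably, $-\omega_i$) and then apply the inductive hypothesis on $I\setminus\{i\}$. But $-\omega_i$, unlike $\omega_i$, is \emph{not} $c$-compatible with every weight supported on $I\setminus\{i\}$: under $\phi_c$ it corresponds to the positive root $\beta_i^c$, whose support typically meets $I\setminus\{i\}$. Concretely, take $A_3$ with $c=s_1s_2s_3$ and $\mu=-\omega_1+\omega_2$. Your recipe writes $\mu = 1\cdot(-\omega_1)+1\cdot\omega_2$, but $(-\omega_1\,||\,\omega_2)_c^\Pi = (\beta_1^c\,||\,-\beta_2^c)_c^\Phi$ equals $[\beta_1^c;\alpha_2]_+=1\ne 0$, so this is not a $c$-cluster expansion; the correct one is the single weight $-\omega_1+\omega_2\in\Pi(c)$. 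There is no ``dual reduction'' that repairs this: the asymmetry between $\omega_i$ and $-\omega_i$ is exactly the phenomenon that forces the paper to work in $\Phi_{\ap}(c)$ rather than $\Pi(c)$. Your uniqueness argument is also circular: saying that two distinct compatible supports ``determine two distinct $c$-clusters whose exchange would involve a pair of weights with non-zero compatibility degree'' does not yield a contradiction, since the two expansions live in different clusters and nothing yet prevents the corresponding cones from overlapping.

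The paper's proof avoids both problems by passing once and for all to $\Phi_{\ap}(c)$ via $\phi_c$ and arguing there. For $\gamma\in Q$ one first strips off the uniquely determined negative part $\sum m_{-\beta_i^c}(-\beta_i^c)$ (the triangularity of $\alpha_i\mapsto\beta_i^c$ forces these coefficients), reducing to $\gamma\in Q_+$. Then, rather than peeling at a fixed source, one applies $\tau_c^{-1}$ (which acts as $c^{-1}$ on $Q_+$) repeatedly until the vector leaves $Q_+$; at that moment some $-\beta_i^c$ must appear, and one descends in rank via Proposition~\ref{prop:inductive-property}. Uniqueness is built into this recursion, since each step is forced. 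The alternative route the paper sketches, closer in spirit to what you attempt, is to transport the bipartite result through the maps $\sigma_i^{\pm1}$ relating adjacent Coxeter elements; but that is an induction on the sequence of elementary moves, not on $|I|$, and it too requires working in $\Phi_{\ap}(c)$.
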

The expression (\ref{eqn:pi-cluster_expansion}) is called the \emph{$c$-cluster
expansion} of $\mu$.

A simplicial fan is said to be \emph{polytopal} if it is the normal fan to a
simple polytope. Recall that, given a simple full-dimensional polytope $T$ in a
vector space $V$ its \emph{support function} $F$ is the piecewise linear
function on $V^*$ defined by
\begin{equation*}
	\begin{array}[]{cccl}
		F:&V^*&\longrightarrow& \mathbb{R}\\
		&\varphi&\longmapsto&\displaystyle\max\left\{ \varphi(x) | x\in T \right\}
	\end{array}
\end{equation*}
and its \emph{normal fan} is the complete simplicial fan in $V^*$ whose maximal
cones are the domains of linearity of $F$. Note that, in dimension greater than
2, not every simplicial fan  needs to be the normal fan of a polytope (see for
example section 1.5 in \cite{fulton}).

Our next goal is to show that the $c$-cluster fans we constructed so far are
polytopal.
In view of Theorem \ref{thm-pi:complete-fan}, each function defined on $\Pi(c)$
extends uniquely to a continuous, piecewise linear function on $P_\mathbb{R}$
linear on the maximal cones of $\mathcal{F}_c^\Pi$. In particular, every
function 
$$
	f:I\longrightarrow \mathbb{R}
$$
satisfying $f(i)=f(i^*)$ gives rise to a continuous, $\tau_c^\Pi$-invariant,
piecewise-linear function $F_c=F_{c;f}$, by setting 
\begin{equation*}
	F_c(c^m\omega_i):=f(i)
\end{equation*}
for all $c^m\omega_i\in \Pi(c)$, and then extending it to $P_\mathbb{R}$ as
above.

Let $\asso_c^f(W)$ be the subset of $P_\mathbb{R}^*$ defined by
\begin{equation}
	\asso_c^f(W):=
	\left\{\varphi\in P_\mathbb{R}^*\,|\, \varphi(\lambda)\le F_c(\lambda), \, \forall\lambda\in\Pi(c)  \right\}
	\,.
\end{equation}

\begin{theorem}
	If $f:I\rightarrow\mathbb{R}$ is such that
	\begin{enumerate}
		\item 
			for any $i\in I$
			$$f(i)=f(i^*)$$
			\label{thm-pi:polytopality:1}
		\item
			for any $j\in I$
			$$\sum_{i\in I}a_{ij}f(i)>0$$
			\label{thm-pi:polytopality:2}
	\end{enumerate}
	then $\asso_c^f(W)$ is a simple $n$-dimensional polytope with support
	function $F_c$.  Furthermore, the domains of linearity of $F_c$ are exactly
	the maximal cones of $\mathcal{F}_c^\Pi$, hence the normal fan of
	$\asso_c^f(W)$ is $\mathcal{F}_c^\Pi$.
	\label{thm-pi:polytopality}
\end{theorem}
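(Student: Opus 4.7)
The plan is to apply the standard polytope criterion: a complete simplicial fan is the normal fan of a simple polytope with support function a prescribed piecewise linear function (linear on each maximal cone) if and only if that function is strictly convex across every wall, in the sense that extending the linear piece from one side of the wall yields a value strictly below the function on the opposite cone. Since $\mathcal{F}_c^\Pi$ is already known to be a complete simplicial fan by Theorem \ref{thm-pi:complete-fan}, the entire theorem reduces to verifying this strict convexity for $F_c$. Hypothesis (\ref{thm-pi:polytopality:1}) enters at this step to make $F_c$ well defined and $\tau_c^\Pi$-invariant: each $\tau_c^\Pi$-orbit contains a single pair $\{\omega_i,\omega_{i^*}\}$, so the symmetry $f(i)=f(i^*)$ forces $F_c$ to be constant on each orbit.

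The next step is to convert strict convexity into a concrete family of inequalities. Each wall of $\mathcal{F}_c^\Pi$ is spanned by a codimension-one face $X \subset \Pi(c)$ that admits exactly two completions $\lambda,\lambda'$ to a $c$-cluster. The exchange relation for this pair, supplied by Theorem \ref{thm-pi:exchange-relations}, takes the form
\begin{equation*}
\lambda+\lambda' = \sum_{\mu \in X} m_\mu \mu
\end{equation*}
with non-negative integer coefficients $m_\mu$, and strict convexity at the wall becomes the scalar inequality
\begin{equation*}
F_c(\lambda)+F_c(\lambda') > \sum_{\mu \in X} m_\mu F_c(\mu).
\end{equation*}

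I would verify these inequalities through two reductions. First, by $\tau_c^\Pi$-invariance of both $F_c$ and $\mathcal{F}_c^\Pi$, it suffices to check one representative wall per $\tau_c^\Pi$-orbit, and every orbit can be represented by a wall meeting the initial chamber spanned by $\omega_1,\dots,\omega_n$. Second, when $X$ contains a weight $\nu$, the link of $\nu$ in $\Delta_c^\Pi$, made accessible by the identification $\Pi(c) \leftrightarrow \Phi_{\ap}(c)$ set up in Section \ref{sec:phi-ap}, is the cluster complex of a lower-rank Coxeter subsystem, and $F_c$ restricts to a support function coming from a function $f'$ on the smaller diagram still satisfying (\ref{thm-pi:polytopality:1}) and (\ref{thm-pi:polytopality:2}). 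Iterating drops the rank to a base case where the wall is adjacent to the initial chamber; here the exchange at $\omega_k$ takes the shape $\omega_k+\omega_k' = \sum_{j\neq k}(-a_{jk})\omega_j$, and applying $F_c$ yields exactly $2f(k) > \sum_{j\neq k}(-a_{jk})f(j)$, which is hypothesis (\ref{thm-pi:polytopality:2}).

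The main obstacle is the link-induction step: one must show that $(\bullet||\bullet)_c^\Pi$ restricted to the link of $\nu$ genuinely matches the $c$-compatibility degree of a smaller rank system, and that the restricted function $f'$ still satisfies both hypotheses. This is the very reason Section \ref{sec:phi-ap} introduces $\Phi_{\ap}(c)$ as a bridge to the root-theoretic language of \cite{y-system, associahedra}, while the lemmas of Section \ref{sec:technical-results} are designed to perform this compatibility check. Once in place, the bipartite base case is handed off to \cite{associahedra} as in Section \ref{sec:bipartite}, closing the argument for arbitrary Coxeter elements.
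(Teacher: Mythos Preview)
Your reduction to wall-crossing inequalities via the standard criterion (the paper's Lemma \ref{lemma:criterion-polytopality}) is correct and matches the paper exactly. Where you diverge is in how you propose to \emph{prove} those inequalities, and there the proposal has a genuine gap.

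The paper does not use rank induction to establish the inequalities for general $c$. Instead, Proposition \ref{prop:inequalities} transfers them from the bipartite case via the elementary-move maps $\sigma_i$: relation (\ref{eqn:F-invariance}) gives $F_{s_ics_i}(\sigma_i(\alpha)) = F_c(\alpha)$, and relation (\ref{eqn:sums-in-sums}) shows $\sigma_i$ carries the pair $\{\alpha+\gamma,\ \alpha\uplus_c\gamma\}$ to the corresponding pair for $s_ics_i$. Because the same $f$ is used for every Coxeter element along a chain of elementary moves, hypotheses (\ref{thm-pi:polytopality:1}) and (\ref{thm-pi:polytopality:2}) are trivially preserved, and the bipartite inequality (Corollary \ref{cor:bipartite-summary_2}(\ref{cor:bipartite-inequalities}), drawn from \cite{associahedra}) finishes the argument.

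Your link-induction step fails precisely at the point you flag as an obstacle: hypothesis (\ref{thm-pi:polytopality:1}) does \emph{not} restrict to subdiagrams, because the involution $i\mapsto i^*$ is determined by the longest element $w_0$ of the full group and changes under passage to a parabolic. In type $A_3$ one has $1^*=3$, so (\ref{thm-pi:polytopality:1}) reads $f(1)=f(3)$; removing node $1$ leaves an $A_2$ on $\{2,3\}$ whose longest element swaps $2$ and $3$, forcing $f(2)=f(3)$, which the original hypothesis does not imply. Nothing in Section \ref{sec:technical-results} repairs this: that section builds the maps $\sigma_i$ and proves Proposition \ref{prop:inductive-property} (compatibility degrees restrict), which is a statement about $(\bullet\|\bullet)_c$, not about $f$. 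Your final paragraph also conflates two strategies: a rank induction would terminate in low rank, not in ``the bipartite base case in arbitrary rank,'' so that sentence does not fit the argument you outlined.
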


\begin{remark}
	Theorem \ref{thm-pi:polytopality} is a generalization of Theorem 1.5 in
	\cite{associahedra}. Its proof uses the result by Chapoton, Fomin, and
	Zelevinsky as base case.
\end{remark}

The following examples illustrate the above results. We represent a point
$\varphi\in P_\mathbb{R}^*$ by a tuple $(z_i:=\varphi(\omega_i))_{i\in I}$. We
also use the standard numeration of simple roots and fundamental weights from
\cite{bourbaki}.

The construction carried on in this paper, as it will be explained in details in
Section \ref{sec:bipartite}, coincides with the one in \cite{y-system} and \cite{associahedra}
when $c$ is a bipartite Coxeter element.  Therefore the first example in which
something interesting arises is $c=s_1s_2s_3$ in type $A_3$.  In this case $\Pi(c)$
consists of two $\tau_c^\Pi$-orbits:
\begin{center}
\begin{tikzpicture}[description/.style={fill=white,inner sep=2pt}]
	\matrix (m) [matrix of math nodes, row sep=3em, column sep=2.5em, text
		height=1.5ex, text depth=0.25ex]
		{ \omega_1 & -\omega_1+\omega_2 & -\omega_2+\omega_3 & -\omega_3 & \omega_3
		& -\omega_1 \\ };
	\path[->] (m-1-1) edge node[auto] {$ \tau_c $} (m-1-2);
	\path[->]	(m-1-2) edge node[auto] {$ \tau_c $} (m-1-3);
	\path[->]	(m-1-3) edge node[auto] {$ \tau_c $} (m-1-4);
	\path[->]	(m-1-4) edge node[auto] {$ \tau_c $} (m-1-5);
	\path[->]	(m-1-5) edge node[auto] {$ \tau_c $} (m-1-6);
	\path[->] (m-1-6) edge [distance=1cm, bend left ] node[auto] {$ \tau_c $}
		(m-1-1);
\end{tikzpicture}
\end{center}
and 
\begin{center}
\begin{tikzpicture}[description/.style={fill=white,inner sep=2pt}]
	\matrix (m) [matrix of math nodes, row sep=3em, column sep=2.5em, text
		height=1.5ex, text depth=0.25ex]
		{ \omega_2 & -\omega_1+\omega_3 & -\omega_2 \\ };
	\path[->] (m-1-1) edge node[auto] {$ \tau_c $} (m-1-2);
	\path[->]	(m-1-2) edge node[auto] {$ \tau_c $} (m-1-3);
	\path[->] (m-1-3) edge [distance=1cm, bend left ] node[auto] {$ \tau_c $}
	(m-1-1);
\end{tikzpicture}
\end{center}
It is not surprising that the number of orbits and their lengths are the same as
the $A_3$ example in \cite{associahedra}: they depend only on the type of the
cluster algebra and not on the choice of a Coxeter element. Since in this case
$w_0\omega_1=-\omega_3$ we have $1^*=3$ therefore we need to impose 
$f(1)=f(3)$; condition (\ref{thm-pi:polytopality:2}) in
Theorem \ref{thm-pi:polytopality} becomes
$$
	0<f(1)<f(2)<2f(1)
$$
and the corresponding polytope $\asso_c^f(W)$ is defined by the inequalities
$$
	\max\left\{
	z_1,-z_1+z_2,-z_2+z_3,-z_3,z_3,-z_1
	\right\}\le f(1)
$$
$$
	\max\left\{ z_2,-z_1+z_3,-z_2 \right\}\le f(2)
	\,.
$$

This polytope is shown in Figure \ref{fig:A_3}. Note that, to make pictures
easier to plot and view, the angles between fundamental weights are not drawn to
scale, and each facet is labeled by the weight it is orthogonal to.

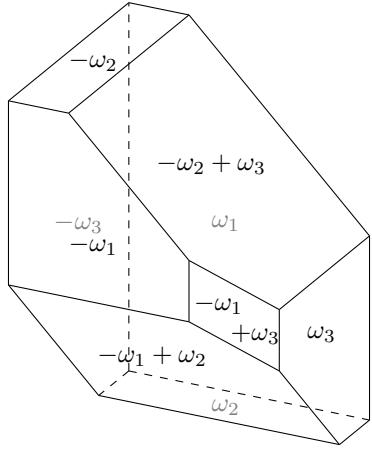
\begin{figure}[h]
	\begin{center}
		\begin{tikzpicture}[line join=round, scale=0.98]
			\draw[dashed](-.816,-1.5)--(-.816,3.5);
			\draw[dashed](-.816,-1.5)--(-1.225,-1.833);
			\draw[dashed](-.816,-1.5)--(2.449,-2.167);
			\draw[-](-.816,3.5)--(-2.449,2.167);
			\draw[-](-.816,3.5)--(0,3.333);
			\draw[-](-1.225,-1.833)--(-2.449,-.333);
			\draw[-](-1.225,-1.833)--(2.041,-2.5);
			\draw[-](-1.633,2)--(-2.449,2.167);
			\draw[-](-1.633,2)--(0,0);
			\draw[-](-1.633,2)--(0,3.333);
			\draw[-](-2.449,-.333)--(-2.449,2.167);
			\draw[-](-2.449,-.333)--(0,-.833);
			\draw[-](0,-.833)--(0,0);
			\draw[-](0,-.833)--(1.225,-1.5);
			\draw[-](0,0)--(1.225,-.667);
			\draw[-](0,3.333)--(2.449,.333);
			\draw[-](1.225,-.667)--(1.225,-1.5);
			\draw[-](1.225,-.667)--(2.449,.333);
			\draw[-](1.225,-1.5)--(2.041,-2.5);
			\draw[-](2.041,-2.5)--(2.449,-2.167);
			\draw[-](2.449,-2.167)--(2.449,.333);
			\draw[gray] (0.5,0.5) node {$\omega_1$};
			\draw[gray] (0.5,-2) node {$\omega_2$};
			\draw[gray] (-1.5,0.5) node {$-\omega_3$};
			\draw (-1.3,2.7) node {$-\omega_2$};
			\draw (1.8,-1) node {$\omega_3$};
			\draw (-1.3,0.2) node {$-\omega_1$};
			\draw (-0.5,-1.3) node {$-\omega_1+\omega_2$};
			\draw (0.3,1.3) node {$-\omega_2+\omega_3$};
			\draw (0.4,-0.6) node {$-\omega_1$};
			\draw (0.9,-1) node {$+\omega_3$};
		\end{tikzpicture}
	\end{center}
	\caption{$\asso_c^f(W)$ in type $A_3$ for $c=s_1s_2s_3$}
	\label{fig:A_3}
\end{figure}

Now let $c=s_1s_2s_3$ in type $C_3$. Then the set $\Pi(c)$ consists of three
orbits:
\begin{center}
\begin{tikzpicture}[description/.style={fill=white,inner sep=2pt}]
	\matrix (m) [matrix of math nodes, row sep=3em, column sep=2.5em, text
		height=1.5ex, text depth=0.25ex]
		{ \omega_1 & -\omega_1+\omega_2 & -\omega_2+\omega_3 & -\omega_1 \\ };
	\path[->] (m-1-1) edge node[auto] {$ \tau_c $} (m-1-2);
	\path[->]	(m-1-2) edge node[auto] {$ \tau_c $} (m-1-3);
	\path[->]	(m-1-3) edge node[auto] {$ \tau_c $} (m-1-4);
	\path[->] (m-1-4) edge [distance=1cm, bend left ] node[auto] {$ \tau_c $}
		(m-1-1);
\end{tikzpicture}
\end{center}
\begin{center}
\begin{tikzpicture}[description/.style={fill=white,inner sep=2pt}]
	\matrix (m) [matrix of math nodes, row sep=3em, column sep=2.5em, text
		height=1.5ex, text depth=0.25ex]
		{ \omega_2 & -\omega_1+\omega_3 & -\omega_1-\omega_2+\omega_3 & -\omega_2 \\ };
	\path[->] (m-1-1) edge node[auto] {$ \tau_c $} (m-1-2);
	\path[->]	(m-1-2) edge node[auto] {$ \tau_c $} (m-1-3);
	\path[->]	(m-1-3) edge node[auto] {$ \tau_c $} (m-1-4);
	\path[->] (m-1-4) edge [distance=1cm, bend left ] node[auto] {$ \tau_c $}
		(m-1-1);
\end{tikzpicture}
\end{center}
\begin{center}
\begin{tikzpicture}[description/.style={fill=white,inner sep=2pt}]
	\matrix (m) [matrix of math nodes, row sep=3em, column sep=2.5em, text
		height=1.5ex, text depth=0.25ex]
		{ \omega_3 & -2\omega_1+\omega_3 & -2\omega_2+\omega_3 & -\omega_3 \\ };
	\path[->] (m-1-1) edge node[auto] {$ \tau_c $} (m-1-2);
	\path[->]	(m-1-2) edge node[auto] {$ \tau_c $} (m-1-3);
	\path[->]	(m-1-3) edge node[auto] {$ \tau_c $} (m-1-4);
	\path[->] (m-1-4) edge [distance=1cm, bend left ] node[auto] {$ \tau_c $}
		(m-1-1);
\end{tikzpicture}
\end{center}
Condition (\ref{thm-pi:polytopality:2}) in
Theorem \ref{thm-pi:polytopality} reads
$$
	f(2)<2f(1)
$$
$$
	f(1)+f(3)<2f(2)
$$
$$
	f(2)<f(3)
$$
as in the corresponding example in \cite{associahedra}. The polytope is given by
the inequalities
$$
	\max\left\{ z_1,-z_1+z_2,-z_2+z_3,-z_1 \right\}<f(1)
$$
$$
	\max\left\{ z_2,-z_1+z_3,-z_1-z_2+z_3,-z_2 \right\}<f(2)
$$
$$
	\max\left\{ z_3,-2z_1+z_3,-2z_2+z_3,-z_3 \right\}<f(3)
$$
and it is shown in Figure \ref{fig:C_3} using the same conventions of Figure
\ref{fig:A_3}.

\begin{figure}[h]
	\begin{center}
		\begin{tikzpicture}[line join=round, scale=0.345]
			\draw[-](-.408,2.167)--(1.225,.167);
			\draw[-](-.408,2.167)--(2.858,4.833);
			\draw[-](-2.858,-1.5)--(-9.39,-.167);
			\draw[-](-2.858,-1.5)--(5.307,-4.833);
			\draw[-](-2.858,1.833)--(-2.858,-1.5);
			\draw[-](-2.858,1.833)--(-4.491,3.833);
			\draw[-](-3.674,9.5)--(2.858,4.833);
			\draw[-](-4.491,3.833)--(-.408,2.167);
			\draw[-](-5.307,9.833)--(-3.674,9.5);
			\draw[dashed](-5.307,9.833)--(-5.307,-3.5);
			\draw[dashed](-6.124,-4.167)--(-5.307,-3.5);
			\draw[-](-6.124,-4.167)--(-9.39,-.167);
			\draw[-](-7.757,6.167)--(-3.674,9.5);
			\draw[-](-7.757,6.167)--(-4.491,3.833);
			\draw[-](-9.39,-.167)--(-9.39,6.5);
			\draw[-](-9.39,6.5)--(-5.307,9.833);
			\draw[-](-9.39,6.5)--(-7.757,6.167);
			\draw[-](1.225,.167)--(-2.858,1.833);
			\draw[-](5.307,-3.167)--(1.225,.167);
			\draw[-](5.307,-4.833)--(5.307,-3.167);
			\draw[-](5.307,-4.833)--(7.757,-6.167);
			\draw[-](7.757,-4.5)--(5.307,-3.167);
			\draw[-](7.757,-4.5)--(9.39,-3.167);
			\draw[-](7.757,-6.167)--(7.757,-4.5);
			\draw[-](8.573,-7.167)--(-6.124,-4.167);
			\draw[-](8.573,-7.167)--(7.757,-6.167);
			\draw[-](9.39,-3.167)--(2.858,4.833);
			\draw[-](9.39,-3.167)--(9.39,-6.5);
			\draw[dashed](9.39,-6.5)--(-5.307,-3.5);
			\draw[-](9.39,-6.5)--(8.573,-7.167);
			\draw[gray] (-7,3) node {$-\omega_3$};
			\draw[gray] (1,-5.3) node {$\omega_2$};
			\draw[gray] (0,0) node {$\omega_1$};
			\draw (8.6,-5.3) node {$\omega_3$};
			\draw (-7,7.68) node {$-\omega_2$};
			\draw (-6.2,2) node {$-\omega_1$};
			\draw (-3,-3.3) node {$-\omega_1+\omega_2$};
			\draw (-2.8,6) node {$-2\omega_2+\omega_3$};
			\draw (4,0) node {$-\omega_2+\omega_3$};
			\draw (-.2,-1.3) node {$-2\omega_1+\omega_3$};
			\draw (6.2,-4.2) node {\tiny$-\omega_1$};
			\draw (6.5,-4.8) node {\tiny$+\omega_3$};
			\draw (-3,2.8) node {\tiny$-\omega_1$};
			\draw (-2,2.2) node {\tiny$-\omega_2$};
			\draw (-1,1.6) node {\tiny$+\omega_3$};
		\end{tikzpicture}
	\end{center}
	\caption{$\asso_c^f(W)$ in type $C_3$ for $c=s_1s_2s_3$}
	\label{fig:C_3}
\end{figure}
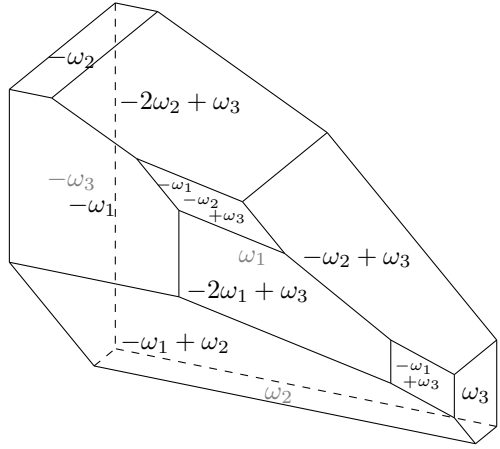

To prove the results we discussed so far will use two types of argument. The first
one is induction on the rank of $I$. Unfortunately the set $\Pi(c)$, and in
general the whole weight lattice $P$, does not behave nicely when considering
sub-diagrams of $I$. It is then convenient to introduce an auxiliary set of
labels: the \emph{$c$-almost positive roots}:
\begin{equation*}
	\Phi_{\ap}(c):=(c^{-1}-1)\Pi(c)
\end{equation*}
whose behaviour is more manageable. On the one hand the new set is related to
the old one by a linear transformation therefore any property proved for
$\Phi_{\ap}(c)$ can be transported back to $\Pi(c)$. 

On the other hand $\Phi_{\ap}(c)$ is modeled after the set $\Phi_{\ge-1}$
introduced in \cite{y-system}. It differs from the latter in several respects:
first it still consists of $g$-vectors (in an odd-looking basis) and not
denominator vectors; second it contains all the positive roots (as
$\Phi_{\ge-1}$ does) but the negative simples are replaced by other negative
roots depending on the choice of the Coxeter element $c$. However, contrary to
what happens for $\Pi(c)$, it retains a notion of subset corresponding to a
Dynkin sub-diagram. 
In order to use induction on $|I|$ it will then suffice to show that the
$c$-compatibility degree on $\Phi_{\ap}(c)$ is preserved when restricting
to a sub-diagram of $I$ (this is the content of Proposition
\ref{prop:inductive-property}).

To explain the second type of argument we need an observation on Coxeter
elements. For a given Coxeter element $c$, we call a simple reflection $s_i$
\emph{initial} (resp. \emph{final}) if $c$ admits a reduced expression of the
form $c=s_iv$ (resp. $c=vs_i$). Conjugating any Coxeter element by an initial or
final reflection produces another Coxeter element; call such a conjugation an
\emph{elementary move} and call two Coxeter elements related by a single
elementary move \emph{adjacent}.
The following is a well known fact. 
\begin{lemma}
	Any Coxeter element can be reached from any other via a sequence of elementary
	moves.
	\label{lemma:coxeter-connected}
\end{lemma}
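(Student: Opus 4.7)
The plan is to translate the statement into a question about acyclic orientations of the Dynkin diagram $I$ via the bijection (\ref{eqn:orientation-2}) and then settle it combinatorially. First I verify the dictionary: by definition $s_i$ is initial in $c$ precisely when $s_i$ precedes every adjacent $s_j$ in a reduced expression of $c$, i.e.\ $i \prec_c j$ for every edge $\{i,j\}$, which by (\ref{eqn:orientation-2}) says that every such edge is oriented $j \to i$. In orientation language this means $i$ is a sink; symmetrically, $s_i$ is final iff $i$ is a source. The elementary move $c = s_i v \mapsto s_i c s_i = v s_i$ exchanges the initial/final status of $s_i$, so on the orientation side it reverses every edge incident to $i$ (flipping a sink into a source or vice versa) while leaving all other edges untouched.

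With this translation the lemma becomes: any two acyclic orientations of $I$ are connected by a sequence of such source-or-sink flips. Since finite type Dynkin diagrams have underlying tree graphs, every orientation of $I$ is automatically acyclic, and I would prove the combinatorial statement by induction on $|I|$. The base case is trivial. For the inductive step pick a leaf $\ell$ of $I$ with unique neighbor $m$. Given two orientations $O_1, O_2$, the vertex $\ell$ has degree one and is therefore a source or a sink in any orientation; by a single flip at $\ell$ inside $O_1$ if necessary, I may assume $O_1$ and $O_2$ agree on the edge $\{\ell, m\}$. Restricting both orientations to the smaller tree $I \setminus \{\ell\}$, the inductive hypothesis yields a sequence of source-or-sink flips transforming the restriction of $O_1$ into that of $O_2$.

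The main obstacle, and the only step really worth addressing in detail, is lifting this sequence back to $I$. A flip at any vertex $v \neq m$ of $I \setminus \{\ell\}$ is automatically a legitimate source-or-sink flip in $I$ as well, because no edge incident to $v$ involves $\ell$. A flip at $m$ inside the subtree, however, may not be a source-or-sink flip in $I$, since the orientation of $\{\ell, m\}$ can spoil the status of $m$. I would compensate by pairing each inductive flip at $m$ with an auxiliary flip at $\ell$, performed either immediately before or immediately after depending on which choice aligns $\{\ell, m\}$ with the intended status of $m$. This auxiliary flip is always admissible since $\ell$ remains a source or a sink at every step, and the combined effect on the subtree is precisely the desired flip at $m$ while the edge $\{\ell, m\}$ is returned to its previous orientation. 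This bookkeeping promotes the inductive sequence to a valid sequence of elementary moves in $I$ and finishes the proof.
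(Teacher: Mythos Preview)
Your argument is correct. The translation via (\ref{eqn:orientation-2}) is accurate: for a Coxeter element every reduced expression uses each simple reflection exactly once, so only commutation moves are available, and hence $s_i$ can be pushed to the front precisely when it already precedes every non-commuting $s_j$, i.e.\ when $i$ is a sink. The leaf-removal induction is sound, and the pairing trick for lifting a flip at $m$ is exactly what is needed: since $\ell$ has degree one it is always flippable, and doing the auxiliary flip at $\ell$ on the appropriate side (before if the edge $\{\ell,m\}$ obstructs the status of $m$, after if it does not) both legalises the flip at $m$ in $I$ and restores the edge, so the invariant that $\{\ell,m\}$ stays fixed is preserved throughout.

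By contrast, the paper does not give its own proof of this lemma at all; it simply cites Theorem~3.1.4 of Geck--Pfeiffer. Your approach is therefore genuinely different in that it is self-contained and tailored to the tree structure of finite Dynkin diagrams, whereas the cited result handles the statement in the generality of arbitrary finite Coxeter groups. The advantage of your route is that it stays entirely within the orientation picture already set up in the paper and avoids an external reference; the advantage of the citation is brevity and the fact that the general result is standard. Either is perfectly adequate here.
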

A proof can be found in \cite{pfeiffer} Theorem 3.1.4.

We will construct maps $\sigma_i^{\pm1}$ relating sets of $c$-almost positive
roots for adjacent Coxeter elements.  These maps will not be linear so, a
priori, they might not preserve all the properties we are interested into.  Our
strategy will be to show that, for any Coxeter element $c$, there exists a
bipartite Coxeter element $t$ and a sequence of elementary moves relating the
two, such that all the corresponding maps $\sigma_i^{\pm1}$ preserve the desired
properties.  This will reduce our statements to the bipartite case. Our results
will then follow from another important property of the set of $c$-almost
positive roots: when the Coxeter element is bipartite, there exists bijection 
$$
	t_-:\Phi_{\ge-1}\rightarrow\Phi_{\ap}(t)
$$
which is induced by a linear map. This will allow us, in this particular case,
to deduce our results from their analogs from \cite{y-system} and \cite{shih}.

As a byproduct of the construction we get an explicit description of the
exchange relations of $\mathcal{A}_0(c)$. 
Two cluster variables $x_{\lambda,c}$ and $x_{\mu,c}$ in it are exchangeable if and only if 
$(\lambda||\mu)^\Pi_c=(\mu||\lambda)_c^\Pi=1$.
Denote by $T$ the cyclic group
generated by $\tau_c^\Pi$.  The proof of Theorem \ref{thm-pi:polytopality}
relies on the fact that, except in some degenerate cases, for any pair of
weights $\lambda$ and $\mu$ in $\Pi(c)$ corresponding to a pair of exchangeable
cluster variables, the set
\begin{equation*}
	\left\{ \tau\left( \tau^{-1}(\lambda)+\tau^{-1}(\mu) \right)
	\right\}_{\tau\in T}
\end{equation*}
consists of two vectors: $\lambda+\mu$ and another one denoted by
$\lambda\uplus_c\mu$. 

Use Theorem \ref{thm-pi:complete-fan} to label all cluster monomials in
$\mathcal{A}_0(c)$ by points of $P$:  
\begin{equation*}
	x_{\sum m_\lambda\lambda,c}:=\prod x_{\lambda,c}^{m_\lambda}.
\end{equation*}

\begin{theorem}
	All the exchange relations in $\mathcal{A}_0(c)$ are of the form
	\begin{equation*}
		x_{\lambda,c} x_{\mu,c}=x_{\lambda+\mu,c}+x_{\lambda\uplus_c\mu,c}
	\end{equation*}
	\label{thm-pi:exchange-relations}
\end{theorem}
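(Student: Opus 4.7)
The plan is to deduce the theorem from the bipartite case treated in \cite{y-system} via the elementary-move reduction that underpins the inductive framework of the rest of the paper.

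By Theorem \ref{thm-pi:complete-fan} every cluster monomial of $\mathcal{A}_0(c)$ is tagged by a point of $P$ via its $g$-vector, so any exchange relation must take the form
$$x_{\lambda,c}\,x_{\mu,c}=x_{p,c}+x_{q,c}$$
for a uniquely determined pair $\{p,q\}\subset P$; in the acyclic, coefficient-free, finite-type setting the right-hand side is binomial, with both terms being cluster monomials belonging to one of the two clusters adjacent across the flip $\lambda\leftrightarrow\mu$. The substance of the theorem is therefore the identification $\{p,q\}=\{\lambda+\mu,\lambda\uplus_c\mu\}$.

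To establish this identification I would first pass from $\Pi(c)$ to $\Phi_{\ap}(c)$ via the linear map $c^{-1}-1$ and then argue by induction along a chain of elementary moves connecting $c$ to a bipartite Coxeter element $t$, whose existence is guaranteed by Lemma \ref{lemma:coxeter-connected}. Each elementary move corresponds to a source/sink mutation of the initial seed, under which cluster monomials go to cluster monomials, exchange relations go to exchange relations, and $g$-vectors transform by the explicit piecewise-linear formulas of \cite{ca4}. Consequently, an exchange relation of the shape claimed for $c$ transports to one of the same shape for the neighboring $c'$, provided that the operation $\uplus_c$ is intertwined with $\uplus_{c'}$ by the corresponding $\sigma_i^{\pm1}$. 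Because $\uplus_c$ is defined purely in orbit-theoretic terms as the second element of $\{\tau(\tau^{-1}\lambda+\tau^{-1}\mu)\}_{\tau\in T}$, this compatibility reduces to controlling the interaction of $\sigma_i^{\pm1}$ with $\tau_c^\Pi$, a question already addressed by the technical results of Section \ref{sec:technical-results}.

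For the bipartite base case I would transport Theorem 1.12 of \cite{y-system} through the linear bijection $t_-:\Phi_{\ge-1}\to\Phi_{\ap}(t)$ recalled in the introduction: under this identification the Fomin--Zelevinsky compatibility degree becomes the $t$-compatibility degree, sums correspond to sums, and the Fomin--Zelevinsky operation $\uplus$ becomes $\uplus_t$, so the desired exchange formula is immediate.

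The main obstacle is the verification, in the inductive step, that $\uplus_c$ is transported correctly in the degenerate configurations flagged in the paragraph preceding the theorem, in which the orbit set $\{\tau(\tau^{-1}\lambda+\tau^{-1}\mu)\}_{\tau\in T}$ collapses to a single vector. In non-degenerate situations the argument is essentially formal once the intertwining of $\sigma_i^{\pm1}$ with $\tau_c^\Pi$ is in hand; in the degenerate ones I would handle the exchange by a direct rank-$2$ computation inside the sub-algebra spanned by the two sides of the flip, using the classification of rank-$2$ finite-type exchange relations, or alternatively by invoking the $\uplus_c$-bookkeeping already used in the proof of Theorem \ref{thm-pi:polytopality}.
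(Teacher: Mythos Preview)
Your proposal is correct and follows essentially the same route as the paper: translate to $\Phi_{\ap}(c)$, take the bipartite case as base (the paper cites (5.1) in \cite{ca2} rather than \cite{y-system}), and propagate along elementary moves using that $\sigma_i^{\pm1}$ carries the pair $\{\alpha+\gamma,\alpha\uplus_c\gamma\}$ to $\{\sigma_i(\alpha)+\sigma_i(\gamma),\sigma_i(\alpha)\uplus_{s_ics_i}\sigma_i(\gamma)\}$. The one point to flag is that the intertwining you appeal to is not merely the relation $\sigma_i\tau_c=\tau_{s_ics_i}\sigma_i$ from Section~\ref{sec:technical-results}, but the sharper Lemma~\ref{lemma:sigma-is-tau} (and its consequence (\ref{eqn:sums-in-sums})) in Section~\ref{sec:proof-main-results}, which handles the non-linearity of $\sigma_i$ on sums; with that in hand, the degenerate $A_1$ case is disposed of by convention rather than by a separate computation.
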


We now discuss the connection of $\mathcal{F}_c^\Pi$ with the Cambrian fan
defined in \cite{cambrian-lattices}. First recall some definitions and results
from \cite{sortable-polytopality}.

Let $D$ be the fundamental Weyl chamber, i.e., the $\mathbb{R}_+$-span of the
fundamental weights. The \emph{Coxeter fan} $\mathcal{F}$ is the complete
simplicial fan in $P_\mathbb{R}$ whose maximal cones are the images of $D$ under
the action of $W$. It is well known that the correspondence 
$$
	w\mapsto w(D)
$$
is a bijection between $W$ and the set of maximal cones of $\mathcal{F}$;
moreover $\mathcal{F}$ is the normal fan to a distinguished polytope: the
permutahedron (see e.g. \cite{postnikov}).

Using the (right) weak order,  $W$ can be regarded  as a lattice with minimal and
maximal element $e$ and $w_0$ respectively. To each lattice congruence on $W$
corresponds a fan that  coarsens $\mathcal{F}$ as shown in
\cite{fan-from-congruence}; maximal cones in the new fan are obtained glueing
together cones of $\mathcal{F}$ corresponding to elements of $W$ belonging to
the same equivalence class.

Fix a Coxeter element $c$ and one of its reduced expressions. For any subset
$J\subset I$, denote by $c_J$ the sub-word of $c$ obtained omitting the simple
reflections $\left\{ s_i \right\}_{ i\in I\setminus J}$. Let $c^\infty$ be the
formal word obtained concatenating infinitely many copies of $c$. 
Every reduced expression of $w\in W$ can be seen as a sub-word of $c^\infty$; call the
\emph{$c$-sorting word} of $w$ the lexicographically first sub-word of
$c^\infty$  realizing it.  The $c$-sorting word of $w$ can be encoded by a
sequence of subsets $I_1,I_2,\dots I_k$ of $I$ (the \emph{$c$-factorization} of
$w$) so that
\begin{equation*}
	w=c_{I_1}c_{I_2}\cdots c_{I_k}
	\,.
\end{equation*}
Note that the $c$-factorization of $w$ is independent on the reduced expression
chosen for $c$: it depends only on the Coxeter element itself.

\begin{definition}
	An element $w$ in $W$ is
	\begin{itemize}
		\item 
			\emph{$c$-sortable} if its $c$-factorization is such that
			$$
				I_1\supseteq I_2 \supseteq\cdots \supseteq I_k
			$$

		\item
			\emph{$c$-antisortable} if $w w_0$ is $c^{-1}$-sortable

	\end{itemize}
\end{definition}

As an example pick $c=s_1s_2s_3$ in type $A_3$, then $s_2s_3s_2$ is $c$-sortable
with the $c$-factorization $\left\{ 2,3 \right\},\left\{ 2\right\}$, the element
$s_2s_3s_1s_2s_1$ is $c$-antisortable while $s_2s_3s_2s_1$ is neither.

For any element $w$ in $W$, again in the weak order, 
there exist a unique minimal $c$-antisortable element above it and a unique
maximal $c$-sortable below it; denote them by $\pi_c^\uparrow(w)$ and
$\pi^c_\downarrow(w)$ respectively.

\begin{proposition}[cf. \cite{sortable-cambrian}]
	For any $w\in W$ the sets
	$$
		\left(\pi^c_\downarrow\right)^{-1}\left(\pi^c_\downarrow(w)\right)
	$$ 
	and
	$$
		\left(\pi_c^\uparrow\right)^{-1}\left(\pi_c^\uparrow(w)\right)
	$$ 
	coincide; they are intervals in the lattice $W$ with minimal element
	$\pi^c_\downarrow(w)$ and maximal element $\pi_c^\uparrow(w)$.
\end{proposition}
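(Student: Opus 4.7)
The plan is to deduce the proposition from the existence of a single lattice congruence $\Theta_c$ on $(W,\leq)$---the \emph{$c$-Cambrian congruence} of \cite{cambrian-lattices}---whose equivalence classes are intervals having a $c$-sortable element as bottom and a $c$-antisortable element as top. Once this is established, the general theory of lattice congruences on a finite lattice (cf.~\cite{fan-from-congruence}) guarantees that the map sending $w$ to the bottom (resp.\ top) of its class is a well-defined, order-preserving projection, and that the two resulting maps share the same fibers, which are precisely the $\Theta_c$-classes.

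The core of the argument is thus the pair of ``bridge'' identifications: an element of $W$ is $c$-sortable if and only if it is the minimum of its $\Theta_c$-class, and $c$-antisortable if and only if it is the maximum. I would prove the sortable half by induction on $\ell(w)$, using the recursive definition of the $c$-sorting word in parallel with a recursive description of $\Theta_c$. Letting $s_i$ be an initial reflection of $c$, there are two cases to match: if $\ell(s_iw)<\ell(w)$, then $c$-sortability of $w$ is equivalent to $(s_ics_i)$-sortability of $s_iw$, mirroring the behaviour of $\Theta_c$ under conjugation by $s_i$; if instead $s_i$ does not appear in any reduced word of $w$, then $w$ lies in the maximal parabolic $W_{I\setminus\{i\}}$ and one descends to the induced Cambrian congruence on this lower-rank Coxeter group. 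The antisortable statement then follows from the sortable one by applying the anti-automorphism $w\mapsto ww_0$, which exchanges $c$ with $c^{-1}$, $c$-sortability with $c^{-1}$-sortability, and the bottoms of $\Theta_c$-classes with the tops of $\Theta_{c^{-1}}$-classes.

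The main obstacle is the sortable bridge statement itself: one must reconcile the purely combinatorial description of $c$-sortable elements via their $c$-factorization with the algebraic description of them as bottoms of $\Theta_c$-classes. This requires a careful matching of the recursive construction of the $c$-sorting word with the generating covers of $\Theta_c$, and in particular a verification that the intermediate case in which $s_i$ appears in $w$ without being a left descent cannot produce a minimum of its class. Once this identification is in hand, the coincidence of the two fibers and their description as the interval $[\pi^c_\downarrow(w),\pi_c^\uparrow(w)]$ is immediate from the general interval structure of lattice-congruence classes on a finite lattice.
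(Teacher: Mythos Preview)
The paper does not supply a proof of this proposition at all: it is stated in the preliminaries as a quoted result from Reading's work (hence the ``cf.~\cite{sortable-cambrian}'' in the header), and is used only to justify that the relation~(\ref{eqn:sortable-equivalence}) is a lattice congruence. There is therefore nothing in the paper to compare your argument against.

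That said, your outline is a faithful sketch of how the result is actually established in Reading's papers. A few cautions if you intend to flesh it out. First, be careful about logical direction: in Reading's development the maps $\pi^c_\downarrow$ and $\pi_c^\uparrow$ are defined \emph{first} (combinatorially, via sortable and antisortable elements), and the Cambrian congruence is then \emph{defined} as the fibers of $\pi^c_\downarrow$; one must prove directly that $\pi^c_\downarrow$ is an order-preserving idempotent whose fibers are intervals, rather than invoking a pre-existing $\Theta_c$ and then identifying its bottoms with sortable elements. Your plan reverses this, which is fine in principle but then requires an independent construction of $\Theta_c$ (e.g.\ as the congruence generated by certain edge contractions in the weak order), and that construction is not lighter than the direct argument. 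Second, your inductive dichotomy on an initial $s_i$ omits the genuinely delicate case: when $s_i$ is not a left descent of $w$ but $w\notin W_{I\setminus\{i\}}$, one must show that $w$ is neither $c$-sortable nor the bottom of its class, and the latter requires knowing that $w$ is congruent to $s_iw$, which is exactly the nontrivial content of the Cambrian congruence.
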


Define a lattice congruence on $W$ by setting
\begin{equation}
	v\sim w 
	\quad \Leftrightarrow \quad
	\pi^c_\downarrow(v)=\pi^c_\downarrow(w)
	\,.
	\label{eqn:sortable-equivalence}
\end{equation}
The $c$-Cambrian fan $\mathcal{F}_c^C$ (defined in \cite{cambrian-fan}) is the
complete simplicial fan obtained from $\mathcal{F}$ by coarsening with respect
to the lattice congruence (\ref{eqn:sortable-equivalence}); its maximal cones
are parametrized by $c$-sortable elements.

In \cite{sortable-polytopality} it was shown that, for any point $a$ in the
fundamental Weyl chamber, there is a unique simple polytope $\asso_c^a(W)$ with
normal fan $\mathcal{F}_c^C$ and such that $a$ is a vertex of $\asso_c^a(W)$.

We have now all the required notations to state our last result.
\begin{theorem}
	For every $f:I\rightarrow\mathbb{R}$ satisfying the hypothesis of Theorem
	\ref{thm-pi:polytopality} there exists a point $a\in D$ such that the
	polytopes $\asso_c^a(W)$ and $\asso_c^f(W)$ coincide.
	\label{thm:polytopes-are-equal}
\end{theorem}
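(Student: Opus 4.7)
The plan is to leverage the uniqueness statement from \cite{sortable-polytopality} recalled just before the theorem: for every $a$ in the open fundamental Weyl chamber $D$ there is a unique simple polytope with normal fan $\mathcal{F}_c^C$ having $a$ as a vertex. Thus, it suffices to (i) identify $\mathcal{F}_c^\Pi$ with $\mathcal{F}_c^C$, and (ii) exhibit a distinguished vertex of $\asso_c^f(W)$ that lies in $D$; taking $a$ to be that vertex, the two polytopes $\asso_c^f(W)$ and $\asso_c^a(W)$ must then coincide.

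Step (ii) is essentially immediate from the setup. The set $\{\omega_i\}_{i\in I}\subset \Pi(c)$ is a $c$-cluster (it corresponds to the initial seed), so by Theorem \ref{thm-pi:complete-fan} its positive linear span---the fundamental chamber $D$---is a maximal cone of $\mathcal{F}_c^\Pi$. The vertex of $\asso_c^f(W)$ dual to this cone is the unique $a\in P_\mathbb{R}^*$ determined by the linear equations $a(\omega_i)=F_c(\omega_i)=f(i)$ for every $i\in I$. Using $\alpha_j=\sum_i a_{ij}\omega_i$, one has $a(\alpha_j)=\sum_i a_{ij}f(i)$, and hypothesis (\ref{thm-pi:polytopality:2}) of Theorem \ref{thm-pi:polytopality} is exactly the requirement $a(\alpha_j)>0$ for every simple root, i.e.\ $a\in D$. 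So $a$ is forced and lies in the chamber.

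Step (i) is the real work. Both $\mathcal{F}_c^\Pi$ and $\mathcal{F}_c^C$ are complete simplicial fans in $P_\mathbb{R}$ whose faces realize the cluster complex, so matching their sets of maximal cones suffices. The maximal cones of $\mathcal{F}_c^C$ are indexed by $c$-sortable elements of $W$, while those of $\mathcal{F}_c^\Pi$ are indexed by $c$-clusters in $\Pi(c)$; there is a known bijection, due to Reading and Speyer, between $c$-sortable elements and $c$-clusters. I would run the same reduction strategy used in the rest of the paper: verify the identification in the bipartite case, where the $g$-vector cones are directly related to $\Phi_{\ge-1}$ and the Cambrian fan is the well-understood one attached to a bipartite Coxeter element, and then transport the equality along a chain of elementary moves connecting $c$ to a bipartite Coxeter element by means of Lemma \ref{lemma:coxeter-connected}, controlling the effect of the maps $\sigma_i^{\pm1}$ on cones.

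The main obstacle is this step (i): showing that the nonlinear maps $\sigma_i^{\pm1}$ between $\Phi_{\ap}(c)$ and $\Phi_{\ap}(s_i c s_i)$ (translated back to $\Pi(c)$ via $(c^{-1}-1)$) correspond, cone by cone, to the effect of conjugation by $s_i$ on the Cambrian combinatorics. Once this compatibility is verified, the bipartite base case propagates, step (i) is established, and step (ii) combined with the uniqueness of the HLT polytope with prescribed vertex in $D$ closes the argument.
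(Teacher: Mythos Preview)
Your strategy is logically sound in principle but takes a harder route than the paper, and your step (i) is a genuine gap rather than a detail.

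The paper does \emph{not} first prove $\mathcal{F}_c^\Pi=\mathcal{F}_c^C$; in fact that equality is stated as a \emph{corollary} of Theorem~\ref{thm:polytopes-are-equal}, not an input to it. What the paper actually proves beforehand is only that the two fans have the same set of \emph{rays} (Proposition~\ref{prop:equal-rays}), and it does this not via the maps $\sigma_i^{\pm1}$ but via an explicit analysis of $c$-sorting words: one chooses a greedy reduced expression for $c$, shows that the $c$-sorting word of $w_0$ is the word $\mathbf{w}_{m_c}$, and verifies $c^m\omega_i=\mathbf{w}_m\omega_i$ so that the points of $\Pi(c)$ are exactly the weights $w\omega_i$ for $c$-singletons $w$. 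Once the rays match, both polytopes are visibly intersections of half-spaces indexed by $\Pi(c)$:
\[
\asso_c^a(W)=\{\varphi\mid \varphi(c^m\omega_i)\le (a,\omega_i)\},\qquad
\asso_c^f(W)=\{\varphi\mid \varphi(c^m\omega_i)\le f(i)\},
\]
and choosing $a$ by $(a,\omega_i)=f(i)$ makes the two half-space descriptions literally identical; condition~(\ref{thm-pi:polytopality:2}) is exactly $(a,\alpha_j)>0$, so $a\in D$. No uniqueness statement from \cite{sortable-polytopality} and no cone-by-cone fan comparison is needed.

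Your step (i), by contrast, asks for the full fan equality up front, and your proposed mechanism---transport along $\sigma_i^{\pm1}$ from the bipartite case---does not come with a matching operation on the Cambrian side. The maps $\sigma_i$ compare $\mathcal{F}_c^\Pi$ with $\mathcal{F}_{s_ics_i}^\Pi$, but you would also need a compatible identification of $\mathcal{F}_c^C$ with $\mathcal{F}_{s_ics_i}^C$ (something like the Cambrian recurrence of Reading--Speyer) and a check that the two transports agree. You flag this as the main obstacle, and rightly so: as written it is an outline, not a proof. The paper's facet-matching argument sidesteps the obstacle entirely by reducing everything to the equality of ray sets.
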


As a direct consequence we get
\begin{corollary}
	The $c$-Cambrian fan $\mathcal{F}_c^C$ and the $c$-cluster fan
	$\mathcal{F}_c^\Pi$ coincide.
\end{corollary}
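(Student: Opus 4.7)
The plan is to exploit the uniqueness characterization established in \cite{sortable-polytopality}: for every $a \in D$, $\asso_c^a(W)$ is the unique simple polytope with normal fan $\mathcal{F}_c^C$ admitting $a$ as a vertex. It thus suffices to identify the correct $a$ as a distinguished vertex of $\asso_c^f(W)$, and then to check that $\asso_c^f(W)$ satisfies the same facet description as $\asso_c^a(W)$---this will force the two normal fans to agree as well, giving the stated corollary. The choice of $a$ is dictated by the construction: by Theorem \ref{thm-pi:polytopality}, the cone $D = \mathbb{R}_+\{\omega_i\}_{i\in I}\in\mathcal{F}_c^\Pi$ spanned by the initial $c$-cluster is dual to a vertex $a$ of $\asso_c^f(W)$ satisfying $\langle\omega_i,a\rangle = F_c(\omega_i) = f(i)$ for each $i\in I$. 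Condition (\ref{thm-pi:polytopality:2}) of Theorem \ref{thm-pi:polytopality} reads $A^{T}f>0$ componentwise; since the inverse of a finite-type Cartan matrix has positive entries, this forces $f(i)>0$ for every $i$, placing $a$ in the interior of $D$.

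The technical core is the comparison of facet inequalities. In \cite{sortable-polytopality}, $\asso_c^a(W)$ is realized as the intersection of a subset of the half-spaces defining the generalized permutahedron $\perm^a(W)=\conv(W\cdot a)$, namely those whose outer normals are the rays of the Coxeter fan $\mathcal{F}$ that survive the coarsening into $\mathcal{F}_c^C$. Every $\lambda = c^m\omega_i\in\Pi(c)$ belongs to the $W$-orbit $W\cdot\omega_i$ and is therefore a ray of $\mathcal{F}$. To conclude the proof I would establish two claims: first, the assignment $\lambda\mapsto\{\varphi:\varphi(\lambda)\le F_c(\lambda)\}$ is a bijection between $\Pi(c)$ and the set of facet outer-normals retained in the Hohlweg--Lange--Thomas description; second, under this bijection the two half-space descriptions produce the same inequality, i.e.\ the support function of $\perm^a(W)$ along the ray through $\lambda$ equals $F_c(\lambda)$. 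The second claim reduces to verifying $\langle\lambda,a\rangle = F_c(\lambda)$ for every $\lambda\in\Pi(c)$; this holds by construction for $\lambda=\omega_i$, and propagates along each $\tau_c^\Pi$-orbit using the $\tau_c^\Pi$-invariance of $F_c$ together with the geometric action of $c$ on $P_\mathbb{R}$.

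The main obstacle is the first claim: showing that the rays of $\mathcal{F}$ surviving the sortable coarsening are precisely the elements of $\Pi(c)$. I would settle this by the same inductive strategy used throughout the first half of the paper. In the base case $c=t$ bipartite, $\Pi(t)$ is identified via the linear bijection $t_-$ with $\Phi_{\ge -1}$ of \cite{y-system}; the corresponding cluster fan agrees with the bipartite Cambrian fan, and the required match of indexing sets then follows by combining \cite{y-system}, \cite{associahedra}, and \cite{sortable-polytopality}. For a general $c$, Lemma \ref{lemma:coxeter-connected} provides a chain of elementary moves to a bipartite Coxeter element; I would verify that an elementary move $c\mapsto s_i c s_i$ transforms both $\Pi(c)$ (via the maps $\sigma_i^{\pm 1}$ introduced in Section \ref{sec:phi-ap}) and the set of $c$-sortable elements compatibly, so that the match of indexing sets is preserved at every step. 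Once both claims are established for the given $c$, the two polytopes coincide as intersections of the same half-spaces, and the theorem follows.
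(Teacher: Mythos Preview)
Your overall strategy---show the two polytopes coincide and deduce equality of their normal fans---is exactly the paper's. But the execution diverges from the paper in both claims, and one of them contains a genuine error.

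\textbf{Claim 2.} Your reduction ``$\langle\lambda,a\rangle = F_c(\lambda)$'' is wrong. The half-space in \cite{sortable-polytopality} associated to the ray through $\lambda=w\omega_i$ is $\{\varphi:\varphi(\lambda)\le (a,\omega_i)\}$; equivalently, the support function of $\perm^a(W)$ at $\lambda$ equals $(a,\omega_i)$, not $(a,\lambda)$. Your propagation argument would require $\langle c\lambda,a\rangle=\langle\lambda,a\rangle$, i.e.\ that $a$ is fixed by $c$, and a Coxeter element has no nonzero fixed vector. Fortunately, once Claim~1 is known, Claim~2 is immediate: $\lambda=c^m\omega_i\in W\omega_i$, so the right-hand side of the \cite{sortable-polytopality} inequality is $(a,\omega_i)=f(i)=F_c(\lambda)$ by your choice of $a$. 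No induction or propagation is needed.

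\textbf{Claim 1.} Here the paper takes a quite different and more self-contained route. Rather than reducing to the bipartite case via elementary moves, it works directly with the combinatorics of $c$-sorting words: it introduces a \emph{greedy} reduced expression for $c$, builds words $\mathbf{w}_m$ as certain subwords of $c^m$, proves that $\mathbf{w}_{m_c}$ is the $c$-sorting word of $w_0$, and shows $c^m\omega_i=\mathbf{w}_m\omega_i$ for $m\le h(i,c)$. Since $c$-singletons are exactly prefixes of $\mathbf{w}_{m_c}$ up to commutations, this immediately matches the rays of $\mathcal{F}_c^C$ with $\Pi(c)$. Your elementary-move approach is plausible in spirit, but it would require establishing how the set of rays of $\mathcal{F}_c^C$ transforms under $c\mapsto s_ics_i$ and that this matches the map $\psi_{s_ics_i,c}^{-1}$ on $\Pi(c)$; that compatibility is not proved anywhere in the paper and would need substantial input from the Cambrian-fan side (essentially the Reading--Speyer recurrence). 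You would also be assuming the bipartite base case $\mathcal{F}_t^\Pi=\mathcal{F}_t^C$ as an external fact. The paper's argument avoids both issues.
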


\section{The set $\Phi_{\ap}(c)$}
\label{sec:phi-ap}
Fix a Dynkin diagram $I$ and let $\Phi=\Phi_+\sqcup\Phi_-$ be the corresponding
root system.  For convenience we identify $I$ with $\left\{ 1,\dots n \right\}$
so that a chosen Coxeter element is $c=s_1\cdots s_n$.

For $i\in I$ let 
\begin{equation}
	\beta_i^c:=s_n\cdots s_{i+1}\alpha_i 
	\,.
	\label{eqn:root_basis}
\end{equation}
\begin{remark}
	It is known that the roots (\ref{eqn:root_basis}) are exactly the positive roots that are
	mapped into negative  roots by $c$; moreover they form a $\mathbb{Z}$-basis of
	the root lattice $Q$ since the linear map sending each $\alpha_i$ to
	$\beta_i^c$ is unitriangular.
	\label{rk:triangular-change-of-base}
\end{remark}

We call $\Phi_{\ap}(c):=\Phi_+\cup \left\{-\beta_i^c\right\}_{i\in I}$ the set
of the \emph{$c$-almost-positive roots} and define a bijection
$\tau_c^\Phi:\Phi_{\ap}(c)\rightarrow\Phi_{\ap}(c)$ by setting, for
$\alpha\in\Phi_{\ap}(c)$,
\begin{equation*}
	\tau^\Phi_c(\alpha):=\left\{ 
	\begin{array}{ll}
		-\beta_i^c & \mbox{if } \alpha=\beta_i^c\\
		c\alpha & \mbox{otherwise}\,.
	\end{array}
	\right.
\end{equation*}

\begin{definition}
	The \emph{$c$-compatibility degree} on $\Phi_{\ap}(c)$ is the unique
	$\tau^\Phi_c$-invariant function
	\begin{equation*}
		(\bullet||\bullet)^\Phi_c:\Phi_{\ap}(c)\times\Phi_{\ap}(c)\longrightarrow\mathbb{N}
	\end{equation*}
  defined by the initial conditions
	\begin{equation*}
		(-\beta_i^c||\alpha)^\Phi_c:=\left[ \alpha;\alpha_i \right]_+
		\,.
	\end{equation*}
\end{definition}

These definitions are justified by the following proposition.
\begin{proposition}
  The	linear map 
	\begin{equation*}
		\phi_c:=(c^{-1}-1):P_\mathbb{R}\longrightarrow Q_\mathbb{R}
	\end{equation*}
	is invertible and restricts to an isomorphism of the weight lattice $P$ with
	the root lattice $Q$ sending $\Pi(c)$ to $\Phi_{\ap}(c)$.
	Moreover $\phi_c$ intertwines $\tau_c^\Pi$ and $\tau_c^\Phi$ and
	transform the compatibility degree $(\bullet||\bullet)_c^\Pi$ on $\Pi(c)$
	into the compatibility degree $(\bullet||\bullet)_c^\Phi$ on $\Phi_{\ap}(c)$.
  \label{prop:translation-pi-phi}
\end{proposition}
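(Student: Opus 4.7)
The plan is to verify each clause of the proposition in turn: invertibility of $\phi_c$, its restriction to a lattice isomorphism $P\cong Q$, the bijection $\Pi(c)\leftrightarrow\Phi_{\ap}(c)$, the intertwining of $\tau_c^\Pi$ with $\tau_c^\Phi$, and the correspondence of the two compatibility degrees.

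For invertibility I would use the standard fact that a Coxeter element of a finite Weyl group has eigenvalues $e^{2\pi\sqrt{-1}\,m_j/h}$ with exponents $0<m_j<h$, so $1$ is not an eigenvalue of $c$ and $c^{-1}-1$ is nonsingular. To evaluate $\phi_c(\omega_i)$ I would apply $c^{-1}=s_n s_{n-1}\cdots s_1$ to $\omega_i$ from right to left using $s_j\omega_k=\omega_k-\delta_{jk}\alpha_k$: the factors $s_1,\ldots,s_{i-1}$ fix $\omega_i$, then $s_i$ subtracts $\alpha_i$, and the remaining $s_n\cdots s_{i+1}$ act only on this $-\alpha_i$, yielding $c^{-1}\omega_i=\omega_i-\beta_i^c$ and therefore $\phi_c(\omega_i)=-\beta_i^c$. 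In view of Remark \ref{rk:triangular-change-of-base}, this promotes $\phi_c$ to a lattice isomorphism $P\to Q$.

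Because $\phi_c=c^{-1}-1$ commutes with $c$, one has $\phi_c(c^m\omega_i)=-c^m\beta_i^c$; in particular the endpoint $-\omega_{i^*}=c^{h(i;c)}\omega_i$ is carried to $\beta_{i^*}^c$, so every $\tau_c^\Pi$-orbit on $\Pi(c)$ is sent onto an orbit in $Q$ running from some $-\beta_i^c$ to the matching $\beta_{i^*}^c$. Since $|\Pi(c)|=|\Phi_{\ap}(c)|$ (both count the cluster variables of $\mathcal{A}_0(c)$) and $\phi_c$ is injective, establishing $\phi_c(\Pi(c))=\Phi_{\ap}(c)$ reduces to showing that each intermediate vector $-c^m\beta_i^c$ with $1\le m<h(i;c)$ lies in $\Phi_+$. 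By injectivity it cannot equal another $-\beta_j^c$ (that would force $c^m\omega_i=\omega_j$, impossible since fundamental weights appear in $\Pi(c)$ only at $m=0$), so the only thing left to exclude is its being some other negative root. This is the main technical obstacle and rests on the $c$-orbit analysis of roots provided by \cite{shih}.

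The last two clauses are then formal. The elements of $\Pi(c)$ on which $\tau_c^\Pi$ departs from multiplication by $c$ are exactly the $-\omega_k$, and the bijection just obtained pairs them precisely with the elements $\beta_k^c$ on which $\tau_c^\Phi$ departs from $c$; the check $\tau_c^\Phi(\beta_k^c)=-\beta_k^c=\phi_c(\omega_k)=\phi_c(\tau_c^\Pi(-\omega_k))$ is immediate, and on every other element both permutations act by $c$, which commutes with $\phi_c$. For the compatibility degrees, the defining initial condition $(\omega_i||\lambda)_c^\Pi=[\phi_c(\lambda);\alpha_i]_+$ coincides with $(-\beta_i^c||\phi_c(\lambda))_c^\Phi=[\phi_c(\lambda);\alpha_i]_+$ once $\omega_i$ is translated to $-\beta_i^c$; since each compatibility degree is the unique $\tau$-invariant extension of its initial data and the two $\tau$'s are intertwined by $\phi_c$, the pushforward of one function under $\phi_c$ is the other.
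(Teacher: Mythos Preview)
Your proposal is correct and follows essentially the same route as the paper: compute $\phi_c(\omega_i)=-\beta_i^c$, use this together with Remark~\ref{rk:triangular-change-of-base} to get the lattice isomorphism, invoke \cite{shih} for the positivity of the intermediate images $\phi_c(c^m\omega_i)$, and then check the intertwining and the initial conditions for the compatibility degrees case by case. The only cosmetic differences are that the paper does not argue invertibility via eigenvalues (it is already implied by the basis-to-basis computation), and it establishes $|\Pi(c)|=|\Phi_{\ap}(c)|$ by the explicit count $\sum_i(h(i;c)+1)=\tfrac{n(h+2)}{2}$ rather than by appealing to the common parametrization by cluster variables.
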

\begin{proof}
	To show that $\phi_c$ is a lattice isomorphism, in view of Remark
	\ref{rk:triangular-change-of-base}, it suffices to establish that  
	\begin{equation*}
		\phi_c(\omega_i)=-\beta_i^c
		\,.
	\end{equation*}
	Using the well-known property
	\begin{equation*}
		s_i\omega_j=\left\{
		\begin{array}{ll}
			\omega_i-\alpha_i & \mbox{if } i=j\\
			\omega_j & \mbox{otherwise,}
		\end{array}
		\right.
	\end{equation*}
	we have
	$$
  	\phi_c(\omega_i)=s_n\cdots s_1\omega_i-\omega_i=s_n\cdots
  	s_{i+1}(s_i\omega_i-\omega_i)=s_n\cdots s_{i+1}(-\alpha_i)=-\beta_i^c
		.
	$$

	The sets $\Pi(c)$ and $\Phi_{\ap}(c)$ have the same cardinality.  Indeed
	Proposition 1.7 in \cite{shih} states that, for every $i$, the sum
	$h(i,c)+h(i^*,c)$ is equal to the Coxeter number $h$, hence
	$$
		|\Pi(c)|=
		\sum_{i\in I}(h(i,c)+1)=
		\frac{1}{2}\sum_{i\in I}(h(i,c)+h(i^*,c)+2)=
		\frac{1}{2}\sum_{i\in I}(h+2)=
		|\Phi_{\ap}(c)|
		\,.
	$$

	To conclude the proof of the first part it suffices to check that any weight
	in $\Pi(c)\setminus\left\{ \omega_i \right\}_{i\in I}$ is mapped to a positive
	root. This was already showed in \cite{shih} during the proof of the
	inequalities (1.8) in it.

	To show that, for any $\alpha\in\Phi_{\ap}(c)$, 
	$$
		\phi_c^{-1}\left(\tau^\Phi_c(\alpha)\right)=
		\tau_c^\Pi\left(\phi_c^{-1}(\alpha)\right)
	$$
	there are two cases to consider:
	\begin{enumerate}
		\item
			if $\alpha=\beta_i^c$ then 
			\begin{align*}
  			\phi_c^{-1}\left(\tau^\Phi_c(\beta_i^c)\right)=
  			\phi_c^{-1}\left(-\beta_i^c\right)=
				\omega_i=
  			\tau_c^\Pi(-\omega_{i})=
  			\tau_c^\Pi\left(\phi_c^{-1}\left(\beta_i^c\right) \right)
			\end{align*}

		\item
			if $\alpha\neq\beta_i^c$ for any $i$ then
			$$
  			\phi_c^{-1}\left(\tau^\Phi_c(\alpha)\right)=
  			\phi_c^{-1}\left(c\alpha\right)=
  			(c^{-1}-1)^{-1}c\alpha=
  			c(c^{-1}-1)^{-1}\alpha=
  			\tau_c^\Pi\left(\phi_c^{-1}\left(\alpha\right)\right)
				\,.
  		$$
	\end{enumerate}
	To conclude the proof it is sufficient to show that both compatibility degrees
	satisfy the same initial conditions. On the one hand we have 
  $$
  	(-\beta_i^c||\alpha)^\Phi_c=[\alpha;\alpha_i]_+
  $$
  and on the other
  \begin{align*}
    \left(\phi_c^{-1} \left(-\beta_i^c\right)||\phi_c^{-1}\left( \alpha\right)\right)_c^\Pi=
  	\left( \omega_i||\phi_c^{-1}(\alpha)\right)_c^\Pi=
  	\left[(c^{-1}-1)(c^{-1}-1)^{-1} \alpha;\alpha_i \right]_+
		\,.
  \end{align*}

\end{proof}

\begin{remark}
	As in the case of $\Pi(c)$ the action of $\tau_c^\Phi$ on $\Phi_{\ap}(c)$ and
	the action of $w_0$ on $I$ are compatible, i.e. there exist $m\in\mathbb{Z}$
	such that  
	$$
		\left(  \tau_c^\Phi\right)^m(-\beta_i^c)=-\beta_j^c 
	$$
	if and only if	$j=i$ or $j=i^*$.
	\label{rk:wheels}
\end{remark}

We can now rephrase Theorems \ref{thm-pi:z-basis}, \ref{thm-pi:complete-fan}, 
\ref{thm-pi:polytopality}, \ref{thm-pi:exchange-relations}, and Proposition
\ref{prop-pi:unique-cluster-expansion} in this new setup. 

Let $\Delta_c^\Phi$ be the abstract simplicial complex having elements of
$\Phi_{\ap}(c)$ as vertices and with subsets of pairwise compatible roots as
simplices; similarly to the case of $\Pi(c)$, we call \emph{$c$-clusters} the
maximal (by inclusion) simplices.

In view of Proposition \ref{prop:translation-pi-phi}, Theorem
\ref{thm-pi:z-basis} is equivalent to the following.

\begin{theorem}
	Each $c$-cluster in $\Delta_c^\Phi$ is a $\mathbb{Z}$-basis of the root
	lattice $Q$. 
	\label{thm-phi:z-basis}
\end{theorem}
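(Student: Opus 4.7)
The plan is to prove Theorem \ref{thm-phi:z-basis} by a two-step reduction, running induction on the rank $|I|$ with the bipartite case serving as the base and elementary moves bridging from there to arbitrary Coxeter elements.

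For bipartite $c=t$, I would invoke the linear bijection $t_-:\Phi_{\ge-1}\to\Phi_{\ap}(t)$ advertised at the end of Section \ref{sec:preliminaries}. Since this bijection is induced by a $\mathbb{Z}$-linear automorphism of $Q$ and, by construction, intertwines the two compatibility structures, it carries the clusters of $\Phi_{\ge-1}$ bijectively onto the $t$-clusters in $\Phi_{\ap}(t)$. The statement then reduces to the classical result of \cite{y-system} asserting that each cluster in $\Phi_{\ge-1}$ is a $\mathbb{Z}$-basis of $Q$.

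For general $c$ I would proceed in two stages. First, I would reduce any $c$-cluster $C$ to one that meets $\{-\beta_i^c\}_{i\in I}$: by Remark \ref{rk:wheels} every $\tau_c^\Phi$-orbit contains such a negative root, and because $(\tau_c^\Phi)^{-1}$ agrees with the unimodular linear map $c^{-1}$ on every element that is not itself of the form $-\beta_j^c$, some iterate $(\tau_c^\Phi)^{-k}(C)$ coincides with the linear image $c^{-k}(C)$ and contains a negative root; the cluster property is preserved by $\tau_c^\Phi$-invariance of the compatibility degree, and the $\mathbb{Z}$-basis property is preserved throughout because each intermediate step is unimodular. Once $C$ contains some $-\beta_i^c$, the initial condition $(-\beta_i^c||\alpha)_c^\Phi=[\alpha;\alpha_i]_+$ forces every other element of $C$ to have non-positive $\alpha_i$-coefficient; in particular every positive root in $C\setminus\{-\beta_i^c\}$ lies in the sub-root-system $\Phi_{I\setminus\{i\}}$. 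Invoking Proposition \ref{prop:inductive-property}, I would identify $C\setminus\{-\beta_i^c\}$ with a $c'$-cluster in $\Phi_{\ap}(c')$ for an appropriate Coxeter element $c'$ of $W(I\setminus\{i\})$ and conclude by the inductive hypothesis on $|I|$ that it is a $\mathbb{Z}$-basis of the sub-lattice $Q'$. Since $\beta_i^c=\alpha_i+(\text{element of }Q')$, appending $-\beta_i^c$ promotes this to a $\mathbb{Z}$-basis of $Q=Q'\oplus\mathbb{Z}\alpha_i$.

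The delicate point is to ensure that Proposition \ref{prop:inductive-property} actually applies for the index $i$ at hand: the restriction to a sub-diagram matches $\Phi_{\ap}(c')$ cleanly only when $i$ sits in a distinguished position relative to $c$, typically as an initial or final reflection. To handle an arbitrary $i$ I expect to need Lemma \ref{lemma:coxeter-connected} combined with the maps $\sigma_i^{\pm1}$ between $\Phi_{\ap}$-sets of adjacent Coxeter elements, selecting a sequence of elementary moves that brings $i$ into an initial position while transporting $C$ along compatible unimodular steps. Checking that such a path can always be chosen, so that each $\sigma_i^{\pm1}$ restricts to a unimodular linear map on the transported cluster despite being non-linear on $\Phi_{\ap}$ as a whole, is the main obstacle; this is precisely the mechanism being set up in Section \ref{sec:bipartite} and prepared for by the technical results of Section \ref{sec:technical-results}.
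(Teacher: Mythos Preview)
Your core strategy---apply $(\tau_c^\Phi)^{-1}$ to a positive cluster until a negative root $-\beta_i^c$ appears, then strip it off and induct on $|I|$---is exactly the paper's proof, and your unimodularity and support observations are correct. The final paragraph, however, manufactures a difficulty that is not there, and in doing so obscures that the argument is already complete without any appeal to elementary moves or to a bipartite base case.

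Proposition~\ref{prop:inductive-property} is stated (and proved in Section~\ref{sec:technical-results}) for an \emph{arbitrary} sub-diagram $J\subset I$; it does not require the removed vertex to be initial or final in $c$. You may be conflating the statement with its proof: the latter does route through elementary moves and the bipartite case, but once the proposition is established it is available as a black box for any $i$. Lemma~\ref{lemma:cluster-induction} in the paper packages exactly this reduction.

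Nor is the non-linearity of $\iota$ an obstacle. You have arranged to stop the iteration at the \emph{first} step where some root becomes negative; at that moment every other element of the cluster is still positive. On positive roots $\iota$ is the identity, so $C\setminus\{-\beta_i^c\}$ literally sits inside $\Phi^J_+\subset\Phi^J_{\ap}(c_J)$ as a $c_J$-cluster, and the inductive hypothesis applies to it directly as a subset of $Q_J$. Your own observation $\beta_i^c\in\alpha_i+Q_J$ then finishes the $\mathbb{Z}$-basis step. (If several roots become negative simultaneously, remove them all at once; the same unitriangularity of $\{\beta_i^c\}$ over $\{\alpha_i\}$ shows the removed roots together with any $\mathbb{Z}$-basis of $Q_J$ give a $\mathbb{Z}$-basis of $Q$.) The paper's proof never invokes $\sigma_i^{\pm1}$ for this theorem, and rank~$1$ is the only base case needed; your separate bipartite paragraph can be dropped.
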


\begin{definition}
	For any $\gamma$ in $Q$ we call a \emph{$c$-cluster expansion} of $\gamma$ an
	expression
	\begin{equation*}
		\gamma=\sum_{\alpha\in\Phi_{\ap}(c)}m_\alpha\alpha
	\end{equation*}
	where all the coefficients $m_\alpha$ are nonnegative integers such that $m_\alpha
	m_\delta=0$ whenever $\left( \alpha||\delta \right)^\Phi_c\neq0$.
\end{definition}

The counterpart of Proposition \ref{prop-pi:unique-cluster-expansion} is the
following:
\begin{proposition}
	Any $\gamma$ in the root lattice $Q$ admits a unique $c$-cluster expansion.
	\label{prop-phi:unique-cluster-expansion}
\end{proposition}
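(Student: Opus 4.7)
Via Proposition \ref{prop:translation-pi-phi}, it suffices to work with $\Phi_{\ap}(c)$, since $\phi_c$ is a lattice isomorphism preserving both the labeling sets and the compatibility degrees. The $\Phi$-side is preferable because, unlike $\Pi(c)$, it admits a natural restriction to Dynkin sub-diagrams, enabling induction on $|I|$.

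The strategy has two layers. Within a fixed rank, one reduces from an arbitrary Coxeter element to a bipartite one. In the bipartite case $c=t$, Section \ref{sec:bipartite} will supply a linear bijection $t_-\colon\Phi_{\ge-1}\to\Phi_{\ap}(t)$ that intertwines the compatibility degrees and preserves the root lattice $Q$; because $t_-$ is linear over $\mathbb{Z}$, $t$-cluster expansions transport bijectively and the claim is a direct restatement of Theorem 3.11 of \cite{y-system}. For general $c$, Lemma \ref{lemma:coxeter-connected} gives a sequence of elementary moves connecting $c$ to a bipartite $t$, and at each move $c\mapsto s_i c s_i$ the bijection $\sigma_i^{\pm1}$ between the corresponding sets of almost-positive roots is used to transport existence and uniqueness of cluster expansions back to the already-established bipartite case.

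Within a fixed Coxeter element, the rank induction handles non-generic points: if $\gamma\in Q$ lies in the root sub-lattice $Q_J$ of a proper Dynkin sub-diagram $J\subsetneq I$, then the promised Proposition \ref{prop:inductive-property} guarantees that the $c$-compatibility degree restricts correctly to $\Phi_{\ap}(c_J)\subset\Phi_{\ap}(c)$, so the induction hypothesis supplies the unique $c_J$-cluster expansion, which is automatically a valid $c$-cluster expansion of $\gamma$. This confines the genuinely new work to points that truly use the full rank, where the bipartite reduction above takes over.

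The main obstacle, I expect, will be the non-linearity of the maps $\sigma_i^{\pm1}$. Unlike the bipartite transport $t_-$, which preserves integer combinations automatically, each $\sigma_i$ differs from a linear reflection on one specific $\tau_c^\Phi$-orbit, and so could in principle destroy either the non-negativity of the coefficients or the disjoint-incompatible-support condition. The resolution is to combine Theorem \ref{thm-phi:z-basis}, which makes coefficients well-defined once a cluster has been chosen, with an orbit-by-orbit analysis of $\sigma_i$ developed in Section \ref{sec:technical-results}, choosing the reduction sequence so that at each elementary move the expansion's support remains inside a single cluster and is thus transferred by a genuine change of $\mathbb{Z}$-basis.
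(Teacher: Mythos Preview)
Your approach is precisely the ``alternative strategy'' the paper sketches in the remark immediately following its proof, but the paper's actual argument is different and more self-contained. The paper works with a \emph{single} Coxeter element $c$ throughout: it first peels off the negative part of $\gamma$ by writing $\gamma=-\sum m_{-\beta_i^c}\beta_i^c+\gamma_+$ with $\gamma_+\in Q_+$ (the coefficients $m_{-\beta_i^c}$ are forced, since $-\beta_i^c$ is the unique element of $\Phi_{\ap}(c)$ with negative $\alpha_i$-coordinate and everything compatible with it has zero $\alpha_i$-coordinate), reduces to $\gamma\in Q_+$, and then iterates $\tau_c^{-1}=c^{-1}$ until the point leaves $Q_+$; at that stage a negative root is forced into any expansion and rank induction via Proposition~\ref{prop:inductive-property} finishes. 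No transport between different Coxeter elements is needed.

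Your proposal, by contrast, has a real gap at the point you yourself flag. You say you will ``choose the reduction sequence so that at each elementary move the expansion's support remains inside a single cluster''; but for \emph{uniqueness} there are two hypothetical expansions, supported in two different clusters $C$ and $C'$, and there is no single expansion to track. Since $\sigma_1$ agrees with $s_1$ only away from $-\beta_1^c$, the two images $\sum m_\alpha\sigma_1(\alpha)$ and $\sum m'_\alpha\sigma_1(\alpha)$ need not even be the same lattice point, so you cannot invoke uniqueness for $s_1cs_1$. (And you cannot appeal to a piecewise-linear extension of $\sigma_1$ over the fan, since the fan property is exactly what you are proving.) The fix is to observe, as in the paper's ``positive part'' step, that the coefficient of $-\beta_1^c$ in \emph{any} $c$-cluster expansion of $\gamma$ is forced to equal $[-\gamma;\alpha_1]_+$; once that is stripped off, $\sigma_1$ does act linearly on the remainder and the transport goes through. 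Your separate rank-induction clause (``if $\gamma\in Q_J$'') is too narrow to substitute for this step, since a generic $\gamma$ lies in no proper $Q_J$.
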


\begin{remark}
	Our proof of Proposition \ref{prop-phi:unique-cluster-expansion} will mimic, step by
	step, the proof of Theorem 3.11 in \cite{associahedra}.  A sketch of a
	different proof, more similar to the others in this paper, will be also given.
\end{remark}

Let $\mathcal{F}_c^\Phi$ be the set of all the cones in the space $Q_\mathbb{R}$
that are the positive linear span of simplices of the complex $\Delta_c^\Phi$.
A direct consequence of Proposition \ref{prop-phi:unique-cluster-expansion} is
the following counterpart of Theorem \ref{thm-pi:complete-fan}.

\begin{theorem}
	$\mathcal{F}_c^\Phi$ is a complete simplicial fan. 	
	\label{thm-phi:complete-fan}
\end{theorem}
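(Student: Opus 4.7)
The plan is to derive Theorem \ref{thm-phi:complete-fan} from Proposition \ref{prop-phi:unique-cluster-expansion} together with Theorem \ref{thm-phi:z-basis}, following the strategy used for Theorem 1.10 in \cite{y-system}. By Theorem \ref{thm-phi:z-basis} each $c$-cluster $C$ is a $\mathbb{Z}$-basis of $Q$, so each maximal cone $\mathbb{R}_{\geq 0} C$ of $\mathcal{F}_c^\Phi$ is a full-dimensional simplicial cone, and the collection $\mathcal{F}_c^\Phi$ is closed under taking faces by the construction of $\Delta_c^\Phi$.

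To establish completeness I would first extend the unique $c$-cluster expansion from $Q$ to all of $Q_\mathbb{R}$ in two steps. For $\gamma \in Q \otimes \mathbb{Q}$, choose $N \in \mathbb{Z}_{>0}$ with $N\gamma \in Q$, apply Proposition \ref{prop-phi:unique-cluster-expansion} to $N\gamma$, and divide by $N$; clearing denominators shows that the resulting rational cluster expansion is unique. For arbitrary $\gamma \in Q_\mathbb{R}$ approximate by a sequence $\gamma_k$ in $Q \otimes \mathbb{Q}$; since $\Delta_c^\Phi$ has only finitely many maximal simplices, a subsequence of cluster expansions is supported inside a single $c$-cluster $C$, and passing to the limit inside the closed cone $\mathbb{R}_{\geq 0} C$ places $\gamma$ in $\mathbb{R}_{\geq 0} C$. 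Thus the cones of $\mathcal{F}_c^\Phi$ cover $Q_\mathbb{R}$.

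The final step is to verify that whenever two maximal cones $\mathbb{R}_{\geq 0} C_1$ and $\mathbb{R}_{\geq 0} C_2$ meet, their intersection is spanned by $C_1 \cap C_2$ and is therefore a common face of both. For any lattice point in this intersection the two clusters $C_1$ and $C_2$ each provide a cluster expansion, and uniqueness at the lattice level (Proposition \ref{prop-phi:unique-cluster-expansion}) forces the two expansions to coincide and hence to be supported in $C_1 \cap C_2$. Approximating an arbitrary point of the intersection by such lattice points, via the rational extension described above and the closedness of the cones involved, then places it inside $\mathbb{R}_{\geq 0}(C_1 \cap C_2)$. The main obstacle I anticipate is precisely this passage from $Q$ to $Q_\mathbb{R}$ on the boundary between adjacent cones: all the combinatorial rigidity comes from Proposition \ref{prop-phi:unique-cluster-expansion}, so the work is to arrange the approximations cleanly, exploiting the finiteness of the set of $c$-clusters and the simpliciality supplied by Theorem \ref{thm-phi:z-basis}.
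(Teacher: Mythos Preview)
Your proposal is correct and follows essentially the same route as the paper: both derive Theorem~\ref{thm-phi:complete-fan} from Proposition~\ref{prop-phi:unique-cluster-expansion} by passing from the lattice $Q$ to $Q_{\mathbb{R}}$ via density of rationals and clearing denominators. The paper is somewhat terser---it checks only that the maximal cones have pairwise disjoint interiors (a rational point in two interiors would, after clearing denominators, give a lattice point with two distinct $c$-cluster expansions) and that their union is all of $Q_{\mathbb{R}}$ (the union contains $Q$, is closed, and is $\mathbb{R}_{+}$-stable)---whereas you spell out the ``intersection is a common face'' condition explicitly; but this is a difference of presentation, not of strategy.
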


As for the case of $\Pi(c)$, once Theorem \ref{thm-phi:complete-fan} is
established, any function defined on $\Phi_{\ap}(c)$ can be extended to a
continuous, piecewise linear function on $Q_\mathbb{R}$ that is linear on the 
maximal cones of $\mathcal{F}_c^\Phi$. In particular, any function 
$$
	f:I\longrightarrow\mathbb{R}
$$
such that $f(i)=f(i^*)$ gives rise to a $\tau_c^\Phi$-invariant, continuous,
piecewise-linear function 
$$
	F_c=F_{c;f}:Q_\mathbb{R}\longrightarrow\mathbb{R}
$$
by setting
\begin{equation*}
	F_c(-\beta_i^c):=f(i)
\end{equation*}
and extending, first to $\Phi_{\ap}(c)$ and then to $Q_\mathbb{R}$, as
prescribed.

Let $\asso_c^{f,\Phi}(W)$ be the subset of $Q_\mathbb{R}^*$ defined by
\begin{equation}
	\asso_c^{f,\Phi}(W):=
	\left\{ \varphi\in Q_\mathbb{R}^* \mid \varphi(\alpha)\le F_c(\alpha)\quad\forall
	\alpha\in\Phi_{\ap}(c)\right\}
	\,.
\end{equation}
\begin{theorem}
	If $f:I\rightarrow\mathbb{R}$ is such that
	\begin{enumerate}
		\item 
			for any $i\in I$
			$$
				f(i)=f(i^*)
			$$
		\item
			for any $j\in J$
			$$
				\sum_{i\in I}a_{ij}f(i)>0
			$$
	\end{enumerate}
	then $\asso_c^{f,\Phi}(W)$ is a simple $n$-dimensional polytope with support
	function $F_c$.  Furthermore, the domains of linearity of $F_c$ are exactly
	the maximal cones of $\mathcal{F}_c^\Phi$, hence the normal fan of
	$\asso_c^{f,\Phi}(W)$ is $\mathcal{F}_c^\Phi$.
	\label{thm-phi:polytopality}
\end{theorem}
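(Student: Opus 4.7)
The plan is to prove Theorem~\ref{thm-phi:polytopality} directly on the $\Phi$ side---where $\Phi_{\ap}(c)$ restricts cleanly to Dynkin sub-diagrams---and then transport the conclusion to Theorem~\ref{thm-pi:polytopality} through the lattice isomorphism $\phi_c$ of Proposition~\ref{prop:translation-pi-phi}. The overall strategy is a double reduction: induction on $|I|$ via restriction to Dynkin sub-diagrams, combined with a path of elementary moves (Lemma~\ref{lemma:coxeter-connected}) from $c$ down to a bipartite Coxeter element $t$, where the Chapoton--Fomin--Zelevinsky realization of \cite{associahedra} furnishes the base case.

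First I would set up candidate vertices. By Theorem~\ref{thm-phi:complete-fan} the maximal cones of $\mathcal{F}_c^\Phi$ are the positive linear spans of $c$-clusters $C \subset \Phi_{\ap}(c)$, and by Theorem~\ref{thm-phi:z-basis} each such $C$ is a $\mathbb{Z}$-basis of $Q$; hence there is a unique $\varphi_C \in Q_\mathbb{R}^*$ with $\varphi_C(\alpha) = F_c(\alpha)$ for all $\alpha \in C$. The theorem then reduces to two claims: (i) $\varphi_C(\alpha) \le F_c(\alpha)$ for every $\alpha \in \Phi_{\ap}(c)$ and every $c$-cluster $C$; and (ii) for adjacent clusters $C, C'$ sharing an $(n-1)$-dimensional wall, the difference $\varphi_{C'} - \varphi_C$ is a positive scalar multiple of a functional dual to the shared wall. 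Together, (i) and (ii) imply that $\asso_c^{f,\Phi}(W)$ is a simple $n$-polytope with vertex set $\{\varphi_C\}$, facets labeled by $\Phi_{\ap}(c)$, normal fan $\mathcal{F}_c^\Phi$, and support function $F_c$.

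The verification of (i) and (ii) I would split along the usual two axes. For each $j \in I$, the vertex figure at $-\beta_j^c$ can be identified, via the inductive property of the $c$-compatibility degree to be established in Section~\ref{sec:technical-results}, with a lower-rank $c$-associahedron supported on the sub-root-system indexed by $I \setminus \{j\}$; by induction on $|I|$ this reduces all inequalities in (i) not incident to the $-\beta_j^c$ facets to a known case. At the remaining fundamental-weight facets the two hypotheses on $f$ play their respective roles: the positivity $\sum_i a_{ij} f(i) > 0$ fixes the sign of the gradient change that opens the required facet, while $f(i) = f(i^*)$ supplies the $\tau_c^\Phi$-invariance needed to propagate the check around the orbits of the fundamental weights. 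To reduce the Coxeter element itself, I would invoke Lemma~\ref{lemma:coxeter-connected} to connect $c$ to a bipartite $t$ by a chain of elementary moves, and use the maps $\sigma_i^{\pm 1}$ between $\Phi_{\ap}(c)$ and $\Phi_{\ap}(s_i c s_i)$ (to be constructed in Section~\ref{sec:technical-results}) to transport (i)--(ii) along the chain. The bipartite base case is handled in Section~\ref{sec:bipartite}: the bijection $t_-: \Phi_{\ge-1} \to \Phi_{\ap}(t)$ is induced by a linear automorphism of $Q$, so pushing Theorem~1.5 of \cite{associahedra} through it yields the desired polytope $\asso_t^{f,\Phi}(W)$ directly.

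The main obstacle will be controlling the elementary-move maps $\sigma_i^{\pm 1}$: they are not linear on $Q_\mathbb{R}$, so boundary inequalities can in principle be distorted under transport. The resolution is to exhibit, for each $c$, a specific path of elementary moves to a bipartite $t$ along which every $\sigma_i^{\pm 1}$ acts linearly on the part of $\Phi_{\ap}$ that contributes to the support function, and to verify at each step that the positivity condition $\sum_i a_{ij} f(i) > 0$ is preserved by the corresponding change of $f$ and that the linear pullback of $F_c$ still agrees with $F_{s_i c s_i}$ on the transported clusters. Marshalling these linearity and positivity checks is the technical heart of the argument and will account for most of the bookkeeping in Sections~\ref{sec:technical-results} and \ref{sec:proof-main-results}.
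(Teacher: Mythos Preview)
Your overall strategy---reduce to the bipartite case by elementary moves---matches the paper's, but the execution diverges in two places, one of which is a real gap.

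First, the paper does not use induction on $|I|$ for this theorem at all, and the function $f$ never changes along the path of Coxeter elements. The proof in the paper simply invokes the wall-crossing criterion (Lemma~\ref{lemma:criterion-polytopality}), identifies each dependence relation with the $c$-cluster expansion of $\alpha+\gamma$ for an exchangeable pair (Lemma~\ref{lemma:reduction-to-c.e.}), and then proves the single inequality
\[
F_c(\alpha)+F_c(\gamma)>\max\{F_c(\alpha+\gamma),F_c(\alpha\uplus_c\gamma)\}
\]
(Proposition~\ref{prop:inequalities}) by transporting it along an arbitrary chain of elementary moves back to the bipartite case. Your proposed vertex-figure induction and your tracking of a ``change of $f$'' are unnecessary detours.

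Second, and more seriously, your proposed resolution of the non-linearity of $\sigma_i$---to search for a special path along which each $\sigma_i^{\pm1}$ happens to act linearly on the relevant roots---is not what the paper does and is not obviously feasible. The paper instead proves Lemma~\ref{lemma:sigma-is-tau}: for an exchangeable pair $\alpha,\gamma$, the vector $\sigma_1^{-1}(\sigma_1(\alpha)+\sigma_1(\gamma))$ is either $\alpha+\gamma$ or $\tau_c(\tau_c^{-1}(\alpha)+\tau_c^{-1}(\gamma))$. This yields
\[
\sigma_1\bigl(\{\alpha+\gamma,\;\alpha\uplus_c\gamma\}\bigr)=\{\sigma_1(\alpha)+\sigma_1(\gamma),\;\sigma_1(\alpha)\uplus_{s_1cs_1}\sigma_1(\gamma)\},
\]
so the two-element set in Proposition~\ref{prop:2-element-orbit} is preserved by $\sigma_1$ even though $\sigma_1$ is not linear. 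Combined with the orbit invariance $F_{s_1cs_1}(\sigma_1(\alpha))=F_c(\alpha)$, this moves the required inequality across \emph{any} elementary move. That lemma is the missing idea in your outline; without it, your plan to control $\sigma_i$ by choosing a clever path does not close.
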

Again by Proposition \ref{prop:translation-pi-phi}, Theorem
\ref{thm-phi:polytopality} implies Theorem \ref{thm-pi:polytopality}.

The proof of Theorem \ref{thm-phi:polytopality} is based on an explicit
characterization of the roots in $\Phi_{\ap}(c)$ belonging to adjacent maximal
cones of $\mathcal{F}_c^\Phi$. Namely there exist two $c$-cluster $C_\alpha$ and
$C_\gamma$ such that $C_\alpha\setminus\left\{ \alpha
\right\}=C_\gamma\setminus\left\{ \gamma \right\}$ if and only if
$$
	(\alpha||\gamma)_c^\Phi=1=(\gamma||\alpha)_c^\Phi
$$
(Cf. Lemma \ref{lemma:reduction-to-c.e.}). For all such pairs of roots the set 
$$
	\left\{ 
		\left( \tau_c^\Phi \right)^{-m}
		\left( 
			\left( \tau_c^\Phi \right)^m
			(\alpha)
			+
			\left( \tau_c^\Phi \right)^m
			(\beta)
		\right)
	\right\}_{m\in\mathbb{Z}}
$$
consists (when $I$ has no connected component with only one node) of precisely
two vectors, $\alpha+\gamma$ and $\alpha\uplus_c\gamma$; their $c$-cluster
expansion are supported on $C_\alpha\cup C_\gamma$ and they are disjoint (Cf.
Proposition \ref{prop:2-element-orbit} and Corollary \ref{cor:c.e.-intersection}).

Let $\mathcal{A}_0(c)$ the coefficient-free cluster algebra with initial
orientation given by $c$; label its cluster variables by roots in
$\Phi_{\ap}(c)$ and, in view of Proposition
\ref{prop-phi:unique-cluster-expansion}, its cluster monomials by points in the
root lattice. Using this notation Theorem \ref{thm-pi:exchange-relations} can be
restated as follows.
\begin{theorem}
	All the exchange relations in $\mathcal{A}_0(c)$ are of the form 
	$$
		x_{\alpha,c} x_{\gamma,c}=x_{\alpha+\gamma,c}+x_{\alpha\uplus_c\gamma,c}
	$$
	for suitable $c$-almost positive roots such that 
	$$
	  (\alpha||\gamma)_c^\Phi=1=(\gamma||\alpha)_c^\Phi.
	$$
	\label{thm-phi:exchange-relations}
\end{theorem}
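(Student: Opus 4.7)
\emph{Proof plan.}
Given an exchangeable pair $x_{\alpha,c}, x_{\gamma,c}$ (equivalently $(\alpha||\gamma)_c^\Phi = 1 = (\gamma||\alpha)_c^\Phi$ by Lemma \ref{lemma:reduction-to-c.e.}), the clusters $C_\alpha$ and $C_\gamma$ differ only in the swap $\alpha\leftrightarrow\gamma$. In the coefficient-free cluster algebra $\mathcal{A}_0(c)$ the exchange relation at the seed carrying $C_\alpha$ reads
$$x_{\alpha,c}\cdot x_{\gamma,c}=M_++M_-,$$
with $M_\pm$ monomials in $\{x_{\delta,c}:\delta\in C_\alpha\cap C_\gamma\}$ whose exponents are determined by the $B$-matrix at this seed. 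By Proposition \ref{prop-phi:unique-cluster-expansion}, each $M_\pm=x_{\mu_\pm,c}$ for a unique $\mu_\pm\in Q$ whose $c$-cluster expansion is supported on $C_\alpha\cap C_\gamma$, so the problem reduces to identifying $\{\mu_+,\mu_-\}$ with $\{\alpha+\gamma,\ \alpha\uplus_c\gamma\}$.

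The key ingredient is the $\tau_c^\Phi$-equivariance of the cluster structure: by the Yang--Zelevinsky construction, $\tau_c^\Phi$ is realized by a specific composition of mutations that permutes the cluster variables of $\mathcal{A}_0(c)$. Applying this rotation transports the exchange relation above to another valid exchange relation in which every label is replaced by its $\tau_c^\Phi$-image (extended to the relevant points of $Q$ through the $c$-cluster expansion of each factor). Iterating for all $m\in\mathbb{Z}$ and invoking Proposition \ref{prop:2-element-orbit}, the pair $\{\mu_+,\mu_-\}$ must lie inside the orbit
$$\left\{(\tau_c^\Phi)^{-m}\bigl((\tau_c^\Phi)^{m}\alpha+(\tau_c^\Phi)^{m}\gamma\bigr)\right\}_{m\in\mathbb{Z}}=\{\alpha+\gamma,\ \alpha\uplus_c\gamma\};$$
the disjoint-support statement of Corollary \ref{cor:c.e.-intersection} then gives $\mu_+\neq\mu_-$ and forces equality of the two sets.

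To anchor the argument I would verify a base case in which $\alpha=-\beta_i^c$ lies in the initial cluster: mutating at position $i$ of $B(c)$ yields
$$x_{-\beta_i^c,c}\cdot x_{\tau_c^\Phi(-\beta_i^c),c}=\prod_{j:\,b_{ji}(c)>0}x_{-\beta_j^c,c}^{\,b_{ji}(c)}+\prod_{j:\,b_{ji}(c)<0}x_{-\beta_j^c,c}^{\,-b_{ji}(c)},$$
whose RHS labels $-\sum_j[b_{ji}(c)]_+\beta_j^c$ and $-\sum_j[-b_{ji}(c)]_+\beta_j^c$ are matched with $\alpha+\gamma$ and $\alpha\uplus_c\gamma$ by a direct computation using the definition of $\beta_j^c$ in \eqref{eqn:root_basis} and the piecewise-linear extension of $\tau_c^\Phi$ to $Q_\mathbb{R}$. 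Arbitrary exchangeable pairs are reached via Lemma \ref{lemma:coxeter-connected} together with the elementary-move maps $\sigma_i^{\pm1}$ introduced in Section \ref{sec:phi-ap}. The main obstacle is making the $\tau_c^\Phi$-equivariance precise enough to place $\{\mu_+,\mu_-\}$ inside the two-element orbit above; once this equivariance is in hand, Proposition \ref{prop:2-element-orbit} and Corollary \ref{cor:c.e.-intersection} reduce the theorem to the base-case bookkeeping in the basis $\{-\beta_j^c\}_{j\in I}$.
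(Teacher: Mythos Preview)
Your plan diverges from the paper's argument and, as written, does not close.

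The paper's proof is short: it takes the bipartite case as the base (this is formula (5.1) in \cite{ca2}) and then transports the statement along elementary moves. Concretely, since $\sigma_1$ comes from the mutation at vertex $1$ one has $x_{\alpha,s_1cs_1}=x_{\sigma_1^{-1}(\alpha),c}$, and equation (\ref{eqn:sums-in-sums}) says that $\sigma_1$ carries the pair $\{\alpha+\gamma,\ \alpha\uplus_c\gamma\}$ to $\{\sigma_1(\alpha)+\sigma_1(\gamma),\ \sigma_1(\alpha)\uplus_{s_1cs_1}\sigma_1(\gamma)\}$. Chaining these two facts gives the theorem for $s_1cs_1$ from the theorem for $c$; Lemma \ref{lemma:coxeter-connected} then reaches every Coxeter element from a bipartite one.

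Your route via $\tau_c^\Phi$-equivariance has a genuine gap. Even granting that $\tau_c^\Phi$ is realized by a mutation sequence, the equivariance only tells you that the exchange relation for $(\tau_c^m\alpha,\tau_c^m\gamma)$ has right-hand labels $\tau_c^m\mu_\pm$ (piecewise-linear extension). To land inside the two-element set of Proposition \ref{prop:2-element-orbit} you still need \emph{one} value of $m$ at which you can identify $\{\tau_c^m\mu_+,\tau_c^m\mu_-\}$ with $\{\tau_c^m\alpha+\tau_c^m\gamma,\ \tau_c^m\alpha\uplus_c\tau_c^m\gamma\}$. Your proposed anchor---mutating the initial seed at $i$---only produces the single pair $(-\beta_i^c,\gamma_0)$ where $\gamma_0$ is the new variable of that specific mutation. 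But after rotating an arbitrary exchangeable pair so that $\tau_c^m\alpha=-\beta_i^c$, the partner $\tau_c^m\gamma$ is merely some root with $[\tau_c^m\gamma;\alpha_i]=1$; there is no reason it equals $\gamma_0$, and the surrounding cluster need not be the initial one. So the base case is too narrow to feed the equivariance argument.

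You then invoke the elementary moves $\sigma_i^{\pm1}$ anyway. That is exactly the mechanism the paper uses, and once you commit to it the $\tau_c$ layer becomes superfluous: take the bipartite case as the base (not the initial-seed mutation), and propagate with $\sigma_i$ and (\ref{eqn:sums-in-sums}). The missing ingredient in your write-up is the identity $x_{\sigma_i(\alpha),s_ics_i}=x_{\alpha,c}$, which is what lets an exchange relation in $\mathcal{A}_0(c)$ be read off as an exchange relation in $\mathcal{A}_0(s_ics_i)$ with all labels pushed through $\sigma_i$.
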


As mentioned before the main advantage of the labels $\Phi_{\ap}(c)$ over
$\Pi(c)$ is that it is easyer to set up inductions on $|I|$.
Let $J\subset I$ be a sub diagram of $I$. Fix a Coxeter element $c$ for $I$ and
denote by $c_J$ the sub-word of $c$ obtained omitting all the simple reflections
$\left\{s_i \right\}_{i\in I\setminus J}$.  By construction $c_J$ is a Coxeter
element in the Weyl group $W_J$ (we denote by $W_J$ the standard parabolic
subgroup of $W$ generated by $\left\{ s_j \right\}_{j\in J}$). Let
\begin{equation*}
	\iota:\Phi^J_{\ap}(c_J)\longrightarrow\Phi_{\ap}(c)
\end{equation*}
be the ``twisted'' inclusion  map given by
\begin{equation}
	\iota(\alpha):=\left\{ 
	\begin{array}{ll}
		-\beta_i^{c} & \mbox{if } \alpha=-\beta_i^{c_J},\,i\in J\\
		\alpha & \mbox{otherwise.}
	\end{array}
	\right.
\end{equation}

From this moment on, unless it is not clear from the context, superscripts
$\Phi$ and $\Pi$ will be omitted in order to make notation less heavy.

Denote by $\left( \bullet||\bullet \right)^J_{c_J}$ the $c_J$-compatibility
degree on $\Phi^J_{\ap}(c_J)$. The key property is this:
\begin{proposition}
  Let $\alpha$ and $\gamma$ be roots in $\Phi^J_{\ap}(c_J)$. Then
	\begin{equation*}
		\left( \iota(\alpha)||\iota(\gamma) \right)_c=
		\left( \alpha||\gamma \right)^J_{c_J}
		\,.
	\end{equation*}
  \label{prop:inductive-property}
\end{proposition}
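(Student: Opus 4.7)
The plan is to use the characterization of the $c_J$-compatibility degree on $\Phi_{\ap}^J(c_J)$ as the unique $\tau_{c_J}^\Phi$-invariant function satisfying the prescribed initial conditions. I will show that the pullback
$$
f(\alpha,\gamma):=(\iota(\alpha)||\iota(\gamma))_c
$$
satisfies both defining properties, which forces $f=(\cdot||\cdot)^J_{c_J}$ by uniqueness.

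Checking the initial conditions is the easier half. For $i\in J$ we unravel
$$
f(-\beta_i^{c_J},\gamma)=(-\beta_i^c||\iota(\gamma))_c=[\iota(\gamma);\alpha_i]_+,
$$
so it suffices to verify $[\iota(\gamma);\alpha_i]_+=[\gamma;\alpha_i]_+$ for every $\gamma\in\Phi_{\ap}^J(c_J)$. When $\gamma\in\Phi_+^J$, the map $\iota$ fixes $\gamma$ and the equality is immediate. When $\gamma=-\beta_j^{c_J}$ for some $j\in J$, both sides vanish: by Remark \ref{rk:triangular-change-of-base} the roots $\beta_j^c$ and $\beta_j^{c_J}$ are both positive, so the $\alpha_i$-coefficients of $-\beta_j^c$ and $-\beta_j^{c_J}$ are non-positive and the positive-part truncation yields $0$ on each side.

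The substantive difficulty is establishing the $\tau_{c_J}^\Phi$-invariance of $f$. The obstacle is that $\iota$ does \emph{not} intertwine $\tau_{c_J}^\Phi$ and $\tau_c^\Phi$; already in type $A_3$ one can exhibit a single $\tau_{c_J}^\Phi$-orbit whose $\iota$-image is distributed across several distinct $\tau_c^\Phi$-orbits, so a naive attempt to shift both arguments of $f$ by a common power of $\tau_c$ will not work. My plan is to first establish the proposition in the bipartite case and then propagate to arbitrary Coxeter elements. When $c=t$ is bipartite, the linear bijection $t_{-}:\Phi_{\ge-1}\to\Phi_{\ap}(t)$ from Section \ref{sec:bipartite} identifies the $t$-compatibility degree with the classical Fomin--Zelevinsky compatibility degree, for which the analogous sub-diagram restriction property is well known. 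For arbitrary $c$, I propagate the statement using the maps $\sigma_i^{\pm1}$ between sets of $c$-almost-positive roots for adjacent Coxeter elements, together with Lemma \ref{lemma:coxeter-connected}. The technical heart of this propagation is a commutation check: for each elementary move from $c$ to $c'$ and each sub-diagram $J$, the corresponding inclusion maps $\iota^c$ and $\iota^{c'}$ must be intertwined by the $\sigma_i^{\pm1}$ in a manner that preserves compatibility degrees. Once this commutation is verified, the bipartite base case yields the conclusion for every Coxeter element.
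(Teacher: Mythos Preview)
Your overall strategy---reduce to the bipartite case via the linear bijection $t_-$ and then propagate to general $c$ along chains of elementary moves using the maps $\sigma_i^{\pm1}$---is precisely the paper's approach. (The opening paragraph, verifying the initial conditions for the uniqueness characterisation, is correct but becomes irrelevant once you pivot to the propagation argument.)

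The genuine gap is in the propagation step. You assert that ``for each elementary move from $c$ to $c'$ and each sub-diagram $J$'' the inclusions are intertwined by $\sigma_i^{\pm1}$; but this fails whenever the move conjugates by some $s_k$ with $k\notin J$. In that situation $s_k\notin W_J$, so there is no map $\sigma_k^J$ on the $J$-side at all, and one would need $\sigma_k\circ\iota_c=\iota_{c'}$ outright. This already fails in type $A_2$ with $J=\{2\}$, $c=s_1s_2$, $k=1$: here $\sigma_1(\iota_c(\alpha_2))=s_1\alpha_2=\alpha_1+\alpha_2\neq\alpha_2=\iota_{c'}(\alpha_2)$. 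The commutation you actually have (the paper's statement that $\sigma_1\circ\iota_c=\iota_{s_1cs_1}\circ\sigma_1^J$) is only available when the conjugating index lies in $J$, so it does not iterate along an arbitrary chain of elementary moves. The paper closes this gap by first reducing to $J=I\setminus\{i\}$ and then proving separately (Lemma~\ref{lemma:w-without-i}, sharpened for leaves in Lemma~\ref{lemma:distant-conjugation}) that $c$ can be connected to a bipartite $t$ by a sequence of elementary moves that avoids $s_i$ entirely; only along such a restricted path does the commutation iterate. The paper also treats independently the case in which $\supp(\alpha)$ and $\supp(\gamma)$ lie in different connected components of $J$, via Proposition~\ref{prop:spaced-compatibility} on spaced roots, which your outline does not address.
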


\begin{remark}
	In the setup of almost positive roots the analog of this statement is point 3
	of Proposition 3.3 in \cite{y-system}; there the map $\iota$ is the ordinary
	inclusion.  
	A proof of Proposition \ref{prop:inductive-property} will be given
	in Section \ref{sec:technical-results}. 
\end{remark}

The original construction in \cite{y-system} does not distinguish among the
possible bipartite orientations of $I$. With this motivation in mind 
consider the map $\alpha\mapsto\overline\alpha$ between $\Phi_{\ap}(c)$ and
$\Phi_{\ap}(c^{-1})$ defined by
\begin{equation}
	\overline\alpha:=\left\{ 
	\begin{array}{ll}
		-\beta_i^{c^{-1}} & \mbox{if } \alpha=-\beta_i^{c},\,i\in I\\
		\alpha & \mbox{otherwise}\,.
	\end{array}
	\right.
\end{equation}

\begin{proposition}
  For any $\alpha$ and $\gamma$ in $\Phi_{\ap}(c)$
	\begin{equation*}
		\left( \alpha||\gamma \right)_c=
		\left( \overline\alpha||\overline\gamma \right)_{c^{-1}}
		\,.
	\end{equation*}
  \label{prop:inverse-invariance}
\end{proposition}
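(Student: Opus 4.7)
The plan is to use uniqueness of the $c$-compatibility degree: since $(\bullet||\bullet)_c^\Phi$ is the unique $\tau_c^\Phi$-invariant function $\Phi_{\ap}(c)\times\Phi_{\ap}(c)\to\mathbb{N}$ satisfying the initial conditions $(-\beta_i^c||\alpha)_c^\Phi=[\alpha;\alpha_i]_+$, it suffices to exhibit $f(\alpha,\gamma):=(\overline\alpha||\overline\gamma)_{c^{-1}}^\Phi$ as another function with these two properties, then invoke uniqueness.

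Verifying the initial conditions is straightforward. By definition $\overline{-\beta_i^c}=-\beta_i^{c^{-1}}$, so $f(-\beta_i^c,\gamma)=[\overline\gamma;\alpha_i]_+$. When $\gamma\in\Phi_+$ we have $\overline\gamma=\gamma$ and $[\overline\gamma;\alpha_i]_+=[\gamma;\alpha_i]_+$; when $\gamma=-\beta_j^c$ both $\gamma$ and $\overline\gamma=-\beta_j^{c^{-1}}$ are negative roots, so both positive-part brackets vanish. Either way $f(-\beta_i^c,\gamma)=(-\beta_i^c||\gamma)_c^\Phi$.

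The heart of the argument is the intertwining identity
$$\overline{\tau_c^\Phi(\alpha)}=(\tau_{c^{-1}}^\Phi)^{-1}(\overline\alpha),\qquad \alpha\in\Phi_{\ap}(c),$$
whose key input is the elementary computation $c\beta_i^c=s_1\cdots s_n\cdot s_n\cdots s_{i+1}\alpha_i=s_1\cdots s_i\alpha_i=-s_1\cdots s_{i-1}\alpha_i=-\beta_i^{c^{-1}}$. I then dispatch three cases: (a) $\alpha=\beta_i^c$, where $\tau_c^\Phi\alpha=-\beta_i^c$ bars to $-\beta_i^{c^{-1}}$, while $\overline\alpha=\beta_i^c$ and $(\tau_{c^{-1}}^\Phi)^{-1}(\beta_i^c)=c\beta_i^c=-\beta_i^{c^{-1}}$; (b) $\alpha=-\beta_i^c$, where $\tau_c^\Phi\alpha=-c\beta_i^c=\beta_i^{c^{-1}}\in\Phi_+$ is bar-fixed, while $\overline\alpha=-\beta_i^{c^{-1}}$ and $(\tau_{c^{-1}}^\Phi)^{-1}(-\beta_i^{c^{-1}})=\beta_i^{c^{-1}}$; (c) $\alpha\in\Phi_+\setminus\{\beta_j^c\}_{j\in I}$, where $\tau_c^\Phi\alpha=c\alpha$ is again a positive root (hence bar-fixed), $\overline\alpha=\alpha$, and $(\tau_{c^{-1}}^\Phi)^{-1}(\alpha)=c\alpha$. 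With the intertwining in hand, $\tau_c^\Phi$-invariance of $f$ is immediate from $\tau_{c^{-1}}^\Phi$-invariance of $(\bullet||\bullet)_{c^{-1}}^\Phi$, and uniqueness closes the argument.

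The main obstacle is the intertwining identity itself: once the formula $c\beta_i^c=-\beta_i^{c^{-1}}$ is recognized the three cases reduce to routine bookkeeping, but one has to be careful about which elements of $\Phi_{\ap}(c)$ get relabelled by the bar map and which are fixed, and to verify that the inverse $(\tau_{c^{-1}}^\Phi)^{-1}$ is being computed on the correct branch of its piecewise defining rule (in particular that $cx\in\Phi_{\ap}(c^{-1})$ and is not of the form $-\beta_j^{c^{-1}}$ when the rule $(\tau_{c^{-1}}^\Phi)^{-1}(x)=cx$ is invoked).
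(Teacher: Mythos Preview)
Your proof is correct and follows essentially the same approach as the paper: both verify the initial conditions via $[\alpha;\alpha_i]_+=[\overline\alpha;\alpha_i]_+$ and establish the intertwining identity $\overline{\tau_c(\alpha)}=\tau_{c^{-1}}^{-1}(\overline\alpha)$ by the same three-case analysis ($\alpha=\beta_i^c$, $\alpha=-\beta_i^c$, $\alpha\neq\pm\beta_i^c$), then appeal to uniqueness of the $\tau_c$-invariant compatibility degree. Your presentation is slightly cleaner in isolating the key computation $c\beta_i^c=-\beta_i^{c^{-1}}$ once and reusing it, whereas the paper redoes the manipulation inline in each case.
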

\begin{proof}
  Initial conditions agree:
  $$
	  \left( -\beta_i^c||\alpha \right)_c=
 		\left[ \alpha;\alpha_i \right]_+=
 		\left[ \overline\alpha;\alpha_i \right]_+=
 		\left( -\beta_i^{c^{-1}}||\overline\alpha \right)_{c^{-1}}=
  	\left(\overline{-\beta_i^c}||\overline\alpha \right)_{c^{-1}}
		\,.
  $$
  It suffices then to show that, for any $\alpha\in\Phi_{\ap}(c)$
  $$
 		\overline{\tau_c(\alpha)}=\tau_{c^{-1}}^{-1}(\overline\alpha)
		\,.
  $$
  There are three cases to be considered.
	\begin{enumerate}
		\item  
			If $\alpha=-\beta_i^c$ for some $i\in I$ then on the one hand 
			\begin{align*}
				\overline{\tau_c(-\beta_i^c)}= 
				\overline{-c\beta_i^c}=
				\overline{-s_1\cdots s_n(s_n\cdots s_{i+1}\alpha_i)}= 
				\overline{s_1\cdots s_{i-1}\alpha_i}= 
				\beta_i^{c^{-1}}
				;
			\end{align*}
			on the other hand
			\begin{align*}
				\tau_{c^{-1}}^{-1}\left(\overline{-\beta_i^c}\right)=
				\tau_{c^{-1}}^{-1}\left(-\beta_i^{c^{-1}} \right)=
				\beta_i^{c^{-1}}
				\,.
			\end{align*}
 
 		\item
  		When $\alpha=\beta_i^c$
  		\begin{align*}
    		\overline{\tau_c(\beta_i^c)}=
    		\overline{-\beta_i^c}=
    		-\beta_i^{c^{-1}}=
   			s_1\cdots s_i\alpha_i
  		\end{align*}
			multiplying and dividing by $s_{i+1}\cdots s_n$ we get
  		\begin{align*}
		    s_1\cdots s_i(s_{i+1}\cdots s_n s_n\cdots s_{i+1})\alpha_i=
   		 	\left( c^{-1} \right)^{-1}\beta_i^c=
		    \tau_{c^{-1}}^{-1}\left( \beta_i^c \right)=
   		  \tau_{c^{-1}}^{-1}\left(\overline{\beta_i^c} \right)
				\,.
  		\end{align*}

		\item
  		Finally for $\alpha\neq\pm\beta_i^c$
  		$$
  			\overline{\tau_c(\alpha)}=
			  \overline{c\alpha}=
			  c\alpha=
			  (c^{-1})^{-1}\alpha=
			  \tau_{c^{-1}}^{-1}(\overline\alpha)
				\,.
		  $$
	\end{enumerate}

\end{proof}

\section{The bipartite case}
\label{sec:bipartite}
In this section we assume that the Dynkin diagram $I$ is connected; the
statements in the general case are easily reduced to this.  Since any connected
Dynkin diagram $I$ is a tree, we can split $I$ into two disjoint subsets $I_+$
and $I_-$ such that every edge in $I$ has one  endpoint in $I_+$ and one in
$I_-$.  Up to relabeling, this can be done in a unique way. A bipartite Coxeter
can thus be written as
\begin{equation}
	t=t_\varepsilon t_{-\varepsilon}
	\label{eqn:bipartite-coxeter}
\end{equation}
where $\varepsilon$ denotes a sign and
\begin{equation*}
	t_\varepsilon:=\prod_{i\in I_\varepsilon}s_i
\end{equation*}
(the expression makes sense since the factors commute with each other). 
By our assumption there are precisely two bipartite Coxeter elements in $W$: 
$t=t_+t_-$ and $t^{-1}=t_-t_+$.

Let $\Phi_{\ge-1}$ be the set of \emph{almost positive roots}, i.e.
\begin{equation*}
	\Phi_{\ge-1}:=\Phi_+\cup\left\{ -\alpha_i \right\}_{i\in I}
\end{equation*}
introduced in \cite{y-system} to parametrize cluster variables in the special
case of bipartite initial cluster.  On it there are two involutions $\tau_+$ and
$\tau_-$ defined by
\begin{equation*}
	\tau_\varepsilon(\alpha)=
	\left\{ 
	\begin{array}[]{ll}
		\alpha & \mbox{if } \alpha=-\alpha_i \mbox{ and } i\in I_{-\varepsilon}\\
		t_\varepsilon\alpha & \mbox{otherwise}
	\end{array}
	\right.
\end{equation*}
and a unique $\left\{ \tau_+,\tau_- \right\}$-invariant compatibility degree
function $\left( \bullet ||\bullet \right)_{\ge-1}$
satisfying
\begin{equation*}
	\left( -\alpha_i||\gamma \right)_{\ge-1}=
	\left[ \gamma;\alpha_i\right]_+.
\end{equation*}

Call two almost positive roots $\alpha$ and $\gamma$ \emph{compatible} if 
\begin{equation}
	\left( \alpha||\gamma \right)_{\ge-1}=0=\left( \gamma||\alpha
	\right)_{\ge-1}
	\,.
\end{equation}
The \emph{cluster complex} $\Delta_{\ge-1}$ is the abstract simplicial complex
induced on $\Phi_{\ge-1}$ by the compatibility degree function; its simplices
are subsets of pairwise compatible almost positive roots.
As before  call the maximal simplices \emph{clusters} and consider the
set $\mathcal{F}_{\ge-1}$ of all simplicial cones generated by simplices.

As we mentioned in the introduction our construction is based on the results for
the bipartite case given in \cite{y-system} and \cite{associahedra}. From the
first paper we will need the following.
\begin{proposition}$\,$
\begin{enumerate}
	\item \emph{[Proposition 3.3 (2)]}
		For any pair of almost positive roots $\alpha$ and $\gamma$, we have 
		$$
			\left(\alpha||\gamma \right)_{\ge-1}=0
		$$ 
		if and only if $\left( \gamma||\alpha \right)_{\ge-1}=0$.

	\item \emph{[Proposition 3.3 (3)]}
		Let $J$ be a subset of $I$ and denote by $\Phi^J_{\ge-1}$ the corresponding
		set of almost positive roots. Let $\alpha$ and $\gamma$ be roots in
		$\Phi^J_{\ge-1}$, then
		$$
    	\left( \alpha||\gamma \right)_{\ge-1}=
    	\left( \alpha||\gamma\right)^J_{\ge-1}
    $$
		where $(\bullet||\bullet)_{\ge-1}^J$ denotes the compatibility degree
		function on $\Phi^J_{\ge-1}$.

	\item \emph{[Theorem 1.8]}
		Each cluster in the cluster complex is a
		$\mathbb{Z}$-basis of the root lattice $Q$. 
	
	\item \emph{[Theorem 3.11]}
		Any $\gamma\in Q$ admits a unique \emph{cluster expansion}. In other words
		$\gamma$ can uniquely be written as
		$$
			\gamma=\sum_{\alpha\in\Phi_{\ge-1}}m_\alpha\alpha
		$$
		so that all the coefficients $m_\alpha$ are negative integers and $m_\alpha
		m_{\alpha'}=0$ if $\left( \alpha||\alpha' \right)_{\ge-1}\neq0$.

  \item \emph{[Theorem 1.10]} 
		$\mathcal{F}_{\ge-1}$ is a complete simplicial fan in $Q_\mathbb{R}$. 
\end{enumerate}
\end{proposition}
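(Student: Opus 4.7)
The plan is to recognize that the proposition is organized as a digest of previously established results from \cite{y-system}, with each of the five items carrying an explicit bracketed citation to the precise statement in that reference. Accordingly, the proof is not a proof in the usual sense; it is a translation exercise. My task would be to verify that the notation and hypotheses used here agree with those of \cite{y-system} and then invoke the cited statements verbatim.

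First I would check the basic dictionary. The bipartite Coxeter element $t=t_+t_-$ used here is the same one used throughout \cite{y-system}, and the involutions $\tau_+,\tau_-$ are defined identically. The set $\Phi_{\ge-1}$ is the original set of almost positive roots, and the compatibility degree $(\bullet||\bullet)_{\ge-1}$ is characterized by the same initial conditions and the same invariance under $\{\tau_+,\tau_-\}$, so our normalization matches. Once this is confirmed, items (1) and (2) are word-for-word Proposition 3.3 (parts (2) and (3)) of \cite{y-system}; item (3) is Theorem 1.8; and item (5) is Theorem 1.10. Item (4) requires the tiny caveat that the coefficients $m_\alpha$ are non-negative integers (the statement as printed says ``negative integers,'' which is clearly a typo for ``non-negative''); with that correction it is exactly Theorem 3.11 of \cite{y-system}.

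There is essentially no obstacle. The only minor point worth flagging, which I would mention in a single sentence, is that for item (2) one must also observe that the $\tau_+$- and $\tau_-$-actions on $\Phi_{\ge-1}^J$ agree with the restrictions of the corresponding involutions on $\Phi_{\ge-1}$, since $t_\varepsilon^J = \prod_{i\in I_\varepsilon\cap J}s_i$ is the obvious sub-product; this compatibility of involutions is what makes Proposition 3.3(3) of \cite{y-system} applicable here. With that remark in place, the proof reduces to a list of citations.
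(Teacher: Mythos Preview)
Your reading is correct and matches the paper exactly: this proposition is stated as a summary of results imported from \cite{y-system}, each item tagged with its source, and the paper gives no proof beyond the citations. Your observation about the typo (``negative'' should be ``non-negative'') is also accurate.
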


The results we will need form \cite{associahedra} can be summarized as follows.
\begin{proposition}
	Suppose that $I$ has at least $2$ vertices.  Let $\alpha$ and $\gamma$ be
	almost positive roots such that $\left( \alpha||\gamma \right)_{\ge
	-1}=1=\left( \gamma||\alpha \right)_{\ge-1}$. Then we have:
	\begin{enumerate}
		\item \emph{[Theorem 1.14]}
			The set  
			$$
				\left\{ \tau\left( \tau^{-1}(\alpha)+\tau^{-1}(\gamma) \right)
				\right\}_{\tau\in T}
			$$
			where $T$ denotes the group generated by
			$\tau_+$ and $\tau_-$, consists of exactly two elements $\alpha+\gamma$
			and $\alpha\uplus\gamma$.

		\item \emph{[Lemma 2.3]}
			Any root appearing with a positive coefficient in the cluster expansion
			of $\alpha+\gamma$ or $\alpha\uplus\gamma$ is compatible with both
			$\alpha$, $\gamma$, and with any other root compatible with both $\alpha$
			and $\gamma$.
			\label{prop:bipartite-inductive-property_2:compatibility}

		\item \emph{[Lemma 2.4]}
			Let $f:I\rightarrow\mathbb{R}_+$ be any function such that, for any
			$i\in I$,
			$$
				f(i)=f(i^*)
			$$
			and
			$$
				\sum_{i\in I}a_{ij}f(i)>0
			$$
			for any $j\in I$. Let $F_{\ge-1}:Q_\mathbb{R}\rightarrow\mathbb{R}$ be a
			continuous piecewise-linear function on $Q_\mathbb{R}$ that is linear on
			the maximal cones of $\mathcal{F}_{\ge-1}$, invariant under the action
			of $T$, and such that 
			$$
				F_{\ge-1}(-\alpha_i)=f(i)
				\,.
			$$
			Then,
			$$
				F_{\ge-1}(\alpha)+F_{\ge-1}(\gamma)>\max\left\{
				F_{\ge-1}(\alpha+\gamma),F_{\ge-1}(\alpha\uplus\gamma) \right\}
				\,.
			$$
			\label{prop:bipartite-inductive-property_2:inequality}

	\end{enumerate}
\end{proposition}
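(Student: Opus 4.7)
The plan is to prove all three assertions simultaneously by induction on $|I|$, exploiting the fact that each conclusion only concerns roots that are compatible with both $\alpha$ and $\gamma$, so the whole argument lives inside the sub-root system determined by a proper sub-diagram of $I$ whenever $\alpha$ and $\gamma$ do not jointly fill $I$. This reduction relies crucially on Proposition~3.3 part (3) (the inductive property of the compatibility degree on $\Phi_{\ge-1}$), which lets us transport each statement to a parabolic sub-system $\Phi^J_{\ge-1}$ verbatim. The base cases are the rank-$2$ root systems $A_1\times A_1$, $A_2$, $B_2$, $G_2$, each of which admits a direct enumeration of $\Phi_{\ge-1}$, of its cluster complex, and of the action of $\tau_+,\tau_-$.

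For part (1), I would first note that the set
$$\bigl\{\tau(\tau^{-1}(\alpha)+\tau^{-1}(\gamma))\bigr\}_{\tau\in T}$$
is finite because every $T$-orbit in $\Phi_{\ge-1}$ is finite. The pair $(\tau^{-1}(\alpha),\tau^{-1}(\gamma))$ also has compatibility degrees $(1,1)$ for every $\tau\in T$, by $T$-invariance of $(\bullet||\bullet)_{\ge-1}$. I would then isolate the subset of $I$ consisting of simple roots that appear in the support of some element of the orbit; if it is a proper subset $J\subsetneq I$ the induction hypothesis applied inside $\Phi^J_{\ge-1}$ finishes the proof. The only irreducible case that remains is when $\alpha,\gamma$ together span all of $I$, and since two compatibility-$1$ roots can jointly generate only rank-$2$ sub-configurations (they belong to a common rank-$2$ parabolic, where they are exchangeable), this forces $|I|=2$, reducing everything to direct verification in $A_1\times A_1$, $A_2$, $B_2$, $G_2$.

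For part (2), any root $\delta$ appearing in the cluster expansion of $\alpha+\gamma$ or $\alpha\uplus_c\gamma$ lies in the positive linear span of some cluster $C$; since $C$ is a $\mathbb{Z}$-basis of $Q$ and both $\alpha+\gamma$ and $\alpha\uplus\gamma$ lie on the boundary wall between the two clusters exchanging $\alpha\leftrightarrow\gamma$, the cluster $C$ must share with this wall all roots compatible with both $\alpha$ and $\gamma$, so $\delta$ belongs to such a common-compatibility subset. Combined with part (1) of the cited proposition (the symmetry of vanishing of the compatibility degree) this yields the compatibility conclusion after, once more, a final rank-$2$ check.

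Part (3) is where I expect the real work: the inequality is precisely the statement that $F_{\ge-1}$ is strictly convex across the codimension-$1$ wall separating the two maximal cones of $\mathcal{F}_{\ge-1}$ containing $\alpha$ and $\gamma$. Equivalently, writing $\alpha+\gamma$ as the positive combination on the two sides of this wall (one side gives $\alpha+\gamma$ itself, the other gives the expansion involving $\alpha\uplus\gamma$ together with shared roots), one needs to verify that the two linear extensions of $F_{\ge-1}$ disagree at either $\alpha+\gamma$ or $\alpha\uplus\gamma$ with a sign forced by the hypotheses $f(i)=f(i^*)$ and $\sum_i a_{ij}f(i)>0$. The main obstacle is to show this strict inequality uniformly; I would again reduce via Proposition~3.3 to the rank-$2$ base case, where the inequality becomes an explicit linear-algebra computation in $f(1),f(2)$ using the Cartan matrix of type $A_1\times A_1$, $A_2$, $B_2$, or $G_2$, and the positivity condition $\sum_i a_{ij}f(i)>0$ is exactly what makes each of these computations strict.
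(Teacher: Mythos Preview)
The paper does not prove this proposition from scratch: parts (1), (2), (3) are quoted from \cite{associahedra} (Theorem~1.14, Lemma~2.3, Lemma~2.4 there). The only argument the paper supplies is the short paragraph following the statement, which explains how the claims about $\alpha\uplus\gamma$ (not stated explicitly in \cite{associahedra}) follow from the corresponding claims about $\alpha+\gamma$: one picks $\tau\in T$ with $\alpha\uplus\gamma=\tau^{-1}(\tau(\alpha)+\tau(\gamma))$ and uses $T$-invariance of both the compatibility degree and $F_{\ge-1}$. So the comparison you want is really with the proofs in \cite{associahedra}, and those do \emph{not} proceed by reduction to rank~$2$; they use $T$-invariance to move one of the roots to a negative simple $-\alpha_p$ and then analyse the situation explicitly.

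Your attempted direct proof has a genuine gap. The assertion that ``two compatibility-$1$ roots can jointly generate only rank-$2$ sub-configurations (they belong to a common rank-$2$ parabolic)'' is false. Already in type $A_3$ the roots $-\alpha_2$ and $\alpha_1+\alpha_2+\alpha_3$ satisfy $(\alpha||\gamma)_{\ge-1}=(\gamma||\alpha)_{\ge-1}=1$, and their supports together fill all of $I$; they do not sit inside any proper parabolic sub-system. Exchangeable pairs with full joint support occur at every rank, so your induction never reaches the base case. The same issue breaks your plan for (3): there is in general no proper sub-diagram $J$ containing both $\alpha$ and $\gamma$ to which one could restrict the hypothesis $\sum_i a_{ij}f(i)>0$.

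Your argument for (2) is also circular. You assume that $\alpha+\gamma$ and $\alpha\uplus\gamma$ ``lie on the boundary wall between the two clusters exchanging $\alpha\leftrightarrow\gamma$'', but this is precisely what (2) is used to establish later in the paper (see Lemma~\ref{lemma:reduction-to-c.e.}, whose proof invokes Corollary~\ref{cor:c.e.-intersection}, the $c$-analogue of (2)). Without (2) you do not yet know that the cluster expansion of $\alpha+\gamma$ is supported on $C_\alpha\cap C_\gamma$, nor even that a pair with mutual compatibility degree $1$ corresponds to a pair of adjacent maximal cones.
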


The statements regarding $\alpha\uplus\gamma$ are not expressed explicitly in
\cite{associahedra} but can be recovered immediately from the corresponding
statements about $\alpha+\gamma$. Indeed, let $\tau$ be such that
$$
	\alpha\uplus\gamma=\tau^{-1}(\tau(\alpha)+\tau(\gamma))
	\,.
$$
Any root appearing with positive coefficient in the cluster expansion of
$\tau(\alpha\uplus\gamma)$ is compatible with  $\tau(\alpha)$, $\tau(\gamma)$
and with any root compatible with both. Since $\tau$ preserves the compatibility
degree we get \ref{prop:bipartite-inductive-property_2:compatibility}. Similarly
for \ref{prop:bipartite-inductive-property_2:inequality}:
\begin{align*}
	F_{\ge-1}(\tau(\alpha))+F_{\ge-1}(\tau(\gamma))>
	F_{\ge-1}\left( \tau(\alpha)+\tau(\gamma )\right)=
	F_{\ge-1}(\tau(\alpha\uplus\gamma))
\end{align*}
since $F_{\ge-1}$ is invariant under the action of $\tau$ we can conclude
\begin{align*}
	F_{\ge-1}(\alpha)+F_{\ge-1}(\gamma)>
	F_{\ge-1}(\alpha\uplus\gamma)
	\,.
\end{align*}

We need to translate the above results to $\Phi_{\ap}(t)$; in order to do so we
need a bijection between $\Phi_{\ge-1}$ and $\Phi_{\ap}(t)$ induced by a linear
map. Note that Proposition \ref{prop:translation-pi-phi} together with Lemma 5.2
in \cite{shih} already provide a bijection but it is not induced by a linear
map.

Note also that, for a bipartite Coxeter element $t=t_+t_-$, the negative roots in
$\Phi_{\ap}(t)$ are the roots $-\beta_i^t$ given by
\begin{equation*}
	-\beta_i^t=
	\left\{ 
	\begin{array}{ll}
		-\alpha_i & i\in I_-\\
		-t_-\alpha_i & i\in I_+
		\,.
	\end{array}
	\right.
\end{equation*}

\begin{proposition}
  The linear involution
	\begin{equation*}
		t_-:Q_\mathbb{R}\longrightarrow Q_\mathbb{R}
	\end{equation*}
  restricts to an automorphism of $Q$ and to a bijection
	\begin{equation*}
		t_-:\Phi_{\ge-1}\longrightarrow\Phi_{\ap}(t)
		\,.
	\end{equation*}
\end{proposition}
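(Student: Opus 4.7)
The plan is to verify the statement by direct case analysis, using the explicit formula for $\beta_i^t$ recalled just before the statement. That $t_-$ is an involution on $Q_\mathbb{R}$ (and an automorphism of $Q$) is immediate: the simple reflections $s_i$ for $i \in I_-$ pairwise commute since $I_-$ is an independent set in the bipartite diagram $I$, so $t_-$ has order two; being an element of $W$, it preserves the root lattice $Q$.

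The main ingredient I would use is that the inversion set of $t_-$ equals $\{\alpha_i : i \in I_-\}$. This follows because the given product expression for $t_-$ is reduced (a product of $|I_-|$ distinct commuting simple reflections), hence its inversion set has cardinality $|I_-|$; on the other hand, for each $i \in I_-$ we have $t_-\alpha_i = s_i\alpha_i = -\alpha_i$, since the remaining $s_j$ with $j \in I_-$ fix $\alpha_i$ (as $a_{ij}=0$ between vertices of $I_-$). Thus these $|I_-|$ positive roots exhaust the inversion set.

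With this in hand, the four cases partitioning $\Phi_{\ge-1}$ are straightforward to check: for $\alpha \in \Phi_+ \setminus \{\alpha_i : i \in I_-\}$ the image $t_-\alpha$ remains positive, hence lies in $\Phi_{\ap}(t)$; for $\alpha = -\alpha_i$ with $i \in I_-$ the image is $\alpha_i \in \Phi_+$; for $\alpha = \alpha_i$ with $i \in I_-$ the image is $-\alpha_i = -\beta_i^t$; and for $\alpha = -\alpha_i$ with $i \in I_+$ the image is $-t_-\alpha_i = -\beta_i^t$, using the displayed formula for the negative roots in $\Phi_{\ap}(t)$. This shows $t_-(\Phi_{\ge-1}) \subseteq \Phi_{\ap}(t)$, and since $t_-$ is injective and both sets have cardinality $|\Phi_+| + |I|$, the inclusion is a bijection. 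I do not expect any serious obstacle here: the explicit form of $\beta_i^t$ in the bipartite case provided in the excerpt was designed precisely to make this bijection manifest.
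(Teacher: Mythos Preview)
Your proof is correct and follows essentially the same case analysis as the paper. The only notable difference is in how you justify that $t_-$ sends a positive root $\alpha\notin\{\alpha_i : i\in I_-\}$ to a positive root: you invoke the standard fact that the inversion set of $t_-$ has size $\ell(t_-)=|I_-|$ and is therefore exhausted by $\{\alpha_i : i\in I_-\}$, whereas the paper argues directly that $t_-\alpha$ differs from $\alpha$ only in its $I_-$-coordinates and that the connected support of $\alpha$ must meet $I_+$, forcing $t_-\alpha$ to have a positive coefficient and hence to be positive. Both justifications are routine; your inversion-set argument is arguably cleaner, while the paper's support argument stays closer to the explicit root-coordinate viewpoint used elsewhere in the section.
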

\begin{proof}
	It suffices to show that, for any root $\alpha$ in $\Phi_{\ge-1}$, we have
	$t_-(\alpha)\in\Phi_{\ap}(t)$. Let us first deal with roots whose image is
	negative.
	\begin{itemize}
		\item 
			If $i\in I_+$, then 
  		$$
  			t_-(-\alpha_i)=-t_-\alpha_i=-\beta_i^t
				\,.
  		$$

		\item
			If $i\in I_-$, then
  		$$
  			t_-(\alpha_i)=s_i\alpha_i=-\alpha_i=-\beta_i^t
				\,.
  		$$
	\end{itemize}
	This also shows that $t_-(-\alpha_i)=\beta_i^t$ if $i\in I_-$. For any other
	root $\alpha$ in $\Phi_{\ge-1}$, that is, for any positive root not in
	$\left\{ \alpha_i \right\}_{i\in I_-}$, the image $t_-(\alpha)$ is positive
	since the support of any positive root is a connected sub-diagram of the
	Dynkin diagram and $t_-$ sends any root to itself plus a linear combination of
	simple roots indexed by $I_-$.

\end{proof}

\begin{proposition}
	The map $t_-$ intertwines $\tau_t$ with $\tau_-\tau_+$ and preserves
	compatibility degree. In other words, for any almost positive roots $\alpha$
	and $\gamma$, we have
	\begin{equation*}
		\tau_t(t_-\alpha)=t_-\tau_-\tau_+(\alpha)
	\end{equation*}
  and
	\begin{equation*}
		\left( \alpha||\gamma \right)_{\ge-1}=
  	\left( t_-\alpha||t_-\gamma \right)_t
	\end{equation*}
\end{proposition}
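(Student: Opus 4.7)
The plan is to establish the two claims in sequence: first the intertwining $\tau_t\circ t_-=t_-\circ\tau_-\tau_+$, and then use it to transfer the compatibility degree.

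For the intertwining I would do a case analysis on $\alpha\in\Phi_{\ge-1}$: (i) $\alpha=-\alpha_i$ with $i\in I_-$, (ii) $\alpha=-\alpha_i$ with $i\in I_+$, (iii) $\alpha=\alpha_i$ with $i\in I_+$, and (iv) $\alpha\in\Phi_+$ with $\alpha\neq\alpha_j$ for every $j\in I_+$. Cases (i)--(iii) reduce to direct computation using the description of $t_-(-\alpha_i)$ from the preceding proposition, the rules $\tau_+(-\alpha_i)=-\alpha_i$ for $i\in I_-$ and $\tau_-(-\alpha_i)=-\alpha_i$ for $i\in I_+$, and the identity $t_\varepsilon\alpha_i=-\alpha_i$ whenever $i\in I_\varepsilon$. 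In the generic case (iv) one has $\tau_+\alpha=t_+\alpha$ (still positive, as $\alpha$ has support meeting $I_-$) and $\tau_-(t_+\alpha)=t_-t_+\alpha$, so $t_-(\tau_-\tau_+\alpha)=t_-^2 t_+\alpha=t_+\alpha$; meanwhile $t_-\alpha$ is a positive root distinct from every $\beta_j^t$, giving $\tau_t(t_-\alpha)=t\cdot t_-\alpha=t_+\alpha$ via $t=t_+t_-$ and $t_-^2=1$. The remaining boundary sub-cases, where $t_-\alpha$ happens to coincide with some $\beta_j^t$, are handled individually with the same identities.

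For the compatibility assertion I would introduce $G(\alpha,\gamma):=(t_-\alpha\,||\,t_-\gamma)_t$ on $\Phi_{\ge-1}\times\Phi_{\ge-1}$. The intertwining combined with $\tau_t$-invariance of $(\bullet||\bullet)_t$ gives immediately
\[
G(\tau_-\tau_+\alpha,\tau_-\tau_+\gamma)=(\tau_t t_-\alpha\,||\,\tau_t t_-\gamma)_t=G(\alpha,\gamma),
\]
so $G$ is $\tau_-\tau_+$-invariant. I then verify the initial condition $G(-\alpha_i,\gamma)=[\gamma;\alpha_i]_+$. For $i\in I_+$ this is immediate: $t_-(-\alpha_i)=-\beta_i^t$ reduces the left side to $[t_-\gamma;\alpha_i]_+$, and $t_-$ does not modify the $\alpha_i$-coefficient for $i\in I_+$. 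For $i\in I_-$ we have $t_-(-\alpha_i)=\beta_i^t$; using $\tau_t(\beta_i^t)=-\beta_i^t$ and the intertwining I obtain
\[
G(-\alpha_i,\gamma)=[\tau_t t_-\gamma;\alpha_i]_+=[t_-\tau_-\tau_+\gamma;\alpha_i]_+,
\]
and a short case analysis on $\gamma$ (generic positive, $\gamma=-\alpha_j$, or $\gamma=\alpha_j$ with $j\in I_+$) shows this simplifies to $[t_+\gamma;\alpha_i]_+=[\gamma;\alpha_i]_+$, since $t_+$ preserves $\alpha_i$-coefficients for $i\in I_-$.

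To conclude that $G$ equals $(\bullet||\bullet)_{\ge-1}$ I need uniqueness of a $\tau_-\tau_+$-invariant function matching these initial values, which reduces to checking that every $\tau_-\tau_+$-orbit on $\Phi_{\ge-1}$ meets $\{-\alpha_i\}_{i\in I}$. This holds because the $\langle\tau_+,\tau_-\rangle$-orbits cover $\Phi_{\ge-1}$ and each contains some $-\alpha_i$, which is fixed by either $\tau_+$ (if $i\in I_-$) or $\tau_-$ (if $i\in I_+$); the existence of such a reflection fixed point prevents the dihedral orbit from splitting into two cyclic sub-orbits, so the $\tau_-\tau_+$-orbit coincides with the $\langle\tau_+,\tau_-\rangle$-orbit. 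The main obstacle I foresee is the careful bookkeeping in the exceptional sub-cases of the intertwining argument: wherever $\tau_\pm$ diverges from the naive $t_\pm$ action, one must invoke the precise description of $\beta_i^t$ to close the computation.
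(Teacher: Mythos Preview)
Your proposal is correct and follows essentially the same route as the paper: a case-by-case verification of the intertwining identity, followed by checking that the transported function $G$ matches the initial conditions $(-\alpha_i\,||\,\gamma)_{\ge-1}=[\gamma;\alpha_i]_+$, splitting on $i\in I_+$ versus $i\in I_-$ and, in the latter case, on the shape of $\gamma$.

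Two minor remarks. First, your four-case split for the intertwining is slightly finer than needed: the paper absorbs your case (ii) ($\alpha=-\alpha_i$, $i\in I_+$) into the generic case, since there $t_-\alpha=-\beta_i^t$ is negative and hence not any $\beta_j^t$, so the generic formula $\tau_t(t_-\alpha)=t\,t_-\alpha=t_+\alpha$ applies directly. Likewise, the ``remaining boundary sub-cases'' you flag in (iv) are empty: if $\alpha\in\Phi_+$ with $\alpha\neq\alpha_j$ for $j\in I_+$, then $t_-\alpha=\beta_j^t$ would force $\alpha=\alpha_j$ (for $j\in I_+$) or $\alpha=-\alpha_j$ (for $j\in I_-$), both excluded. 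Second, your explicit justification that every $\tau_-\tau_+$-orbit meets $\{-\alpha_i\}_{i\in I}$---via the reflection fixed point preventing the dihedral orbit from splitting---is a point the paper passes over in silence; making it explicit is a genuine improvement in rigor.
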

\begin{proof}
  Proceed by direct inspection;
	\begin{itemize}
		\item 
			if $t_-\alpha=\alpha_i=\beta_i^t$ for $i\in I_-$, that is if
			$\alpha=-\alpha_i$ with $i\in I_-$, then
  		$$
  			\tau_t(t_-(-\alpha_i))=
 			  \tau_t(\beta_i^t)=
			  -\beta_i^t=
 				-\alpha_i=
			  t_-\alpha_i=
			  t_-\tau_-\tau_+(-\alpha_i)
		  $$
			
		\item
			if $t_-\alpha=\beta_i^t$ for $i\in I_+$, i.e. if $\alpha=\alpha_i$ with
			$i\in I_+$, then
  		$$
  			\tau_t(t_-\alpha_i)=
  			\tau_t(\beta_i^t)=
			  -\beta_i^t=
			  -t_-\alpha_i=
			  t_-(-\alpha_i)=
			  t_-\tau_-\tau_+(\alpha_i)
		  $$

		\item
			in any other case 
  		$$
  			\tau_t(t_-\alpha)=
			  tt_-\alpha=t_+\alpha=
			  t_-t_-t_+\alpha=
			  t_-\tau_-\tau_+(\alpha)
				\,.
		  $$

	\end{itemize}
  To conclude the proof it is enough to show that 
  $$
  	\left(-\alpha_i||\gamma  \right)_{\ge-1}=
  	\left( -t_-\alpha_i||t_-\gamma \right)_t
  $$
	for any $\gamma\in\Phi_{\ge-1}$ and any $i\in I$. If $i$ is in $I_+$ then
  $$
  	\left( -\alpha_i||\gamma \right)_{\ge-1}=
  	\left[ \gamma;\alpha_i \right]_+=
  	\left[ t_-\gamma;\alpha_i \right]_+=
  	\left( -\beta_i^t||t_-\gamma \right)_t=
  	\left( t_-(-\alpha_i)||t_-\gamma \right)_t
  $$
	where the second equality holds because $t_-$ does not contain $s_i$.  If
	$i\in I_-$ then, on the one hand we have 
  $
  	\left( -\alpha_i||\gamma \right)_{\ge-1}=
  	\left[ \gamma;\alpha_i \right]_+
  $
  on the other
	\begin{align*}
  	\left( t_-(-\alpha_i)||t_-\gamma \right)_t=
		\left( \alpha_i||t_-\gamma \right)_t=
  	\left( \tau_t\alpha_i||\tau_tt_-\gamma \right)_t=
  	\left( -\beta_i^t||\tau_tt_-\gamma \right)_t=
  	\left[ \tau_tt_-\gamma;\alpha_i \right]_+.
	\end{align*}
  Now there are three cases:
	\begin{enumerate}
		\item 
  		if $\gamma$ is $\alpha_j$ with $j\in I_+$ then
  		$
				\tau_tt_-\gamma=
				\tau_t(\beta_j^t)=
				-\beta_j^t
			$
  		and
  		$$
  			\left[ \alpha_j;\alpha_i \right]_+=
  			0=
  			\left[-\beta_j^t;\alpha_i \right]_+
				\,.
  		$$

		\item
  		If $\gamma$ is $-\alpha_j$ with $j\in I_-$ then
  		$$
				\tau_tt_-\gamma=
				\tau_t(\beta_j^t)=
				-\beta_j^t=
				-\alpha_j=
				\gamma
				\,.
		  $$

		\item
  		For any other $\gamma$ we have  
  		$
  			\tau_tt_-\gamma=
				tt_-\gamma=
				t_+\gamma
  		$
  		and
  		$$
  			\left[ t_+\gamma;\alpha_i \right]_+=
  			\left[ \gamma;\alpha_i \right]_+
  		$$
  		since $s_i$ does not appear in $t_+$.

	\end{enumerate}

\end{proof}

Since the map $t_-$ is linear, all the properties of $\Phi_{\ge-1}$ translate to
$\Phi_{\ap}(t)$:
\begin{corollary}$\,$
	\label{cor:bipartite-summary_1}
\begin{enumerate}
  \item 
    For any $\alpha$ and $\gamma$ in $\Phi_{\ap}(t)$ we have $\left(
    \alpha||\gamma \right)_t=0$ if and only if $\left( \gamma||\alpha
    \right)_t=0$.
		\label{cor:bipartite-symmetry}

  \item 
		For any $J\subset I$ and any pair of roots $\alpha$ and $\gamma$ in
		$\Phi^J_{\ap}(t_J)$
    $$
    	\left( \iota \alpha||\iota \gamma \right)_t=
    	\left( \alpha||\gamma \right)^J_{t_J}
			\,.
    $$
    \label{cor:bipartite-inductive-property}

	\item 
		Each $t$-cluster in the simplicial complex $\Delta^\Phi_t$ is a
		$\mathbb{Z}$-basis of the root lattice $Q$.  

	\item
		Any $\gamma\in Q$ admits a unique $t$-cluster expansion. That is,
		$\gamma$ can be uniquely written as
		$$
			\gamma=\sum_{\alpha\in\Phi_{\ap}(t)}m_\alpha\alpha
		$$
		so that all the coefficients $m_\alpha$ are non negative integers and $m_\alpha
		m_{\alpha'}=0$ whenever $\left(\alpha||\alpha' \right)_t\neq0$.
		\label{cor:bipartite-cluster-expansion}
  \item  
		The set $\mathcal{F}_t$ is a complete simplicial fan in $Q_\mathbb{R}$.
\end{enumerate}
\end{corollary}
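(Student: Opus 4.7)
The plan is to deduce all five items from the corresponding statements for $\Phi_{\ge-1}$, recalled in the two propositions just before the corollary, by transporting them through the linear bijection $t_-\colon Q_{\mathbb{R}}\to Q_{\mathbb{R}}$. The three properties of $t_-$ that do all the work are that it is an automorphism of the root lattice $Q$, that it restricts to a bijection between $\Phi_{\ge-1}$ and $\Phi_{\ap}(t)$, and that it preserves the compatibility degree. Items (1), (3), (4) and (5) follow almost mechanically; only item (2) requires an additional observation.

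For (1), rewrite $(\alpha||\gamma)_t=(t_-^{-1}\alpha||t_-^{-1}\gamma)_{\ge-1}$ and appeal to Proposition~3.3(2) of \cite{y-system}. For (3), each $t$-cluster $C\subset\Phi_{\ap}(t)$ is the $t_-$-image of a cluster $C'\subset\Phi_{\ge-1}$, because $t_-$ bijects almost-positive roots and preserves compatibility; since $C'$ is a $\mathbb{Z}$-basis of $Q$ by Theorem~1.8 of \cite{y-system} and $t_-$ is a $\mathbb{Z}$-linear automorphism of $Q$, so is $C=t_-(C')$. For (4), given $\gamma\in Q$, let $\gamma'=t_-^{-1}(\gamma)$, apply Theorem~3.11 of \cite{y-system} to get the ordinary cluster expansion of $\gamma'$ in $\Phi_{\ge-1}$, and push it forward through $t_-$; the same procedure in reverse gives uniqueness. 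Item (5) is then formal: parts (3) and (4) together say that the cones generated by $t$-clusters are simplicial and tile $Q_{\mathbb{R}}$ with disjoint relative interiors, which is precisely a complete simplicial fan.

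Item (2) is more subtle because the compositions $\iota\circ(t_J)_-$ and $t_-\circ(\text{inclusion})$ from $\Phi^J_{\ge-1}$ to $\Phi_{\ap}(t)$ agree on negative roots (both send $-\alpha_i\mapsto-\beta_i^t$ for $i\in J$) but disagree on a generic positive root: the factorization $t_-=(t_J)_-\cdot\prod_{i\in I_-\setminus J}s_i$ shows that an extra product of reflections indexed by $I_-\setminus J$ may displace a positive root supported on $J$ by simple roots outside $J$. My plan is to recognize $(\iota\bullet||\iota\bullet)_t$ on $\Phi^J_{\ap}(t_J)\times\Phi^J_{\ap}(t_J)$ as the unique compatibility degree $(\bullet||\bullet)^J_{t_J}$ by checking the initial conditions and then invoking uniqueness. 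The initial-condition step is clean: for $i\in J$ and any $\gamma\in\Phi^J_{\ap}(t_J)$,
\[
(\iota(-\beta_i^{t_J})||\iota\gamma)_t=(-\beta_i^t||\iota\gamma)_t=[\iota\gamma;\alpha_i]_+=[\gamma;\alpha_i]_+=(-\beta_i^{t_J}||\gamma)^J_{t_J},
\]
where the third equality uses that $\iota$ modifies $\gamma$ only on its negative part via $-\beta_j^{t_J}\mapsto-\beta_j^t$, and the difference $\beta_j^t-\beta_j^{t_J}$ is supported on $I_-\setminus J$, hence has zero $\alpha_i$-coefficient whenever $i\in J$. The main obstacle will be the propagation step: one needs the perturbation between $t_-$ and $(t_J)_-$ on positive roots to be absorbed by the extra $\langle\tau_+,\tau_-\rangle$-invariance of $(\bullet||\bullet)_{\ge-1}$, which after transport through $t_-$ endows $(\iota\bullet||\iota\bullet)_t$ with a symmetry group strictly larger than $\langle\tau_t\rangle$; the desired identity then follows by the standard uniqueness of a $\tau$-invariant function with the prescribed initial conditions.
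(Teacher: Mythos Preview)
Your treatment of items (1), (3), (4) and (5) is correct and essentially identical to the paper's: it also declares these to be immediate consequences of the linearity of $t_-$ and of the corresponding facts for $\Phi_{\ge-1}$.

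For item (2), your initial-condition computation is fine, but the propagation step has a genuine gap. To invoke ``standard uniqueness'' you would need to know that the function $(\alpha,\gamma)\mapsto(\iota\alpha\,||\,\iota\gamma)_t$ on $\Phi^J_{\ap}(t_J)\times\Phi^J_{\ap}(t_J)$ is $\tau_{t_J}$-invariant. But $\iota$ does \emph{not} intertwine $\tau_{t_J}$ with $\tau_t$: a positive root $\alpha$ supported on $J$ may equal some $\beta_i^t$ with $i\notin J$, in which case $\tau_t(\iota\alpha)=-\beta_i^t$ while $\iota(\tau_{t_J}\alpha)=\iota(t_J\alpha)$ is a positive root; and even when this does not happen, $t\alpha$ and $t_J\alpha$ differ in general. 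Your appeal to the extra $\langle\tau_+,\tau_-\rangle$-symmetry of $(\bullet||\bullet)_{\ge-1}$ is suggestive but does not produce the required invariance on the $\Phi^J_{\ap}(t_J)$ side; as written it is a hope rather than an argument.

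The paper avoids this altogether with a more direct route. It first reduces to $I\setminus J=\{j\}$, then uses Proposition~\ref{prop:inverse-invariance} (the $c\leftrightarrow c^{-1}$ symmetry) to assume $j\in I_+$. Under that assumption $s_j$ does not occur in $t_-$, and one checks case by case that $t_-\iota(\alpha)\in\Phi^J_{\ge-1}$ for every $\alpha\in\Phi^J_{\ap}(t_J)$: positive roots stay supported on $J$, $-\beta_i^{t_J}$ with $i\in I_-$ goes to $\alpha_i$, and $-\beta_i^{t_J}$ with $i\in I_+\setminus\{j\}$ goes to $-\alpha_i$. Once $t_-\iota$ lands in $\Phi^J_{\ge-1}$, the chain
\[
(\iota\alpha\,||\,\iota\gamma)_t
=(t_-\iota\alpha\,||\,t_-\iota\gamma)_{\ge-1}
=(t_-\iota\alpha\,||\,t_-\iota\gamma)^J_{\ge-1}
=(\alpha\,||\,\gamma)^J_{t_J}
\]
uses only the restriction property of $(\bullet||\bullet)_{\ge-1}$ already quoted. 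This sidesteps any need to compare $\tau_{t_J}$ with $\tau_t$.
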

\begin{proof}
	The only non trivial claim is (\ref{cor:bipartite-inductive-property}); it is
	enough to show that it holds when $I\setminus J=\left\{ j \right\}$. Since $t$
	is bipartite, $s_j$ is either initial or final (cf.
	(\ref{eqn:bipartite-coxeter})). Using Proposition
	\ref{prop:inverse-invariance} we can assume it is initial, i.e.  $j\in I_+$.
	We have then
  $$
  	\left( \iota \alpha||\iota \gamma \right)_t=
  	\left( t_-\iota \alpha||t_-\iota \gamma  \right)_{\ge-1}=
  	\left( t_-\alpha||t_-\gamma \right)_{\ge-1}^J=
  	\left( \alpha||\gamma \right)^J_{t_J}
  $$
	where the second equality holds since, for any root $\alpha$ in
	$\Phi_{\ap}^J(t_J)$,
	$$
		\left[ t_-\iota(\alpha);\alpha_j \right]=0
	$$
	hence $t_-\iota(\alpha)$ is in $\Phi^J$.  Indeed if $\alpha$ is positive then
	$t_-\alpha$ contains $\alpha_j$ only if $\alpha$ does since $s_j$ does not
	appear in $t_-$; if $\alpha=-\beta_i^{t_J}$ with $i\in I_-$ then
	$t_-\iota(-\beta_i^{t_J})=\alpha_i$ and finally if $\alpha=-\beta_i^{t_J}$
	with $i\in I_+\setminus\left\{ j \right\}$ then
	$t_-\iota(-\beta_i^{t_J})=-\alpha_i$.

\end{proof}
\begin{corollary}
  \label{cor:bipartite-summary_2}
	Suppose that $I$ has at least $2$ vertices.  If $\alpha$ and $\gamma$ in
	$\Phi_{\ap}(t)$ are  such that $\left( \alpha||\gamma \right)_t=1=\left(
	\gamma||\alpha	\right)_t$, then 
	\begin{enumerate}
		\item 
			The set 
			$$
				\left\{ \tau_t^m\left( \tau_t^{-m}(\alpha)+\tau_t^{-m}(\gamma) \right)
				\right\}_{m\in\mathbb{Z}}
			$$
			consists of exactly two elements $\alpha+\gamma$ and
			$\alpha\uplus_t\gamma$.
			\label{cor:bipartite-2-elements}

		\item
			Any root appearing with a positive coefficient in the cluster expansion of $\alpha+\gamma$ or
			$\alpha\uplus_t\gamma$ is compatible with both $\alpha$, $\gamma$, and with
			any other root compatible with both $\alpha$ and $\gamma$.
			\label{cor:bipartite-compatible-expansion}

		\item
			Let $f:I\rightarrow\mathbb{R}_+$ be any function such that, for any
			$i\in I$,
			$$
				f(i)=f(i^*)
			$$
			and
			$$
				\sum_{i\in I}a_{ij}f(i)>0
			$$
			for any $j\in I$. Let $F_t:Q_\mathbb{R}\rightarrow\mathbb{R}$ be a
			contiuous piecewise-linear function on $Q_\mathbb{R}$ that is linear on
			the maximal cones of $\mathcal{F}_t$, invariant under the action of
			$\tau_t$, and such that 
			$$
				F_t(-\beta_i^t)=f(i)
				\,.
			$$
			Then,
			$$
				F_t(\alpha)+F_t(\gamma)>\max\left\{
				F_t(\alpha+\gamma),F_t(\alpha\uplus\gamma) \right\}
				\,.
			$$
			\label{cor:bipartite-inequalities}
	\end{enumerate}
\end{corollary}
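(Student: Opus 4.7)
The plan is to transport each of (1), (2), (3) from $\Phi_{\ge-1}$ to $\Phi_{\ap}(t)$ via the linear involution $t_-$, which by the previous propositions is a bijection between $\Phi_{\ge-1}$ and $\Phi_{\ap}(t)$, preserves compatibility degree, and intertwines $\tau_t$ with $\tau_-\tau_+$. First I would set $\alpha' := t_-(\alpha)$ and $\gamma' := t_-(\gamma)$ in $\Phi_{\ge-1}$; preservation of the compatibility degree gives $(\alpha'||\gamma')_{\ge-1} = 1 = (\gamma'||\alpha')_{\ge-1}$, so the bipartite proposition recalled from \cite{associahedra} applies to the pair $(\alpha',\gamma')$.

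For (1), linearity of $t_-$ together with the intertwining property yields
\begin{equation*}
t_-\bigl(\tau_t^m(\tau_t^{-m}\alpha + \tau_t^{-m}\gamma)\bigr) = (\tau_-\tau_+)^m\bigl((\tau_-\tau_+)^{-m}\alpha' + (\tau_-\tau_+)^{-m}\gamma'\bigr)
\end{equation*}
for every $m \in \mathbb{Z}$, so the set in question is the $t_-$-preimage of the $\langle\tau_-\tau_+\rangle$-sub-orbit sitting inside the two-element $T$-orbit $\{\alpha'+\gamma',\,\alpha'\uplus\gamma'\}$ provided by Theorem 1.14 of \cite{associahedra}. The value $\alpha+\gamma$ is obtained at $m=0$; once I show that a second value is reached, I set $\alpha \uplus_t \gamma := t_-(\alpha'\uplus\gamma')$. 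For (2), since $t_-$ is a linear bijection preserving compatibility degree, the $t$-cluster expansion of any $v \in Q$ coincides with the $t_-$-image of the cluster expansion of $t_-(v)$ in $\Phi_{\ge-1}$; applying this to $\alpha+\gamma$ and $\alpha\uplus_t\gamma$ and transporting the bipartite compatibility statement through $t_-$ delivers (2).

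For (3), define $F_{\ge-1} := F_t \circ t_-$. This is continuous and piecewise-linear on $\mathcal{F}_{\ge-1}$ because $t_-$ is linear and sends maximal cones of $\mathcal{F}_{\ge-1}$ bijectively to maximal cones of $\mathcal{F}_t$; it is $\tau_-\tau_+$-invariant by the intertwining property and the $\tau_t$-invariance of $F_t$; and it satisfies $F_{\ge-1}(-\alpha_i) = f(i)$ for every $i$ by the case analysis $i \in I_\pm$ already carried out in the proof of Corollary \ref{cor:bipartite-summary_1}. The residual invariance under a single $\tau_\varepsilon$ needed to invoke Lemma 2.4 of \cite{associahedra} reduces to checking it on the generators $-\alpha_i$, which is immediate because $\tau_-\tau_+$-invariance already identifies $\alpha_i$ with $-\alpha_i$ when $i\in I_-$. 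Once $F_{\ge-1}$ meets those hypotheses, the resulting strict inequality transports back to $F_t$ via $F_t(v) = F_{\ge-1}(t_-v)$ and linearity of $t_-$.

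The main obstacle I expect is the step in (1) that the cyclic $\langle\tau_t\rangle$-sub-orbit of $(\alpha,\gamma)$ actually reaches the second point $t_-(\alpha'\uplus\gamma')$ rather than collapsing onto $\alpha+\gamma$ alone. This calls either for a direct inspection of how $\alpha'\uplus\gamma'$ is realized in \cite{associahedra}, checking that it can be written as $(\tau_-\tau_+)^{m_0}\bigl((\tau_-\tau_+)^{-m_0}\alpha' + (\tau_-\tau_+)^{-m_0}\gamma'\bigr)$ for some $m_0$ (i.e.\ achieved by an even-length word in $\tau_\pm$), or for a short separate argument excluding the collapse, after which everything else is routine bookkeeping along the dictionary supplied by $t_-$.
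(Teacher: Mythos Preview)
Your approach is exactly the paper's: the corollary is stated immediately after the sentence ``Since the map $t_-$ is linear, all the properties of $\Phi_{\ge-1}$ translate to $\Phi_{\ap}(t)$'' and is given \emph{no} separate proof. Transporting through the linear bijection $t_-$ is the entire argument the paper offers.

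You have in fact been more careful than the paper on one point. The passage from the dihedral group $T=\langle\tau_+,\tau_-\rangle$ in Theorem~1.14 of \cite{associahedra} to the cyclic group $\langle\tau_t\rangle\cong\langle\tau_-\tau_+\rangle$ is a genuine gap that the paper does not address: a priori the $\langle\tau_-\tau_+\rangle$-indexed subset could collapse to the single element $\alpha'+\gamma'$. Your two suggested resolutions are both viable. A concrete way to close it with tools already in the paper: the orbit count in Remark~\ref{rk:wheels} (transported through $t_-$) shows that $\langle\tau_-\tau_+\rangle$-orbits on $\Phi_{\ge-1}$ coincide with $T$-orbits, hence every $\langle\tau_-\tau_+\rangle$-orbit meets $-\Pi$, and one may reduce to $\alpha'=-\alpha_i$; a direct comparison of the values at $m=0$ and $m=\pm1$ then exhibits the second element, by exactly the mechanism isolated later in Lemma~\ref{lemma:sigma-is-tau}.

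Your argument for (3) is correct. The ``residual'' $\tau_\varepsilon$-invariance does reduce to checking $F_{\ge-1}(\alpha_i)=F_{\ge-1}(-\alpha_i)$, and this holds for \emph{all} $i$ (not only $i\in I_-$): one has $\tau_-\tau_+(-\alpha_i)=\alpha_i$ for $i\in I_-$ and $(\tau_-\tau_+)^{-1}(-\alpha_i)=\alpha_i$ for $i\in I_+$, so $\alpha_i$ and $-\alpha_i$ always lie in the same $\langle\tau_-\tau_+\rangle$-orbit. Part~(2) transports without obstruction, as you say.
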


\section{Some technical results}
\label{sec:technical-results}
As anticipated we need to lift elementary moves to the level of $\Phi_{\ap}(c)$.
We concentrate first on conjugation by initial simple reflections.  Fix the
Coxeter element $c=s_1\cdots s_n$ and consider the bijection
\begin{equation*}
	\sigma_1:\Phi_{\ap}(c)\longrightarrow\Phi_{\ap}(s_1cs_1)
\end{equation*}
defined by
\begin{equation}
	\sigma_1(\alpha):=
	\left\{ 
	\begin{array}[]{ll}
		\alpha_1(=\beta_1^{s_1cs_1}) & \mbox{if } \alpha=-\beta_1^c\\
		s_1\alpha & \mbox{otherwise.}
	\end{array}
	\right.
\end{equation}
Note that $\sigma_1$ sends $-\beta_i^c$ to $-\beta_i^{s_1cs_1}$ for any
$i\neq1$.

\begin{proposition}
	The map $\sigma_1$ intertwines $\tau_c$ and $\tau_{s_1cs_1}$, i.e., for any
	$\alpha$ in $\Phi_{\ap}(c)$, we have
	\begin{equation*}
		\tau_{s_1cs_1}\left( \sigma_1(\alpha) \right)=
		\sigma_1\left( \tau_c(\alpha) \right)
		\,.
	\end{equation*}
  Moreover it preserves the compatibility degree, i.e.
  for any $\alpha$ and $\gamma$ in $\Phi_{\ap}(c)$
	\begin{equation*}
		\left( \alpha||\gamma \right)_c=
		\left( \sigma_1\alpha||\sigma_1\gamma \right)_{s_1cs_1}.
	\end{equation*}
\end{proposition}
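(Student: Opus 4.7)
The plan is to verify the two claims by case analysis, exploiting that $\sigma_1$ coincides with the linear map $s_1$ everywhere except at $-\beta_1^c$, together with the operator identity $s_1 c = c' s_1$ (where $c':=s_1 c s_1$) that follows immediately from $s_1^2=1$. First I would unpack the effect of $\sigma_1$ on the negative part of $\Phi_{\ap}(c)$: using the reduced expression $c'=s_2 s_3 \cdots s_n s_1$, a direct computation from (\ref{eqn:root_basis}) gives $\beta_1^{c'}=\alpha_1$ and $\beta_i^{c'}=s_1\beta_i^c$ for $i\neq 1$, so that $\sigma_1(-\beta_1^c)=\beta_1^{c'}$ and $\sigma_1(-\beta_i^c)=-\beta_i^{c'}$ for $i\neq 1$. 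This is the basic bridge between the two labelings.

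To prove $\tau_{c'}\circ\sigma_1=\sigma_1\circ\tau_c$, I would split $\Phi_{\ap}(c)$ into three pieces: $\alpha=\pm\beta_1^c$, $\alpha=\pm\beta_i^c$ for $i\neq 1$, and $\alpha$ a positive root distinct from every $\beta_j^c$. In each piece both sides reduce to explicit formulas. The ``boundary'' pieces, where $\tau_c$ or $\tau_{c'}$ ceases to be multiplication by the Coxeter element, are settled by direct evaluation using short calculations such as $-c\beta_1^c=\alpha_1$ and $-c'\beta_1^{c'}=-\alpha_1$. The ``generic'' piece collapses to the operator identity above, giving $\sigma_1(\tau_c\alpha)=s_1 c\alpha=c' s_1\alpha=\tau_{c'}(\sigma_1\alpha)$; a small separate check ensures that in this case neither $c\alpha$ is $-\beta_1^c$ nor $s_1\alpha$ is some $\beta_j^{c'}$, so that the multiplication-by-Coxeter-element formula is legitimate on both sides.

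For the compatibility degree I would invoke the uniqueness characterization of $(\bullet||\bullet)_c$: the function
\[
    F(\alpha,\gamma):=\left(\sigma_1\alpha\,||\,\sigma_1\gamma\right)_{c'}
\]
on $\Phi_{\ap}(c)\times\Phi_{\ap}(c)$ agrees with $(\bullet||\bullet)_c$ as soon as $F$ is $\tau_c$-invariant and satisfies the same initial conditions $F(-\beta_i^c,\gamma)=[\gamma;\alpha_i]_+$. Invariance is immediate from the intertwining property together with the $\tau_{c'}$-invariance of $(\bullet||\bullet)_{c'}$. For $i\neq 1$, the identity $\sigma_1(-\beta_i^c)=-\beta_i^{c'}$ reduces the required equality to the elementary observation that $s_1 v$ and $v$ share the same $\alpha_i$-coefficient for $i\neq 1$, plus inspection of the boundary value $\sigma_1(-\beta_1^c)=\alpha_1$ and the fact that negative roots have nonpositive coefficients. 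For $i=1$, using $\tau_{c'}$-invariance and the intertwining rewrites $F(-\beta_1^c,\gamma)$ as $[\sigma_1\tau_c\gamma;\alpha_1]_+$, and a short case check (according as $\gamma$ is positive, $\gamma=-\beta_1^c$, or $\gamma=-\beta_j^c$ with $j\neq 1$) identifies this with $[\gamma;\alpha_1]_+$.

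The main obstacle is the $i=1$ initial condition: there the non-linearity of $\sigma_1$ at $-\beta_1^c$ and the ``flip'' performed by $\tau_c$ at $\beta_1^c$ interact, so one must track the $\alpha_1$-coefficient carefully through the composition $\sigma_1\circ\tau_c$ in each sub-case. The rest of the argument is routine, hinging on the identity $s_1 c=c' s_1$ and on the fact that $s_1$ preserves $\alpha_i$-coefficients for $i\neq 1$.
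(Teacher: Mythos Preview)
Your argument is correct, but it follows a different route from the paper's. The paper does not verify the two claims directly; instead it observes that $\sigma_1$ factors as the composite
\[
\Phi_{\ap}(c)\xrightarrow{\ \phi_c^{-1}\ }\Pi(c)\xrightarrow{\ \psi_{s_1cs_1,c}^{-1}\ }\Pi(s_1cs_1)\xrightarrow{\ \phi_{s_1cs_1}\ }\Phi_{\ap}(s_1cs_1),
\]
where $\phi_c=(c^{-1}-1)$ is the linear isomorphism of Proposition~\ref{prop:translation-pi-phi} and $\psi_{s_1cs_1,c}^{-1}$ is the bijection of \cite[Lemma~5.3]{shih}. Each of the three factors is already known to intertwine the relevant $\tau$-maps and to preserve compatibility degrees, so both conclusions follow immediately once the factorization is checked (which is a two-line calculation).

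Your approach is self-contained and more elementary: it bypasses $\Pi(c)$ entirely and avoids importing the properties of $\psi_{s_1cs_1,c}^{-1}$ from \cite{shih}, relying only on the operator identity $s_1c=c's_1$ and the explicit description $\beta_1^{c'}=\alpha_1$, $\beta_i^{c'}=s_1\beta_i^c$ for $i\neq 1$. The cost is a longer case analysis, especially for the $i=1$ initial condition, where you must track the $\alpha_1$-coefficient through $\sigma_1\circ\tau_c$ in several sub-cases (and the observation $s_1c=s_2\cdots s_n$ is what makes the positive-root sub-case go through cleanly). The paper's proof is shorter and more structural, but yours has the merit of making explicit exactly which root-system identities are being used.
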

\begin{proof}
	It suffices to notice that $\sigma_1$ is the composition 
	\begin{equation*}
		\Phi_{\ap}(c)
		\stackrel{\phi_c^{-1}}{\longrightarrow}
		\Pi(c)
		\stackrel{\psi_{s_1cs_1,c}^{-1}}{\longrightarrow}
		\Pi(s_1cs_1)
		\stackrel{\phi_{s_1cs_1}}{\longrightarrow}
		\Phi_{\ap}(s_1cs_1)
	\end{equation*}
	where $\psi_{s_1cs_1,c}^{-1}$ is the bijection 
	\begin{equation*}
		\psi_{s_1cs_1,c}^{-1}(\lambda):=
		\left\{
		\begin{array}{ll}
			-\omega_1 & \mbox{if }\lambda=\omega_1\\
			s_1\lambda & \mbox{otherwise}
		\end{array}
		\right.
	\end{equation*}
	defined by Lemma 5.3 in \cite{shih} and $\phi_c$ is the map of Proposition
	\ref{prop:translation-pi-phi}.  Indeed, if $\alpha\neq\alpha_1$ then
	\begin{align*}
		\phi_{s_1cs_1}\circ\psi_{s_1cs_1,c}^{-1}\circ\phi_c^{-1}\left(\alpha\right) =
		(s_1c^{-1}s_1-1)s_1(c^{-1}-1)^{-1}\alpha=
		s_1\alpha
	\end{align*}
	and 
	\begin{align*}
		\phi_{s_1cs_1}\circ\psi_{s_1cs_1,c}^{-1}\circ\phi_c^{-1}\left(\alpha_1\right) =
		(s_1c^{-1}s_1-1)(-\omega_1)=
		\alpha_1
		\,.
	\end{align*}

	The bijection $\sigma_1$ satisfies the desired property because all
	the maps that define it do.

\end{proof}

To use simultaneously induction on the rank of $I$ and 
elementary moves we need to prove some type of compatibility between $\sigma_1$
and $\iota$.  It suffices to inspect their interaction in the case when $\iota$
is induced removing only one node (say $i$) from $I$. 

\begin{proposition}
	For $J=I\setminus\left\{ i\right\}$ and $i\neq1$ let $c_J$ be the Coxeter
	element of $W_J$ obtained by deleting $s_i$ from $c$ and let  $\sigma_1^J$ be
	the map corresponding to the conjugation by $s_1$ in $W_J$. For any root
	$\alpha$ in $\Phi^J_{\ap}(c_J)$ we have
	\begin{equation*}
		\sigma_1(\iota_c(\alpha))=\iota_{s_1cs_1}(\sigma^J_1(\alpha))
		\,.
	\end{equation*}
\end{proposition}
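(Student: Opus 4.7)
The plan is a direct case analysis on $\alpha\in\Phi^J_{\ap}(c_J)$, using the explicit piecewise formulas for $\sigma_1$, $\sigma_1^J$, $\iota_c$, and $\iota_{s_1cs_1}$. The key preliminary observation is that, since $c=s_1s_2\cdots s_n$ begins with $s_1$ and the hypothesis $i\neq 1$ forces $1\in J$, the Coxeter element $c_J$ also begins with $s_1$. Writing $c=s_1c'$ and $c_J=s_1c_J'$, we get $s_1cs_1=c's_1$ and $s_1c_Js_1=c_J's_1$, so $s_1$ becomes the \emph{final} reflection of both conjugated Coxeter elements. Formula~(\ref{eqn:root_basis}) then gives $\beta_1^{s_1cs_1}=\alpha_1=\beta_1^{s_1c_Js_1}$, and for $j\neq 1$ it shows that $-s_1\beta_j^c=-\beta_j^{s_1cs_1}$ (and similarly inside $W_J$).

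With these identifications, the verification splits into three cases. If $\alpha=-\beta_1^{c_J}$, then on the left $\iota_c(\alpha)=-\beta_1^c$ and $\sigma_1(-\beta_1^c)=\alpha_1$, while on the right $\sigma_1^J(\alpha)=\alpha_1$ and $\iota_{s_1cs_1}(\alpha_1)=\alpha_1$. If $\alpha=-\beta_j^{c_J}$ for some $j\in J\setminus\{1\}$, both compositions give $-\beta_j^{s_1cs_1}$: the left via $\sigma_1(\iota_c(\alpha))=\sigma_1(-\beta_j^c)=-\beta_j^{s_1cs_1}$, the right via $\iota_{s_1cs_1}(\sigma_1^J(\alpha))=\iota_{s_1cs_1}(-\beta_j^{s_1c_Js_1})=-\beta_j^{s_1cs_1}$. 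If $\alpha\in\Phi^J_+$, then $\iota_c(\alpha)=\alpha$, and since $\alpha$ is positive while $-\beta_1^c$ is negative, $\sigma_1(\iota_c(\alpha))=s_1\alpha$; similarly $\sigma_1^J(\alpha)=s_1\alpha$, and whenever $s_1\alpha$ is still a positive root the equality reduces to $\iota_{s_1cs_1}(s_1\alpha)=s_1\alpha$.

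The only delicate sub-case is $\alpha=\alpha_1$, for which $s_1\alpha=-\alpha_1$ is negative and so must be re-interpreted as a $c$-almost-positive root on each side. On the right, $\sigma_1^J(\alpha_1)=-\alpha_1$ sits in $\Phi^J_{\ap}(s_1c_Js_1)$ as $-\beta_1^{s_1c_Js_1}$ by the preliminary observation, and applying $\iota_{s_1cs_1}$ returns $-\beta_1^{s_1cs_1}=-\alpha_1$; on the left, $\sigma_1(\iota_c(\alpha_1))=\sigma_1(\alpha_1)=-\alpha_1$ directly. The two sides agree because the identifications $\beta_1^{s_1cs_1}=\alpha_1=\beta_1^{s_1c_Js_1}$ occur in parallel in both Weyl groups, which is exactly what the hypothesis $i\neq 1$ guarantees. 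This sub-case is the only non-formal step; every other case is an immediate unpacking of definitions.
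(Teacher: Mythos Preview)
Your proof is correct and follows essentially the same three-case analysis as the paper's own proof. You spell out the preliminary identification $\beta_1^{s_1cs_1}=\alpha_1=\beta_1^{s_1c_Js_1}$ and the sub-case $\alpha=\alpha_1$ more explicitly than the paper does (the paper handles this in a single parenthetical remark at the end of case~(3)), but the argument is the same.
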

\begin{proof}
  There are three cases to be considered.
  \begin{enumerate}
    \item 
      If $\alpha=-\beta_1^{c_J}$ then
      $$
        \sigma_1\left( \iota_c(-\beta_1^{c_J}) \right)=
        \sigma_1\left( -\beta_1^c \right)=
        \alpha_1=
				\iota_{s_1cs_1}(\alpha_1)=
				\iota_{s_1cs_1}\left( \sigma^J_1(-\beta_1^{c_J}) \right)
				\,.
      $$

    \item
			If $\alpha=-\beta_j^{c_J}$ and $j\neq1$ then
			$\iota_c(-\beta_j^{c_J})=-\beta_j^c$ therefore
			\begin{align*}
        \sigma_1\left( \iota_c(-\beta_j^{c_J}) \right)=
				-\beta_j^{s_1cs_1}=
				\iota_{s_1cs_1}\left( -\beta_j^{s_1c_J s_1} \right)=
				\iota_{s_1cs_1}\left( \sigma^J_1(-\beta_j^{c_J}) \right)
				\,.
			\end{align*}

    \item 
      If $\alpha$ is positive
      $$
        \sigma_1\left( \iota_c(\alpha) \right)=
				\sigma_1(\alpha)=
				s_1\alpha=
				\iota_{s_1cs_1}\left( s_1\alpha \right)=
				\iota_{s_1cs_1}\left( \sigma^J_1(\alpha) \right)
				\,.
      $$
			The last equality holds since, $\alpha$ being positive,
			$s_1\alpha\neq\alpha_1$ and the third because if $s_1\alpha$ is not
			positive then $\alpha=\alpha_1$ and $-\beta_1^{s_1cs_1}=-\beta_1^{s_1c_J
			s_1}=-\alpha_1$.
        
  \end{enumerate}

\end{proof}

\begin{remark}
	The definition of $\sigma_1$ can be replicated to get the maps $\sigma_i$
	corresponding to conjugation by any initial simple reflection $s_i$.  It is
	clear that, to get the maps corresponding to elementary moves that conjugate
	$c$ by a final simple reflection, it suffices to consider the inverses 
	$\sigma_i^{-1}$. 
\end{remark}

As a  first application of the elementary moves let us show that the definition
of $c$-compatible pair of roots make sense.

\begin{lemma}
	For any $\alpha$ and $\gamma$ in $\Phi_{\ap}(c)$
	\begin{equation*}
		\left( \alpha||\gamma \right)_c=0
		\Leftrightarrow
		\left( \gamma||\alpha \right)_c=0
		\,.
	\end{equation*}
	\label{lemma:symmetry}
\end{lemma}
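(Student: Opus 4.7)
The plan is to reduce the symmetry statement to the bipartite case, where it is already established as Corollary~\ref{cor:bipartite-summary_1}(\ref{cor:bipartite-symmetry}), by transporting the property along elementary moves. By Lemma~\ref{lemma:coxeter-connected}, there exists a bipartite Coxeter element $t$ and a finite sequence
$$c=c_0,\;c_1,\;\ldots,\;c_N=t$$
of Coxeter elements in which every consecutive pair is related by conjugation by an initial (equivalently, final) simple reflection.

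For each elementary move $c_k \mapsto c_{k+1}$, the proposition above (combined with the remark that the construction of $\sigma_1$ replicates for any initial simple reflection, and with the use of inverses for final reflections) yields a bijection
$$\sigma^{(k)}:\Phi_{\ap}(c_k)\longrightarrow\Phi_{\ap}(c_{k+1})$$
which intertwines the $\tau$-maps and, crucially, preserves the compatibility degree. Composing these yields a bijection
$$\sigma:=\sigma^{(N-1)}\circ\cdots\circ\sigma^{(0)}:\Phi_{\ap}(c)\longrightarrow\Phi_{\ap}(t)$$
which preserves the compatibility degree: for all $\alpha,\gamma\in\Phi_{\ap}(c)$,
$$\left(\alpha||\gamma\right)_c=\left(\sigma\alpha||\sigma\gamma\right)_t.$$

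Applying this identity twice and invoking the already-established symmetry for the bipartite Coxeter element $t$, we obtain
$$\left(\alpha||\gamma\right)_c=0 \;\Longleftrightarrow\; \left(\sigma\alpha||\sigma\gamma\right)_t=0 \;\Longleftrightarrow\; \left(\sigma\gamma||\sigma\alpha\right)_t=0 \;\Longleftrightarrow\; \left(\gamma||\alpha\right)_c=0,$$
which is the desired equivalence. There is no real obstacle here: since we only need preservation of the compatibility degree (not of any linear structure), the non-linearity of the maps $\sigma^{(k)}$ is irrelevant, and the argument is just a straightforward transport along the chain of elementary moves reducing to the bipartite case.
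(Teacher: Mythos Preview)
Your proof is correct and follows essentially the same strategy as the paper: both reduce to the bipartite case (Corollary~\ref{cor:bipartite-summary_1}(\ref{cor:bipartite-symmetry})) via the elementary-move maps $\sigma_i^{\pm1}$, using that these preserve the compatibility degree and that any Coxeter element is connected to a bipartite one by Lemma~\ref{lemma:coxeter-connected}. The only difference is presentational---the paper phrases it as an inductive step (``the property is preserved under $\sigma_i^{\pm1}$'') while you explicitly compose the chain of bijections---but the content is identical.
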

\begin{proof}
	If $c$ is bipartite the statement is true by point
	\ref{cor:bipartite-symmetry} in Corollary \ref{cor:bipartite-summary_1}.  It
	is then  enough to show that the property is preserved under
	$\sigma_i^{\pm1}$. Suppose that it holds for $c=s_1\cdots s_n$.  
	If $\left(\alpha||\gamma \right)_{s_1cs_1}=0$ then 
	$$
		\left(\sigma_1^{-1}\alpha||\sigma_1^{-1}\gamma \right)_c=
		0=
		\left( \sigma_1^{-1}\gamma||\sigma_1^{-1}\alpha \right)_c
		\,.
	$$
	Therefore	$\left( \gamma || \alpha\right)_{s_1cs_1}=0$.

\end{proof}

Our next goal is to show that ``distant'' roots are compatible.  We need to
introduce some terminology.  For any positive root $\alpha$ define its support
to be the set
\begin{equation}
	\supp(\alpha):=\left\{ i\in I \Big| \left[ \alpha;\alpha_i \right]\neq 0 \right\}
\end{equation}
and extend the definition to $\Phi_{\ap}(c)$ declaring
\begin{equation}
	\supp(-\beta_i^c):=\left\{ i \right\}
	\,.
\end{equation}

\begin{remark}
	If $\alpha$ and $\gamma$ are roots with supports contained in two different
	connected components of $I$ then $\left( \alpha||\gamma \right)_c=0$  since
	$\tau_c$ preserves connected components. 
	\label{rk:connected-component-compatibility}
\end{remark}

We can improve on Remark \ref{rk:connected-component-compatibility}.
\begin{definition}
	Call two roots $\alpha$ and $\gamma$ \emph{spaced} if, for any
	$i\in\supp(\alpha)$ and for any $j\in\supp(\gamma)$, $a_{ij}=0$. 
\end{definition}

\begin{remark}
	Note that if $\left( -\beta_i^c||\alpha \right)_c\neq0$ then $\alpha$ and
	$-\beta_i^c$ are not spaced.
	\label{rk:compatibility-of-negative}
\end{remark}

\begin{proposition}
	Let $\alpha$ and $\gamma$ be roots in $\Phi_{\ap}(c)$. If $\alpha$ and
	$\gamma$ are spaced then 
  $$
		\left( \alpha||\gamma \right)_c=0.
	$$
	\label{prop:spaced-compatibility}
\end{proposition}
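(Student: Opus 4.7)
My plan is to reduce the statement to Remark \ref{rk:connected-component-compatibility} by restricting to the subdiagram spanned by the union of the two supports and invoking Proposition \ref{prop:inductive-property}, exactly in the spirit of the inductive arguments advertised earlier in the paper.

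First I would observe that, since $a_{ii}=2\neq 0$ for every $i\in I$, the spaced hypothesis forces $\supp(\alpha)\cap\supp(\gamma)=\emptyset$; moreover, no edge of $I$ can join a vertex of $\supp(\alpha)$ to a vertex of $\supp(\gamma)$. Set $J:=\supp(\alpha)\cup\supp(\gamma)$, regarded as a full subdiagram of $I$. By the previous observation $J$ splits as a disjoint union of two edge-disconnected subdiagrams $J_\alpha\supseteq\supp(\alpha)$ and $J_\gamma\supseteq\supp(\gamma)$.

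Next I would check that both $\alpha$ and $\gamma$ lie in the image of the twisted inclusion $\iota\colon\Phi_{\ap}^J(c_J)\hookrightarrow\Phi_{\ap}(c)$: a positive root whose support is contained in $J$ coincides with its image under $\iota$, while a negative generator $-\beta_i^c$ with $i\in J$ equals $\iota(-\beta_i^{c_J})$ by the very definition of $\iota$. Denote the corresponding preimages by $\alpha'$ and $\gamma'$. From the definitions one has $\supp(\alpha')\subseteq J_\alpha$ and $\supp(\gamma')\subseteq J_\gamma$ inside $J$, and each of these supports is connected (trivially in the negative case, and by the well-known connectedness of supports of positive roots in the other case).

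Applying Proposition \ref{prop:inductive-property} then yields
\begin{equation*}
  \left(\alpha||\gamma\right)_c=\left(\alpha'||\gamma'\right)^J_{c_J}.
\end{equation*}
Since $\alpha'$ and $\gamma'$ have connected supports lying in two distinct connected components of $J$, Remark \ref{rk:connected-component-compatibility} applied inside $W_J$ forces the right-hand side to vanish, which is exactly the conclusion. The only nontrivial ingredient is the already-cited Proposition \ref{prop:inductive-property}, so the main (minor) obstacle is purely bookkeeping: making sure that the notion of support is coherent under $\iota$ and that $J$ genuinely splits as a disconnected Dynkin subdiagram.
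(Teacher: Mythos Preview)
Your argument is circular. You invoke Proposition~\ref{prop:inductive-property} as the sole nontrivial ingredient, but in the paper that proposition is \emph{stated} in Section~\ref{sec:phi-ap} with its proof deferred to Section~\ref{sec:technical-results}, and that deferred proof explicitly relies on Proposition~\ref{prop:spaced-compatibility} (case~(1) there: when $\supp(\alpha)$ and $\supp(\gamma)$ lie in different connected components of $J$, the equality $(\iota(\alpha)||\iota(\gamma))_c=0$ is justified precisely by citing Proposition~\ref{prop:spaced-compatibility}). So Proposition~\ref{prop:inductive-property} is not yet available at the point where you need it.

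The paper avoids this by giving a direct argument that does not pass through restriction to a subdiagram: after disposing of negative roots via Remark~\ref{rk:compatibility-of-negative} and Lemma~\ref{lemma:symmetry}, it locates a separating vertex $i$ on the path between the two supports, picks the type-$A$ connected component $I'$ of $I\setminus\{i\}$ containing one of the supports, and repeatedly applies $\tau_c^{-1}$ (possibly after replacing $c$ by $c^{-1}$ via Proposition~\ref{prop:inverse-invariance}) to shrink that support, inducting on $|I'|$. Only once this is in hand is Proposition~\ref{prop:inductive-property} established. If you want to salvage your strategy, you would need an independent proof of Proposition~\ref{prop:inductive-property} that does not appeal to the spaced-compatibility statement.
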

\begin{proof}
	Using Remark \ref{rk:connected-component-compatibility} we can assume that $I$
	is connected.  If any of $\alpha$ and $\gamma$ is a negative root we are done
	by Lemma \ref{lemma:symmetry} and Remark \ref{rk:compatibility-of-negative}.
	Let then both $\alpha$ and $\gamma$ be positive roots.

	Supports of positive roots are connected subgraphs of the Dynkin diagram $I$.
	Since $\alpha$ and $\gamma$ are spaced, there must exist at least one vertex
	on the shortest path connecting $\supp(\alpha)$ and $\supp(\gamma)$ not
	belonging to either of the supports. Let $i$ be the nearest to $\supp(\alpha)$
	of such vertices.  Let $I'$ be a connected component of $I\setminus\left\{ i
	\right\}$ of type $A$ and containing one of the two support; there exist such
	a component because we are in finite type.  Assume $\alpha$ is the root whose
	support is contained in $I'$ (the other case is identical). We will proceed by
	induction on the cardinality of $I'$.
  
	Let $j$ be the only vertex in $\supp(\alpha)$ connected to $i$.  Without loss
	of generality we can assume $j\prec_c i$, i.e., $s_j$ precedes $s_i$ in any
	reduced expression of $c$.  If this is not the case  we can use Proposition
	\ref{prop:inverse-invariance} since two roots are spaced if and only if their
	images under the involution $\delta\mapsto\overline\delta$ are spaced.
  
	Apply $\tau_c^{-1}$ to both $\alpha$ and $\gamma$; they are  positive so
	$\tau_c^{-1}$ acts as $c^{-1}$ on them. By construction we have
	$$
		\supp\left( \tau_c^{-1}\alpha \right)\subseteq
		I'\setminus\left\{ j	\right\}
	$$
	and
	$$
		\supp\left( \tau_c^{-1}\gamma \right)\subseteq
		\left( I\setminus I' \right)\cup\left\{ i \right\}
	$$
	where both relations hold since $s_j$ is applied before $s_i$ and $\alpha$
	belongs to a type $A$ component of $I$ .  If one among $\tau_c^{-1}\alpha$ and
	$\tau_c^{-1}\gamma$ is negative we are done (again using Lemma
	\ref{lemma:symmetry} if needed) otherwise the statement follows by induction
	on $|I'|$.

\end{proof}

To complete the proof of Proposition \ref{prop:inductive-property} we need to
sharpen Lemma \ref{lemma:coxeter-connected}.  From this moment on we will denote
a word on the alphabet $\left\{ s_i \right\}_{i\in I}$ (up to commutations) by
$\mathbf{w}$; the corresponding element in $W$ will be denoted by $w$.  For
convenience we will record a sequence of elementary moves by the corresponding
word. As an example in type $A_4$ (again using the standard numeration of simple
roots from \cite{bourbaki}) the sequence of elementary moves
$$
	s_1s_2s_3s_4\rightarrow s_2s_3s_4s_1\rightarrow s_1s_3s_4s_2
$$
will be encoded by $\mathbf{w}=s_2s_1$; indeed 
$$
	(s_2s_1)(s_1s_2s_3s_4)(s_2s_1)^{-1}=s_1s_3s_4s_2
	\,.
$$

The key observation is given by the following Lemma.
\begin{lemma}
	For any pair of Coxeter elements $c$ and $c'$ and for any $i\in I$, there
	exist a sequence of elementary moves connecting $c$ and $c'$ that does not
	contain $s_i$.
	\label{lemma:w-without-i}
\end{lemma}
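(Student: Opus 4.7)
The plan is to induct on $n=|I|$, with the base case $n=1$ trivial. For the inductive step I would lean on the correspondence between Coxeter elements of $W_I$ and orientations of the tree $I$ (every orientation of a tree is acyclic), under which an elementary move at $s_j$ corresponds to reversing all edges incident to $j$, and is legal precisely when $j$ is a source or a sink of the current orientation. The argument splits according to whether $i$ is a leaf of $I$.

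Suppose first that $i$ is not a leaf, so $I\setminus\{i\}$ decomposes into at least two connected components $I_1,\dots,I_k$ and each sub-diagram $I_j\cup\{i\}$ has strictly fewer than $n$ vertices. I would process the components one at a time: for each $j$, the inductive hypothesis applied to $W_{I_j\cup\{i\}}$ with forbidden vertex $i$ produces a sequence of elementary moves in $W_{I_j\cup\{i\}}$, not using $s_i$, that connects $c|_{I_j\cup\{i\}}$ to $c'|_{I_j\cup\{i\}}$. Such a sequence is automatically valid in $W_I$ and leaves the orientations on the other components untouched: for any vertex $u\in I_j$ the neighbors of $u$ in $I$ lie entirely in $I_j\cup\{i\}$, so $u$ is a source (resp.\ sink) of $I$ exactly when it is one of $I_j\cup\{i\}$.

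Suppose now that $i$ is a leaf with unique neighbor $v$. Then a Coxeter element of $W_I$ is determined by a pair $(o,\epsilon)$, where $o$ is a Coxeter element of $W_{I\setminus\{i\}}$ and $\epsilon\in\{+,-\}$ is the orientation of the edge $\{i,v\}$; write $c\leftrightarrow(o,\epsilon)$ and $c'\leftrightarrow(o',\epsilon')$. When $\epsilon=\epsilon'$, the inductive hypothesis applied to $W_{I\setminus\{i\}}$ with forbidden vertex $v$ yields a sequence from $o$ to $o'$ that never flips $s_v$; it is valid in $W_I$ and leaves $\epsilon$ unchanged. When $\epsilon\neq\epsilon'$, say $\epsilon = (i\to v)$ and $\epsilon' = (v\to i)$, I would choose any Coxeter element $o_0$ of $W_{I\setminus\{i\}}$ in which $v$ is a sink (such an $o_0$ exists since any orientation of a tree is a Coxeter element), apply the inductive hypothesis in $W_{I\setminus\{i\}}$ to move $o$ to $o_0$ without flipping $s_v$, flip $s_v$ (which is now a legal non-$s_i$ move in $W_I$ because $v$ has become a sink of $I$), and finally apply the inductive hypothesis a second time to go from $s_vo_0$ to $o'$ without flipping $s_v$, thus preserving the new edge orientation $v\to i$.

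The main obstacle is the interplay between the two notions of ``elementary move'': in the ambient $W_I$ versus in the various sub-diagrams produced by the induction. The key bookkeeping point, used repeatedly above, is that the source/sink status of a vertex $u$ in $W_I$ coincides with its status in any sub-diagram containing all the neighbors of $u$ in $I$; this is what makes both the reduction to $W_{I_j\cup\{i\}}$ in the non-leaf case and the two reductions to $W_{I\setminus\{i\}}$ in the leaf case interact correctly with the source-sink flip structure.
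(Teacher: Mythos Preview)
Your argument is correct, but it takes a genuinely different route from the paper's. The paper's proof is a one-line trick: writing $c=s_1\cdots s_n$, the single elementary move encoded by $\mathbf{w}=s_1$ and the sequence encoded by $\mathbf{w'}=s_2\cdots s_n$ have the same effect on $c$ (both cyclically rotate the word by one position, just in opposite directions). Hence any occurrence of $s_i$ in a connecting sequence furnished by the earlier connectivity lemma (Lemma~\ref{lemma:coxeter-connected}) can be replaced in place by the complementary sequence through the other $n-1$ reflections. Your approach instead rebuilds the connecting sequence from scratch by induction on $|I|$, exploiting the tree structure and the source/sink picture, and in particular does not invoke Lemma~\ref{lemma:coxeter-connected} at all. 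The paper's argument is shorter and gives an explicit local substitution rule for the forbidden move; yours is longer but self-contained and makes the combinatorics of how sub-diagram moves lift to $W_I$ completely transparent, which is close in spirit to how the paper later uses this lemma (via Lemma~\ref{lemma:distant-conjugation}).
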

\begin{proof}
	The result is obvious once we notice that both the sequences of simple moves
	$\mathbf{w}=s_1$ and $\mathbf{w'}=s_2\cdots s_n$ acts in the same way on
	$c=s_1\cdots s_n$.

\end{proof}

The Dynkin diagram $I$ is in general a forest. For any leaf $i$ in $I$, i.e.,
for any node belonging to a single edge, denote by $i_\#$ the only other node of
$I$ connected to $i$.

\begin{lemma}
	For any leaf $i\in I$ and for any Coxeter element $c$ there exist a bipartite
	Coxeter element $t$ and a sequence of elementary moves $\mathbf{w}$ such that 
  \begin{enumerate}
    \item $c=\mathbf{w}t\mathbf{w}^{-1}$
    \item $\mathbf{w}$ contains neither $s_i$, nor $s_{i_\#}$.
  \end{enumerate}
  \label{lemma:distant-conjugation}
\end{lemma}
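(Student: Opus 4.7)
The plan is to derive Lemma~\ref{lemma:distant-conjugation} directly from Lemma~\ref{lemma:w-without-i}, applied inside the parabolic subgroup $W_{I\setminus\{i\}}$. The key observation is that since $i$ is a leaf, $s_i$ commutes with every simple reflection except $s_{i_\#}$.

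First I would reduce to the case where $s_i$ is initial in $c$. Any reduced expression of $c$ can be rearranged via commutations so that $s_i$ appears immediately before or immediately after $s_{i_\#}$; in the former case $s_i$ is initial, and in the latter $s_i$ is final. In the final case I pass to $c^{-1}$, which is again a Coxeter element with $s_i$ now initial, and use that $c = \mathbf{w}\,t\,\mathbf{w}^{-1}$ is equivalent to $c^{-1}=\mathbf{w}\,t^{-1}\,\mathbf{w}^{-1}$ with $t^{-1}$ bipartite whenever $t$ is. So I may write $c = s_i \cdot c'$, where $c' := c_{I\setminus\{i\}}$ is a Coxeter element of $W_{I\setminus\{i\}}$.

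Next, the forest $I\setminus\{i\}$ inherits a bipartition from $I$ on each connected component; let $t' = t'_+ t'_-$ be the bipartite Coxeter element of $W_{I\setminus\{i\}}$ in which $s_{i_\#}$ appears in the second factor $t'_-$. Apply Lemma~\ref{lemma:w-without-i} in $W_{I\setminus\{i\}}$ to the pair $(c', t')$ and the simple reflection $s_{i_\#}$: this produces a sequence of elementary moves $\mathbf{w}$ in $W_{I\setminus\{i\}}$, not containing $s_{i_\#}$, such that $c' = \mathbf{w}\,t'\,\mathbf{w}^{-1}$. Every $s_j$ appearing in $\mathbf{w}$ satisfies $j \ne i, i_\#$ and hence commutes with $s_i$, so the same sequence remains valid in $W$ acting on $c = s_i c'$, keeping $s_i$ initial throughout; this yields $c = \mathbf{w}\,(s_i t')\,\mathbf{w}^{-1}$.

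To finish, set $t := s_i t'$ and verify it is a bipartite Coxeter element of $W$. Under the bipartition $I = I_+ \sqcup I_-$ extending the one on $I \setminus \{i\}$ with $i \in I_+$ and $i_\# \in I_-$, the reflection $s_i$ commutes with every simple reflection indexed by $I_+ \setminus \{i\}$ (none of which is $i_\#$, hence none is adjacent to $i$), so $t_+ := s_i\, t'_+$ is a well-defined product of pairwise commuting reflections and $t = t_+\, t'_-$ has the required form. The most delicate step is confirming that each elementary move in the lifted sequence genuinely qualifies as an elementary move in $W$: at each stage one must check that the reflection $s_j$ used to conjugate in $W_{I\setminus\{i\}}$ remains initial (or final) once the $s_i$ prefix is adjoined, and this is precisely what the commutativity of $s_i$ with every $s_j$, $j \ne i_\#$, guarantees.
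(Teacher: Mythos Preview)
Your argument is correct, and it uses the same core observation as the paper (since $i$ is a leaf, $s_i$ commutes with every simple reflection except $s_{i_\#}$), but the way you invoke Lemma~\ref{lemma:w-without-i} is organized differently. The paper applies Lemma~\ref{lemma:w-without-i} directly in $W$, obtaining a sequence $\mathbf{w}$ that avoids $s_{i_\#}$ and reaches some bipartite $t$; it then notes that all occurrences of $s_i$ in $\mathbf{w}$ commute past the other letters, so they can be gathered and cancelled in pairs, leaving at most one; finally a parity consideration---choosing $t$ so that $s_i$ and $s_{i_\#}$ appear in the same relative order as in $c$---forces the number of surviving $s_i$'s to be zero. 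You instead pass to the parabolic subgroup $W_{I\setminus\{i\}}$, apply Lemma~\ref{lemma:w-without-i} there (which automatically excludes $s_i$), and then lift the resulting sequence back to $W$ by prepending $s_i$ to the Coxeter element throughout. Your route avoids the cancellation/parity step at the cost of an explicit reduction to ``$s_i$ initial'' and a verification that elementary moves in $W_{I\setminus\{i\}}$ lift to elementary moves in $W$; the paper's route avoids the reduction and the lift but needs the commutation-and-parity argument. Both are short and rest on the same idea.
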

\begin{proof}
	According to Lemma \ref{lemma:w-without-i} we can find a sequence of
	elementary moves $\mathbf{w}$ not containing $s_{i_\#}$ that transform $c$
	into either of the bipartite Coxeter elements.  By construction $s_i$ commutes
	with all reflections appearing in $\mathbf{w}$ since it commutes with all the
	simple reflections except $s_{i_\#}$.  Therefore, using commutations
	relations, we can always find such a $\mathbf{w}$ containing at most one copy
	of $s_i$.  Choosing now a bipartite Coxeter element $t$ in which $s_i$ and
	$s_{i_\#}$ appear in the same order in which they appear in $c$ we have that
	$\mathbf{w}$ does not contain $s_i$.

\end{proof}

Let $i$ be a leaf of $I$  and $c$ be any Coxeter element.  Let $t$ and
$\mathbf{w}$ be respectively the bipartite Coxeter element and the sequence of
elementary moves constructed in Lemma \ref{lemma:distant-conjugation}. To fix
ideas suppose that $s_i$ appears on the left of $s_{i_\#}$ in $c$ (and in $t$);
the other case can be dealt with exactly in the same way but multiplying on the
right instead of on the left.  Denote by $c_J$ and $t_J$ the corresponding
Coxeter elements for the Dynkin sub diagram $J=I\setminus\left\{ i \right\}$. By
our assumption $c_J=s_ic$ and $t_J=s_it$.
\begin{lemma}
	In the notation just established $\mathbf{w}$ is a sequence of elementary
	moves in $W_J$ conjugating  $t_J$ and $c_J$.
  \label{lemma:respect-conjugation}
\end{lemma}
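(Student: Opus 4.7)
The plan is to verify two things: that $\mathbf{w}$ conjugates $t_J$ to $c_J$ as an element of $W_J$, and that each letter of $\mathbf{w}$ is initial or final at the appropriate intermediate Coxeter element, so that the sequence genuinely consists of elementary moves in $W_J$. The first point is immediate from the commutation hypothesis on $\mathbf{w}$: since $\mathbf{w}$ contains neither $s_i$ nor $s_{i_\#}$, every letter of $\mathbf{w}$ commutes with $s_i$, so $\mathbf{w}$ itself commutes with $s_i$ in $W$, giving
\begin{equation*}
	c_J = s_i c = s_i\,\mathbf{w}\,t\,\mathbf{w}^{-1} = \mathbf{w}\,(s_i t)\,\mathbf{w}^{-1} = \mathbf{w}\,t_J\,\mathbf{w}^{-1}.
\end{equation*}

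For the second point, write $\mathbf{w} = s_{j_k}\cdots s_{j_1}$ and denote by $c^{(\ell)}$ the Coxeter element of $W$ obtained after applying the first $\ell$ moves to $t$; by construction $c^{(0)} = t$, $c^{(k)} = c$, and $s_{j_\ell}$ is initial or final in $c^{(\ell-1)}$ for each $\ell$. Define $t^{(\ell)}$ analogously, starting from $t_J$; the same commutation argument gives $t^{(\ell)} = s_i c^{(\ell)}$. Because conjugating a Coxeter element by one of its initial or final reflections preserves the relative order of all the remaining letters, $s_i$ continues to precede $s_{i_\#}$ in every $c^{(\ell)}$. Using that $s_i$ commutes with every simple reflection other than $s_{i_\#}$, we may slide $s_i$ to the front of a reduced expression of $c^{(\ell)}$, and deleting it displays $t^{(\ell)}$ as a Coxeter element of $W_J$. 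To see that $s_{j_\ell}$ is initial (resp. final) in $t^{(\ell-1)}$, pick a reduced expression of $c^{(\ell-1)}$ starting (resp. ending) with $s_{j_\ell}$; since $j_\ell \neq i_\#$, we may further commute $s_i$ to the position immediately after (resp. before) $s_{j_\ell}$, and deletion of $s_i$ leaves $s_{j_\ell}$ at the corresponding end of a reduced expression of $t^{(\ell-1)}$.

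The main obstacle is this second verification: the equality of group elements is a one-line check, but showing that each move descends to a bona fide elementary move in $W_J$ relies on the combinatorial freedom provided by Lemma \ref{lemma:distant-conjugation}, namely that $\mathbf{w}$ was chosen to avoid both $s_i$ and $s_{i_\#}$, which is exactly what allows $s_i$ to be freely commuted past every $s_{j_\ell}$ in every intermediate Coxeter element.
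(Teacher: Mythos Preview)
Your proposal is correct. The first computation you give is exactly the paper's proof; the paper stops there, leaving the verification that each letter of $\mathbf{w}$ remains initial or final in the intermediate Coxeter elements of $W_J$ implicit. Your second paragraph fills in that detail carefully, and the argument is sound: since $i$ is a leaf connected only to $i_\#$, and $\mathbf{w}$ avoids both $s_i$ and $s_{i_\#}$, the vertex $i$ stays a source in every intermediate orientation and deleting it does not affect whether any $j_\ell$ is a source or sink. So you have supplied a justification the paper takes for granted, but the underlying approach is the same.
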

\begin{proof}
  $$
		c_J=
		s_ic=
		s_i\mathbf{w}t\mathbf{w}^{-1}=
		\mathbf{w}s_it\mathbf{w}^{-1}=
		\mathbf{w}t_J\mathbf{w}^{-1}.
	$$

\end{proof}

We now have all the required tools to prove Proposition
\ref{prop:inductive-property}.
\begin{proof}[Proof of Proposition \ref{prop:inductive-property}]
	We can assume, without loss of generality, $I$ to be irreducible.  It suffices
	to show that the result holds when $J$ is obtained from $I$ removing one node
	$i$.  Let $\alpha$ and $\gamma$ be roots in $\Phi_{\ap}(c_J)$.  There are two
	cases to consider depending on the relative position of $\supp(\alpha)$,
	$\supp(\gamma)$ and $i$. 
  \begin{enumerate}
    \item 
			If $\supp(\alpha)$ and $\supp(\gamma)$ belong to different connected
			components of $J$ then 
      $$
        \left( \alpha||\gamma \right)^J_{c_J}=
				0=
				\left( \iota(\alpha)||\iota(\gamma) \right)_c.
      $$
			The first equality holds because of Remark
			\ref{rk:connected-component-compatibility} and the second one is an
			instance of Proposition \ref{prop:spaced-compatibility}.

		\item
			If $\supp(\alpha)$ and $\supp(\gamma)$ belong to the same connected
			component of the Dynkin diagram $J$ then we can assume $i$ to be a leaf of
			$I$.  Let $i_\#$ be the only vertex in $I$ connected to $i$. By Lemmata
			\ref{lemma:distant-conjugation} and \ref{lemma:respect-conjugation}, there
			exist a sequence of elementary moves $\mathbf{w}$ and a bipartite Coxeter
			element $t$ such that 
			$$
				c=\mathbf{w}t\mathbf{w}^{-1}
			$$
			$$
				c_J=\mathbf{w}t_J\mathbf{w}^{-1}
			$$
			and $\mathbf{w}$ contains neither $s_i$, nor $s_{i_\#}$. Denote by
			$\sigma_{\mathbf{w}}$ the composition of the maps $\sigma_i$ corresponding
			to $\mathbf{w}$. By construction neither of
			$\sigma_{\mathbf{w}}\alpha$ and $\sigma_{\mathbf{w}}\gamma$ contains $i$ in its
			support.
			Using point \ref{cor:bipartite-inductive-property} of
			Corollary \ref{cor:bipartite-summary_1} we can conclude
			$$
				\left( \alpha||\gamma \right)_c=
				\left( \sigma_{\mathbf{w}}\alpha||\sigma_{\mathbf{w}}\gamma \right)_t=
				\left( \sigma_{\mathbf{w}}\alpha||\sigma_{\mathbf{w}}\gamma \right)^J_{t_J}=
				\left( \alpha||\gamma \right)^J_{c_J}.
			$$
  \end{enumerate}

\end{proof}

\section{Proof of the  main results}
\label{sec:proof-main-results} 
To prove Theorem \ref{thm-phi:z-basis} we will use the following easy
observation.
\begin{lemma}
	Let $J\subset I$ be a Dynkin sub diagram.  There is a bijection between
	$c$-clusters in $\Phi_{\ap}(c)$ containing $\left\{ -\beta_i^c \right\}_{i\in
	I\setminus J}$ and $c_J$-clusters in $\Phi^J_{\ap}(c_J)$.  
	\label{lemma:cluster-induction}
\end{lemma}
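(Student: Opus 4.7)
The plan is to use the twisted inclusion $\iota:\Phi^J_{\ap}(c_J)\to\Phi_{\ap}(c)$ to build the bijection explicitly. Given a $c_J$-cluster $C'$ in $\Phi^J_{\ap}(c_J)$, I would set
\[
  \Psi(C')\;:=\;\iota(C')\cup\left\{-\beta_i^c\right\}_{i\in I\setminus J},
\]
and, conversely, given a $c$-cluster $C$ containing every $-\beta_i^c$ with $i\in I\setminus J$, I would set
\[
  \Phi(C)\;:=\;\iota^{-1}\!\left(C\setminus\left\{-\beta_i^c\right\}_{i\in I\setminus J}\right).
\]
The verification splits into showing that $\Psi(C')$ really is a $c$-cluster, that $\Phi(C)$ really is a $c_J$-cluster, and that the two maps are mutually inverse; the last point is essentially forced by the construction once the first two are in place.

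For pairwise $c$-compatibility of $\Psi(C')$, I would separate the pairs into three types. Pairs inside $\iota(C')$ are handled directly by Proposition \ref{prop:inductive-property}, which transports $c_J$-compatibility to $c$-compatibility. Pairs inside $\left\{-\beta_i^c\right\}_{i\in I\setminus J}$ and mixed pairs are both controlled by the initial condition $(-\beta_i^c||\delta)_c=[\delta;\alpha_i]_+$: since each $\beta_j^c$ is a positive root (Remark \ref{rk:triangular-change-of-base}), its negative has non-positive simple-root coefficients, and since $\iota(\alpha)$ is either a positive root supported on $J$ or some $-\beta_j^c$ with $j\in J$, the quantity $[\iota(\alpha);\alpha_i]_+$ vanishes whenever $i\in I\setminus J$. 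Lemma \ref{lemma:symmetry} then handles compatibility in the reverse order.

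Maximality of $\Psi(C')$ is the main step I expect to have to argue carefully. Suppose some $\gamma\in\Phi_{\ap}(c)$ can be adjoined to $\Psi(C')$ without violating $c$-compatibility. Compatibility with each $-\beta_i^c$, $i\in I\setminus J$, forces $[\gamma;\alpha_i]_+=0$ for those $i$. If $\gamma$ is positive this forces its support to lie in $J$, so $\gamma\in\iota(\Phi^J_+)$; if $\gamma=-\beta_k^c$ then either $k\in I\setminus J$ (and $\gamma$ was already in $\Psi(C')$) or $k\in J$ (so that $\gamma=\iota(-\beta_k^{c_J})$). In every case $\gamma=\iota(\gamma')$ for a unique $\gamma'\in\Phi^J_{\ap}(c_J)$, and Proposition \ref{prop:inductive-property} shows $\gamma'$ is $c_J$-compatible with every element of $C'$, contradicting the maximality of $C'$. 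The proof that $\Phi(C)$ is a $c_J$-cluster runs symmetrically: the same case analysis shows that $C\setminus\left\{-\beta_i^c\right\}_{i\in I\setminus J}\subseteq\iota(\Phi^J_{\ap}(c_J))$ so $\Phi(C)$ is well-defined, pairwise $c_J$-compatibility transfers back through Proposition \ref{prop:inductive-property}, and any candidate extension of $\Phi(C)$ could be lifted via $\iota$ to contradict the maximality of $C$.
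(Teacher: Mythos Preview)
Your argument is correct and follows essentially the same approach as the paper, which treats the lemma as an immediate consequence of the observation that $-\beta_i^c\in C$ forces $i\notin\supp(\gamma)$ for every other $\gamma\in C$; you have simply written out in full the bijection via $\iota$ and the invocation of Proposition~\ref{prop:inductive-property} that the paper leaves implicit.
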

The existence of such a bijection is a direct consequence of the fact that, if
$-\beta_i^c$ is in a $c$-cluster $C$, then for any other root $\gamma$ in that
$c$-cluster $i\not\in\supp(\gamma)$.  In particular any positive root in $C$ is
contained in a complement of the space generated by 
$$
	\left\{-\beta_i^c  \right\}_{-\beta_i^c\in C\cap\Phi_-}.
$$

\begin{definition}
	Call a $c$-cluster $C$ in $\Phi_{\ap}(c)$  \emph{positive} if $C\subset\Phi_+$.
\end{definition} 

\begin{proof}[Proof of Theorem \ref{thm-phi:z-basis}]
	As already mentioned in Remark \ref{rk:triangular-change-of-base}, the set
	$$
		\left\{ -\beta_i^c \right\}_{i\in I}
	$$
	is a $\mathbb{Z}$-basis of $Q$.	By
	Lemma  \ref{lemma:cluster-induction} it suffices to show that the theorem
	holds for any given positive cluster $C$. Apply $\tau_c^{-1}$ to $C$; since
	it is positive $\tau_c^{-1}$ acts on all the roots in it as  $c^{-1}$.  Since
	$c^{-1}$ is a product of reflections, $C$ is a $\mathbb{Z}$-basis if and only
	if  $\tau_c^{-1}C$ is a $\mathbb{Z}$-basis. Continue to apply $\tau_c^{-1}$
	until one of the roots is sent to a negative one. This will happen because,
	similarly to the case of $\Pi(c)$, any $\tau_c$-orbit contains precisely two
	negative roots $\left\{ -\beta_i^c,-\beta_{i^*}^c \right\}$ or a single
	negative root $-\beta_i^c$ if $i=i^*$. 
	Remove the negative root just obtained  again using Lemma
	\ref{lemma:cluster-induction} and conclude by induction on the rank of the
	root system.

\end{proof}

\begin{remark}
	The proof just proposed is a straightforward adaptation of the proof of
	Theorem 1.8 in \cite{y-system} to the new setup of $c$-almost positive roots.
\end{remark}

\begin{proof}[Proof of Proposition \ref{prop-phi:unique-cluster-expansion}]
	Let $c=s_1\dots s_n$; for $\gamma\in Q$ write 
	\begin{equation*}
		\gamma=-\sum_{i\in I}m_{-\beta_i^c}\beta_i^c+\gamma_+
	\end{equation*}
	where the coefficients $m_{-\beta_i^c}$ are the non-negative integers uniquely
	defined  (since the change of basis $\alpha_i\mapsto\beta_i^c$ is triangular)
	by the recursive formula 
	\begin{equation*}
		m_{-\beta_i^c}:=
		\left[-\gamma-\sum_{j=1}^{i-1}m_{-\beta_j^c}\beta_j^c;\alpha_n  \right]_+
	\end{equation*}
	(we use the convention that the empty sum is $0$).
	By construction $\gamma_+$, the \emph{positive part} of $\gamma$, is in the
	positive cone of the sub-root lattice generated by 
	$$
		\left\{ \alpha_i | m_{-\beta_i^c}=0 \right\}.
	$$
	Clearly $\gamma\in Q$ has a unique $c$-cluster expansion if and only if
	$\gamma_+$ does. 
	Without loss of generality we can thus assume that $\gamma$ is in the positive cone
	$Q_+$.

	The root $-\beta_i^c$ can appear with a positive coefficient in a $c$-cluster
	expansion of $\gamma$ only if the coefficient $[\gamma;\alpha_i]$ is negative
	therefore the result holds for $\gamma=0$ and we can assume $\gamma\neq0$.

	If $\sum_{\alpha\in\Phi_+}m_\alpha\alpha$ is a $c$-cluster expansion of
	$\gamma$ then
	$$
		\tau_c^{-1}\gamma=
		c^{-1}\gamma=
		c^{-1}\left( \sum_{\alpha\in\Phi_+}m_\alpha\alpha	\right)=
		\sum_{\alpha\in\Phi_+}m_\alpha c^{-1}\alpha=
		\sum_{\alpha\in\Phi_+}m_\alpha\tau_{c^{-1}}\alpha
	$$
	is a $c$-cluster expansion of $\tau_c^{-1}\gamma$. In other words $\gamma$
	has a unique $c$-cluster expansion if and only if $\tau_c^{-1}\gamma$ does.
	Applying $\tau_c^{-1}$ a sufficient number of times $\gamma$ can be moved
	outside of the positive cone.  We can then take its positive part and conclude
	by induction on the rank of the root system.	

\end{proof}

\begin{remark}
	This proof, as its analog in \cite{y-system}, has the advantage of considering
	one Coxeter element at a time. An alternative strategy could have been the
	following. The claim holds for bipartite Coxeter elements by point
	\ref{cor:bipartite-cluster-expansion} in Corollary
	\ref{cor:bipartite-summary_1}. Using the fact that the maps $\sigma_i$
	preserve compatibility degree one can then transfer the property to other sets
	of $c$-almost positive roots.
\end{remark}

As in \cite{y-system}, Theorem \ref{thm-phi:complete-fan} follows from
Proposition
\ref{prop-phi:unique-cluster-expansion}. For the sake of completeness we replicate
the proof here.
\begin{proof}[Proof of Theorem \ref{thm-phi:complete-fan}]
	It suffices to show that
	\begin{enumerate}
		\item 
			no two cones of $\mathcal{F}_c^\Phi$ have a common interior point
		\item
			the union of all cones is $Q_{\mathbb{R}}$.
	\end{enumerate}
	Assume by contradiction that there exist a point in the common interior of two
	cones. Since $Q_{\mathbb{Q}}$ is dense in $Q_{\mathbb{R}}$ we may assume that
	such point is in $Q_{\mathbb{Q}}$. Clearing the denominators there is then a
	common point in $Q$ which contradicts the uniqueness of the $c$-cluster
	expansion. Therefore the interiors of any two cones are disjoint. 
	
	Since for any $\gamma\in Q$ there exist a $c$-cluster expansion the union of
	all the cones $\mathbb{R}_+C$ contains $Q$; since this union is closed in
	$Q_{\mathbb{R}}$ and stable under the action of $\mathbb{R}_+$ it must contain
	all of $Q_\mathbb{R}$ and we are done.

\end{proof}

To prove Theorem \ref{thm-phi:polytopality} we will apply the criterion provided
by Lemma 2.1 in \cite{associahedra}. Let us restate it in the particular case we
need.
\begin{lemma}
	Let $F_c$ be a continuous piecewise-linear function 
	$$
		F_c:Q_{\mathbb{R}}\longrightarrow\mathbb{R}
	$$
	linear on the maximal cones of the fan $\mathcal{F}_c^\Phi$ (as such $F_c$ is
	uniquely determined by its values on $\Phi_{\ap}(c)$). Then
	$\mathcal{F}_c^\Phi$ is the normal fan to a unique full-dimensional polytope
	with support function $F_c$ if and only if $F_c$ satisfy the following system
	of inequalities.  For any pair of adjacent $c$-clusters $C_\alpha$ and
	$C_\gamma$ let $\alpha$ be the only root in $C_\alpha\setminus C_\gamma$ and
	$\gamma$ the only root in $C_\gamma\setminus C_\alpha$. Let
	\begin{align}
		m_\alpha\alpha+m_{\gamma}\gamma=
		\sum_{\delta\in C_\alpha\cap C_\gamma}m_\delta \delta
		\label{linear-dependence}
	\end{align}
	be the unique (up to non-zero scalar multiple) linear dependence on the
	elements of $C_\alpha\cup C_\gamma$ with $m_\alpha$ and $m_{\gamma}$ positive.
	Then
	\begin{equation}
		m_\alpha F_c(\alpha)+m_{\gamma}F_c(\gamma)>\sum_{\delta\in C_\alpha\cap C_\gamma}m_\delta
		F_c(\delta)
		\,.
	\end{equation}
	In particular the domains of linearity of $F_c$ are exactly the maximal cones
	of $\mathcal{F}_c^\Phi$.
	\label{lemma:criterion-polytopality}
\end{lemma}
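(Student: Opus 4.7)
The plan is to recognise the displayed inequality as the classical strict-convexity-across-walls condition and then to invoke the standard correspondence between such support functions on a complete simplicial fan and simple polytopes having that fan as normal fan. The target polytope is, of course,
$T := \{\varphi \in Q_\mathbb{R}^* : \varphi(\alpha) \le F_c(\alpha),\ \forall \alpha \in \Phi_{\ap}(c)\}$,
and the proof proceeds by producing and characterising its vertices one per maximal cone.

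First, for each $c$-cluster $C$, Theorem~\ref{thm-phi:z-basis} guarantees that the $n$ roots of $C$ form a $\mathbb{Z}$-basis of $Q$, so the linear system $\varphi_C(\alpha) = F_c(\alpha)$ for $\alpha \in C$ has a unique solution $\varphi_C \in Q_\mathbb{R}^*$. These are the candidate vertices. Next, I would translate the hypothesised inequality into a geometric condition at a single candidate vertex: applying the linear dependence (\ref{linear-dependence}) to $\varphi_{C_\alpha}$, which agrees with $F_c$ on $\{\alpha\} \cup (C_\alpha \cap C_\gamma)$, yields $m_\alpha F_c(\alpha) + m_\gamma \varphi_{C_\alpha}(\gamma) = \sum_{\delta} m_\delta F_c(\delta)$. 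Hence the strict inequality in the statement is equivalent to $\varphi_{C_\alpha}(\gamma) < F_c(\gamma)$, i.e.\ to the geometric fact that, at each vertex $\varphi_{C_\alpha}$, every facet inequality obtained by a single wall-crossing is strictly satisfied.

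The core step is then to propagate this strict local feasibility into global feasibility, that is, to show $\varphi_C(\beta) \le F_c(\beta)$ for \emph{every} $\beta \in \Phi_{\ap}(c)$. I would proceed by induction along the dual graph of $\mathcal{F}_c^\Phi$, which is connected by completeness of the fan (Theorem~\ref{thm-phi:complete-fan}). A wall-crossing from $C_\alpha$ to $C_\gamma$ displaces the candidate vertex along a direction $v$ annihilating $C_\alpha \cap C_\gamma$ and normalised so that $v(\gamma) > 0$ by the strict convexity just established; for any other root $\beta$, the sign of $v(\beta)$ is determined by which open half-space cut out by the wall contains $\mathbb{R}_+\beta$. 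Combining this monotonicity with the local inequality along a path from any initial cluster containing $\beta$ to an arbitrary $C$ yields $\varphi_C(\beta) \le F_c(\beta)$, with equality precisely when $\beta \in C$.

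To conclude: each $\varphi_C$ is a vertex of $T$ with exactly the $n$ tight inequalities indexed by $C$; since $C$ is a basis, this vertex is simple and its normal cone is $\mathbb{R}_+ C$. Completeness of $\mathcal{F}_c^\Phi$ ensures that no other vertices can appear, so $T$ is a simple full-dimensional polytope whose normal fan is $\mathcal{F}_c^\Phi$ and whose support function is the prescribed extension of $F_c$. The main obstacle is precisely the propagation step in paragraph three: upgrading the local strict convexity at every wall to the global inequality $\varphi_C(\beta) \le F_c(\beta)$. This is the crux of the fan--polytope dictionary and is where both completeness of $\mathcal{F}_c^\Phi$ and the basis property of Theorem~\ref{thm-phi:z-basis} are indispensable; it is also the reason the analogous argument in \cite{associahedra} occupies the bulk of the proof of their Lemma~2.1.
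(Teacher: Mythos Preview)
The paper does not give its own proof of this lemma: it is introduced as a restatement, in the present notation, of Lemma~2.1 of \cite{associahedra}, and is used as a black box. So there is no ``paper's proof'' to compare against; the relevant comparison is with the argument in \cite{associahedra}, which you yourself cite.

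Your sketch follows the standard route for that result and is essentially correct in outline. Two small points are worth flagging. First, you only argue the ``if'' direction; the converse (that if $\mathcal{F}_c^\Phi$ is the normal fan of a polytope with support function $F_c$ then the wall inequalities are strict) and the uniqueness claim are both easy, but should at least be acknowledged. Second, in your propagation step the assertion that ``the sign of $v(\beta)$ is determined by which open half-space cut out by the wall contains $\mathbb{R}_+\beta$'' is exactly where completeness of $\mathcal{F}_c^\Phi$ is used, and you should make explicit that moving along a path in the dual graph from a cluster containing $\beta$ to $C$ can be arranged so that $\beta$ stays weakly on the far side of each wall crossed (this is what a geodesic in the dual graph, or equivalently a gallery in the fan, gives you). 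With those clarifications your argument matches the one in \cite{associahedra}.
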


To apply Lemma \ref{lemma:criterion-polytopality} we make the relations
(\ref{linear-dependence}) more explicit by  exploring the interaction of
$\sigma_1$ and $\tau_c$.  Note that, having established Theorem
\ref{thm-phi:complete-fan}, any vector-valued function on $\Phi_{\ap}(c)$ can be
extended to a continuous piecewise-linear map on $Q_\mathbb{R}$; in particular
this is the case for $\tau_c$ and $\sigma_i$.

To avoid degenerate cases, from now on, assume that every connected component of
$I$ contains at least $2$ vertices.  As before, let $c$ be $s_1\cdots s_n$.
\begin{lemma}
	Let $\alpha$ and $\gamma$ be roots in $\Phi_{\ap}(c)$ such that
	$$
		\left( \alpha||\gamma \right)_c=
		1=
		\left( \gamma||\alpha \right)_c
		\,.
	$$
	Then
	$$
		\sigma_1^{-1}\left( \sigma_1(\alpha)+\sigma_1(\gamma) \right)
	$$ 
	is either $\alpha+\gamma$ or 
	$$
		\tau_c\left( \tau_c^{-1}(\alpha)+\tau_c^{-1}(\gamma)\right)
	$$
	and it is different from $\alpha+\gamma$ only if one of the two roots (say,
	$\alpha$) is $-\beta_1^c$; in this case
	$$
		\sigma_1^{-1}\left( \sigma_1(-\beta_1^c)+\sigma_1(\gamma) \right)=
		\gamma-\alpha_1\in Q_+
		\,.
	$$
	\label{lemma:sigma-is-tau}
\end{lemma}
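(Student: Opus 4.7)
The plan is to do a case analysis on whether one of $\alpha,\gamma$ equals $-\beta_1^c$, which is the unique element of $\Phi_{\ap}(c)$ on which $\sigma_1$ differs from the restriction of $s_1$. Throughout I rely on the fact that, by Theorem~\ref{thm-phi:complete-fan}, the maps $\sigma_1$, $\sigma_1^{-1}$ and $\tau_c$ extend to continuous piecewise-linear maps on $Q_\mathbb R$; on every maximal cone of the relevant fan whose spanning cluster does not contain the exceptional root, each of these maps coincides with a single honest linear map ($s_1$ or $c$, respectively).

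\emph{Case I: $\alpha,\gamma\neq -\beta_1^c$.} Here $\sigma_1(\alpha)+\sigma_1(\gamma)=s_1(\alpha+\gamma)$, and applying $\sigma_1^{-1}$ reduces to locating this vector in $\mathcal{F}_{s_1cs_1}^{\Phi}$. If the maximal cone containing $s_1(\alpha+\gamma)$ does not have $\alpha_1=\beta_1^{s_1cs_1}$ as a generator, then $\sigma_1^{-1}$ acts there as $s_1$ and we recover $\alpha+\gamma$. Otherwise $\sigma_1^{-1}$ is the linear map sending $\alpha_1\mapsto -\beta_1^c$ and every other generator $\eta\mapsto s_1\eta$; writing the cluster expansion of $s_1(\alpha+\gamma)$ in that cluster and applying this map produces $\tau_c(\tau_c^{-1}\alpha+\tau_c^{-1}\gamma)$. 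The cleanest way to see that this is the correct output is to combine the equivariance $\sigma_1\circ\tau_c=\tau_{s_1cs_1}\circ\sigma_1$ with the bipartite two-element orbit property of Corollary~\ref{cor:bipartite-summary_2}(\ref{cor:bipartite-2-elements}), propagated to $c$ along elementary moves: these two facts restrict the possibilities for $\sigma_1^{-1}(\sigma_1(\alpha)+\sigma_1(\gamma))$ to exactly $\alpha+\gamma$ and $\tau_c(\tau_c^{-1}\alpha+\tau_c^{-1}\gamma)$.

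\emph{Case II: WLOG $\alpha=-\beta_1^c$.} The initial condition for the compatibility degree gives $1=(-\beta_1^c||\gamma)_c=[\gamma;\alpha_1]_+$, so $\gamma$ is a positive root with $[\gamma;\alpha_1]=1$; in particular $\gamma-\alpha_1\in Q_+$. A direct computation gives
\[
\sigma_1(\alpha)+\sigma_1(\gamma)=\alpha_1+s_1\gamma.
\]
The crucial claim is that $\alpha_1+s_1\gamma$ lies in a maximal cone of $\mathcal{F}_{s_1cs_1}^{\Phi}$ that does not have $\alpha_1$ as a generator---namely, the cone adjacent across the facet opposite $\alpha_1$ to the cluster containing both $\alpha_1$ and $s_1\gamma$. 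Granting this, $\sigma_1^{-1}$ acts as $s_1$ on $\alpha_1+s_1\gamma$ and yields $s_1\alpha_1+\gamma=\gamma-\alpha_1$. To match this with $\tau_c(\tau_c^{-1}\alpha+\tau_c^{-1}\gamma)$, note that $\tau_c^{-1}(-\beta_1^c)=\beta_1^c$ and $\tau_c^{-1}(\gamma)=c^{-1}\gamma$, while
\[
c\beta_1^c=s_1\cdots s_n(s_n\cdots s_2\alpha_1)=s_1\alpha_1=-\alpha_1;
\]
under the analogous cone-membership check (that $\tau_c$ acts as the honest linear map $c$ at $\beta_1^c+c^{-1}\gamma$), one gets $-\alpha_1+\gamma=\gamma-\alpha_1$, completing the identification.

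\emph{Main obstacle.} The technical heart of the proof is the cone-membership verification: pinning down, in each fan, which maximal cone contains the sum of the two specific roots. My plan is to treat this inductively by reducing to the bipartite case through Lemma~\ref{lemma:coxeter-connected}, relying on Corollary~\ref{cor:bipartite-summary_2} together with the compatibility-degree-preserving and $\tau$-intertwining properties of the $\sigma_i$; these together force the two-element orbit description to persist under elementary moves, leaving only the two claimed outcomes $\alpha+\gamma$ and $\tau_c(\tau_c^{-1}\alpha+\tau_c^{-1}\gamma)$, of which only the latter can equal $\gamma-\alpha_1$ in Case II.
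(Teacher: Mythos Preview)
Your proposal has a genuine circularity problem and, as a consequence, does not establish the precise statement of the lemma.

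In Case~I you allow the outcome to be \emph{either} $\alpha+\gamma$ \emph{or} $\tau_c(\tau_c^{-1}\alpha+\tau_c^{-1}\gamma)$, and you justify that these are the only two possibilities by appealing to the two-element orbit property ``propagated to $c$ along elementary moves'' from the bipartite case. But in the paper's logical structure this lemma is exactly the tool used to \emph{prove} that propagation (Proposition~\ref{prop:2-element-orbit}); invoking the two-element orbit for general $c$ here is circular. Moreover, the lemma asserts something sharper than ``the output is one of the two'': it says that whenever neither root equals $-\beta_1^c$ the output is \emph{specifically} $\alpha+\gamma$. You never verify this, and your ``Otherwise'' subcase of Case~I (the cone containing $\alpha_1$) in fact never occurs, which is the point you are missing.

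The paper's argument avoids all of this by a direct, self-contained observation: if the $c$-cluster expansion of a vector $v$ does not involve $-\beta_1^c$ (for instance because $v\in Q_+$, or more generally because $[v;\alpha_1]\ge 0$), then the piecewise-linear extension of $\sigma_1$ agrees with the linear map $s_1$ at $v$, hence $\sigma_1^{-1}(s_1 v)=v$. This immediately gives $\sigma_1^{-1}(\sigma_1\alpha+\sigma_1\gamma)=\alpha+\gamma$ in all cases with $\alpha,\gamma\neq-\beta_1^c$ (both positive, or one equal to some $-\beta_i^c$ with $i\neq 1$), and in the remaining case $\alpha=-\beta_1^c$ one rewrites $\alpha_1+s_1\gamma=s_1(\gamma-\alpha_1)$ and uses $\gamma-\alpha_1\in Q_+$ in the same way. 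Your ``crucial claim'' in Case~II is exactly this observation applied to $v=\gamma-\alpha_1$; you had the ingredient $\gamma-\alpha_1\in Q_+$ in hand but did not use it to discharge the claim. Replacing your cone-membership/induction plan with this single remark gives a complete and non-circular proof.
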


\begin{proof}
	If both $\alpha$ and $\gamma$ are positive roots then
	\begin{align*}
		\sigma_1^{-1}\left( \sigma_1(\alpha)+\sigma_1(\gamma) \right)= 
		\sigma_1^{-1}\left( s_1\alpha+s_1\gamma \right)=
		\sigma_1^{-1}\left( s_1\left( \alpha+\gamma \right)\right).
	\end{align*}
	Let
	$$
    \alpha+\gamma=\sum_{\delta\in \Phi_{\ap}(c)} m_\delta\delta
  $$
	be the $c$-cluster expansion of $\alpha+\gamma$; all the roots $\delta$ such
	that $m_\delta\neq0$ are positive; therefore 
	\begin{align*}
		\sigma_1^{-1}\left( s_1\left( \alpha+\gamma \right)\right)=
		\sigma_1^{-1}\left( s_1\left( \sum m_\delta\delta \right) \right)=
		\sigma_1^{-1}\left( \sigma_1\left( \alpha+\gamma \right) \right)= 
		\alpha+\gamma.
	\end{align*}
	
	It remains to consider the case in which one of the two roots is negative
	(they cannot be both negative); we can, by symmetry, assume $\alpha$ to be the
	negative one. If $\alpha=-\beta_i^c$ with $i\neq 1$ then 
	$$
		\sigma_1^{-1}\left( \sigma_1(-\beta_i^c)+\sigma_1(\gamma) \right)=
		\sigma_1^{-1}\left( -s_1\beta_i^c+s_1\gamma \right)=
		\sigma_1^{-1}\left( s_1\left( \gamma-\beta_i^c \right) \right)
		\,.
	$$
	Let 
	$$
		\gamma-\beta_i^c=\sum_{\delta\in \Phi_{\ap}(c)} m_\delta \delta
	$$
	be the $c$-cluster expansion of $\gamma-\beta_i^c$. None of the roots $\delta$
	appearing with a positive coefficient is $-\beta_1^c$ since 
	$$
		\left[ \gamma-\beta_i^c;\alpha_1 \right]\ge0
	$$
	($-\beta_1^c$ is the only negative root having $\alpha_1$ with non-zero
	coefficient). Therefore on $ \gamma-\beta_i^c$ the actions of $s_1$ and of
	$\sigma_1$ are the same. We get
	$$
		\sigma_1^{-1}\left( \sigma_1(-\beta_i^c)+\sigma_1(\gamma) \right)=
		\sigma_1^{-1}\left( s_1\left( \gamma-\beta_i^c \right) \right)=
		\sigma_1^{-1}\left( \sigma_1\left( \gamma-\beta_i^c \right) \right)=
		\gamma-\beta_i^c
		\,.
	$$
	Finally if $\alpha=-\beta_1^c$ then on the one hand we have 
	\begin{align*}
		\sigma_1^{-1}\left( \sigma_1(-\beta_1^c)+\sigma(\gamma) \right)=
		\sigma_1^{-1}\left( \alpha_1+s_1\gamma \right)=
		\sigma_1^{-1}\left( s_1(\gamma-\alpha_1) \right)
	\end{align*}
	$\gamma-\alpha_1$ is in $Q_+$ therefore 
	\begin{align*}
		\sigma_1^{-1}\left( s_1(\gamma-\alpha_1) \right)=
		\sigma_1^{-1}\left( \sigma_1(\gamma-\alpha_1) \right)=
		\gamma-\alpha_1
	\end{align*}
	On the other hand
	\begin{align*}
		\tau_c\left( \tau_c^{-1}(-\beta_1^c)+\tau_c^{-1}(\gamma) \right)=
		\tau_c\left(\beta_1^c+c^{-1}\gamma\right)=
		\tau_c\left( c^{-1}(\gamma-\alpha_1) \right)
	\end{align*}
	we can interchange $c^{-1}$ and $\tau_c^{-1}$ because
	$\gamma-\alpha_1$ is in $Q_+$ and conclude
	\begin{align*}
		\tau_c\left( c^{-1}(\gamma-\alpha_1) \right)=
		\tau_c\left( \tau_c^{-1}(\gamma-\alpha_1) \right)=
		\gamma-\alpha_1.
	\end{align*}

\end{proof}

\begin{proposition}
	Let $\alpha$ and $\gamma$ be roots in $\Phi_{\ap}(c)$ such that
	\begin{equation*}
		\left( \alpha||\gamma \right)_c=
		1=
		\left( \gamma||\alpha \right)_c
		\,.
	\end{equation*}
	Then 
	$$
		\left\{ \tau_c^m\left( \tau_c^{-m}(\alpha)+\tau_c^{-m}(\gamma) \right)
		\right\}_{m\in\mathbb{Z}}
	$$
	consist of exactly two elements, one is $\alpha+\gamma$; denote the other by
	$\alpha\uplus_c\gamma$.
	\label{prop:2-element-orbit}
\end{proposition}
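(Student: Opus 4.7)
The membership of $\alpha+\gamma$ in the set is immediate, as it is the element indexed by $m=0$. What requires work is the cardinality bound, and my plan is to reduce it to the bipartite setting where point \ref{cor:bipartite-2-elements} of Corollary \ref{cor:bipartite-summary_2} already supplies exactly two elements.

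Concretely, using Lemma \ref{lemma:coxeter-connected} I would fix a sequence of elementary moves taking $c$ to some bipartite Coxeter element $t$ and induct on its length. The transfer claim to establish is this: if $c'=s_kcs_k$ is adjacent to $c$ and the proposition is known for $(c',\sigma_k\alpha,\sigma_k\gamma)$, then it follows for $(c,\alpha,\gamma)$. Since $\sigma_k$ preserves the compatibility degree, the hypothesis $(\sigma_k\alpha||\sigma_k\gamma)_{c'}=1=(\sigma_k\gamma||\sigma_k\alpha)_{c'}$ is automatic. Writing $e^c_m:=\tau_c^m(\tau_c^{-m}\alpha+\tau_c^{-m}\gamma)$ and $e^{c'}_m$ for the analogue on the $c'$-side, I would apply Lemma \ref{lemma:sigma-is-tau} to each pair $(\tau_c^{-m}\alpha,\tau_c^{-m}\gamma)$, whose mutual compatibility degree is still $1$ by $\tau_c$-invariance of $(\bullet||\bullet)_c$. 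This yields
\begin{equation*}
	\sigma_k(\tau_c^{-m}\alpha)+\sigma_k(\tau_c^{-m}\gamma)\in\bigl\{\sigma_k(\tau_c^{-m}\alpha+\tau_c^{-m}\gamma),\;\sigma_k\tau_c(\tau_c^{-m-1}\alpha+\tau_c^{-m-1}\gamma)\bigr\}.
\end{equation*}
Combining this with the intertwining $\sigma_k\tau_c=\tau_{c'}\sigma_k$, extended piecewise linearly, identifies $e^{c'}_m$ with either $\sigma_k(e^c_m)$ or $\sigma_k(e^c_{m+1})$. Hence $\{e^{c'}_m\}_m\subseteq\sigma_k(\{e^c_m\}_m)$, giving $|\{e^{c'}_m\}|\le|\{e^c_m\}|$; running the symmetric argument with $\sigma_k^{-1}$ produces the reverse inequality, so the two cardinalities agree and the induction closes.

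The main obstacle will be making these manipulations rigorous at the piecewise-linear level on $Q_\mathbb{R}$ rather than just on $\Phi_{\ap}(c)$. The maps $\sigma_k$, $\tau_c$, and $\tau_{c'}$ are defined on almost positive roots and extended via the complete simplicial fans of Theorem \ref{thm-phi:complete-fan}; I need to verify that the intertwining $\sigma_k\tau_c=\tau_{c'}\sigma_k$ persists at the non-root points $\tau_c^{-m}\alpha+\tau_c^{-m}\gamma$ that enter the argument. This should reduce to the observation that both sides are linear on a common refinement of the two fans and agree on the vertices (the almost positive roots), together with a careful use of $c$-cluster expansions in the spirit of the proof of Lemma \ref{lemma:sigma-is-tau} itself. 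Once this technical check is in place, the bipartite base case from Corollary \ref{cor:bipartite-summary_2} propagates back along the elementary moves to give $|\{e^c_m\}|=2$, so the set consists of $\alpha+\gamma$ together with a second vector, which we denote $\alpha\uplus_c\gamma$.
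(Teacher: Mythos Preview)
Your approach is essentially the same as the paper's: reduce to the bipartite case via Corollary \ref{cor:bipartite-summary_2}\ref{cor:bipartite-2-elements}, use the intertwining $\sigma_k\tau_c=\tau_{c'}\sigma_k$ together with Lemma \ref{lemma:sigma-is-tau} to show $e^{c'}_m=\sigma_k(e^c_{m'})$ for some $m'\in\{m,m+1\}$, and conclude by the symmetric inclusion that the two sets have equal cardinality. Your worry about the piecewise-linear intertwining is overstated: since $\sigma_k$ preserves compatibility degrees it carries $c$-clusters bijectively to $c'$-clusters, so its piecewise-linear extension maps the fan $\mathcal{F}_c^\Phi$ isomorphically onto $\mathcal{F}_{c'}^\Phi$; thus $\sigma_k\tau_c\sigma_k^{-1}$ and $\tau_{c'}$ are both linear on each maximal cone of $\mathcal{F}_{c'}^\Phi$ and agree on its rays, hence everywhere---no common refinement is needed.
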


\begin{proof}
	If $c$ is bipartite there is nothing to prove by point
	\ref{cor:bipartite-2-elements} in Corollary \ref{cor:bipartite-summary_2}.  In
	view of Lemma \ref{lemma:sigma-is-tau}, for any other Coxeter element
	$c=s_1\cdots s_n$ and any integer $m$ we have
	\begin{align*}
		\tau_{s_1cs_1}^m\left(\tau_{s_1cs_1}^{-m}(\sigma_1(\alpha))+\tau_{s_1cs_1}^{-m}(\sigma_1(\gamma))\right)=
		\sigma_1\tau_c^m\sigma_1^{-1}\left(	\sigma_1\tau_c^{-m}(\alpha)+\sigma_1\tau_c^{-m}(\gamma) \right).
	\end{align*}
  For a suitable $m'$ in $\left\{ m,m+1 \right\}$ we get 
	\begin{align*}
		\sigma_1\tau_c^m\sigma_1^{-1}\left(	\sigma_1\tau_c^{-m}(\alpha)+\sigma_1\tau_c^{-m}(\gamma) \right)=
		\sigma_1\tau_c^{m'}\left( \tau_c^{-m'}(\alpha)+\tau_c^{-m'}(\gamma)\right).
	\end{align*}
  Therefore 
	\begin{equation*}
		\left\{
		\sigma_1(\alpha)+\sigma_1(\gamma),\sigma_1(\alpha)\uplus_{s_1cs_1}\sigma_1(\gamma)
		\right\}\subseteq
		\sigma_1\left\{ \alpha+\gamma,\alpha\uplus_c\gamma \right\}
	\end{equation*}
	and the claim follows reversing the role of $c$ and $s_1cs_1$.

\end{proof}

\begin{remark}
	Following Remark 1.15 in \cite{associahedra}, if $I$ contains a component of
	type $A_1$ let $\alpha_1$ and $-\alpha_1$ be the corresponding roots.  In view
	of 	Remark \ref{rk:connected-component-compatibility} they are both compatible
	with any root in $\Phi_{\ap}(c)\setminus\left\{ \alpha_1,-\alpha_1 \right\}$.
	By direct inspection, we have
	$$
		\left( -\alpha_1||\alpha_1 \right)_c=
		1=
		\left( \alpha_1 ||-\alpha_1\right)_c
	$$
	In this case their sum is $0$ and it is natural to declare
	$-\alpha_1\uplus_c\alpha_1$ to be $0$ too.
\end{remark}

\begin{corollary}
	If 
	$$
		\left( \alpha||\gamma \right)_c=
		1=
		\left( \gamma||\alpha \right)_c
	$$
	then every root appearing with positive coefficient in the cluster expansion
	of either $\alpha+\gamma$ or $\alpha\uplus_c\gamma$ is compatible with
	$\alpha$, $\gamma$ and with any other root compatible with both $\alpha$ and
	$\gamma$.
	\label{cor:c.e.-intersection}
\end{corollary}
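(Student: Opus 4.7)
The strategy is to propagate the statement from the bipartite case, where it is Corollary \ref{cor:bipartite-summary_2}(\ref{cor:bipartite-compatible-expansion}), to an arbitrary Coxeter element $c$ by induction along a sequence of elementary moves connecting $c$ to a bipartite Coxeter element, as guaranteed by Lemma \ref{lemma:coxeter-connected}. The inductive step will transport the property across a single elementary move via $\sigma_1^{\pm 1}$, using Proposition \ref{prop:2-element-orbit} and Lemma \ref{lemma:sigma-is-tau} to handle the fact that $\sigma_1$ is only piecewise linear.

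\textbf{Reduction to $\alpha+\gamma$.} I first observe that, because $\tau_c$ permutes $c$-clusters and preserves the $c$-compatibility degree, the $c$-cluster expansion of $\tau_c(\mu)$ is obtained from that of $\mu$ by applying $\tau_c$ termwise. By Proposition \ref{prop:2-element-orbit} there is an integer $k$ with $\alpha\uplus_c\gamma=\tau_c^{k}\bigl(\tau_c^{-k}\alpha+\tau_c^{-k}\gamma\bigr)$, and the pair $\tau_c^{-k}\alpha,\tau_c^{-k}\gamma$ has $c$-compatibility degree $1$ in both orders.  Pushing any conclusion about the cluster expansion of $\tau_c^{-k}\alpha+\tau_c^{-k}\gamma$ through $\tau_c^{k}$, together with $\tau_c$-invariance of the compatibility degree, gives the corresponding conclusion for $\alpha\uplus_c\gamma$.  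It therefore suffices to prove the statement for sums of the form $\alpha+\gamma$.

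\textbf{Inductive step via $\sigma_1$.} Assume the full statement (for both $\alpha+\gamma$ and $\alpha\uplus_c\gamma$) holds for $c$ and pass to the adjacent Coxeter element $s_1cs_1$; the case of a final simple reflection is symmetric, using $\sigma_1^{-1}$. Given $\alpha',\gamma'\in\Phi_{\ap}(s_1cs_1)$ with $(\alpha'||\gamma')_{s_1cs_1}=1=(\gamma'||\alpha')_{s_1cs_1}$, set $\alpha:=\sigma_1^{-1}(\alpha')$ and $\gamma:=\sigma_1^{-1}(\gamma')$; since $\sigma_1$ preserves the compatibility degree, the same numerical relation holds for $\alpha,\gamma$ with respect to $c$.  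By Lemma \ref{lemma:sigma-is-tau}, the vector $\sigma_1^{-1}(\alpha'+\gamma')$ is either $\alpha+\gamma$ or $\tau_c(\tau_c^{-1}\alpha+\tau_c^{-1}\gamma)$; by Proposition \ref{prop:2-element-orbit} the second alternative is exactly $\alpha\uplus_c\gamma$.  In either case the inductive hypothesis applies to $\sigma_1^{-1}(\alpha'+\gamma')$. Finally, on any single $c$-cluster $C$, the map $\sigma_1$ is the \emph{linear} map $s_1$ (with the single corrected value $\sigma_1(-\beta_1^c)=\alpha_1$), so it carries the $c$-cluster expansion of a point in $\mathbb{R}_+C$ termwise to the $s_1cs_1$-cluster expansion of its image.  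Pushing the conclusion forward through $\sigma_1$, and using once more that $\sigma_1$ preserves compatibility, yields the desired statement for $\alpha'+\gamma'$.

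\textbf{Main obstacle.} The core subtlety is that $\sigma_1$ is only piecewise linear, so a priori $\sigma_1(\alpha+\gamma)\neq\sigma_1(\alpha)+\sigma_1(\gamma)$.  Lemma \ref{lemma:sigma-is-tau} pinpoints exactly this failure and shows that the discrepancy is absorbed by switching between $\alpha+\gamma$ and $\alpha\uplus_c\gamma$.  This is precisely why one must prove the two statements simultaneously, and why Proposition \ref{prop:2-element-orbit} is needed to certify that only two vectors ever appear.
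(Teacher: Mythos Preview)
Your argument is correct and follows essentially the same route as the paper: the bipartite case from Corollary~\ref{cor:bipartite-summary_2}(\ref{cor:bipartite-compatible-expansion}) is the base case, and the general case is obtained by transporting the property along elementary moves, using that $\sigma_1$ preserves compatibility degrees and (as a piecewise-linear map linear on maximal cones) carries $c$-cluster expansions termwise to $s_1cs_1$-cluster expansions. The only organizational difference is that you first reduce the $\uplus_c$ case to the $+$ case via $\tau_c$-invariance and then transport only the $+$ statement, whereas the paper transports both at once by invoking the set equality $\{\sigma_1(\alpha)+\sigma_1(\gamma),\sigma_1(\alpha)\uplus_{s_1cs_1}\sigma_1(\gamma)\}=\sigma_1(\{\alpha+\gamma,\alpha\uplus_c\gamma\})$ established in the proof of Proposition~\ref{prop:2-element-orbit}; the content is the same.

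One small imprecision: when you write that ``the second alternative is exactly $\alpha\uplus_c\gamma$'', what Proposition~\ref{prop:2-element-orbit} actually gives is $\tau_c(\tau_c^{-1}\alpha+\tau_c^{-1}\gamma)\in\{\alpha+\gamma,\alpha\uplus_c\gamma\}$. This does not affect the argument, since in either case your inductive hypothesis applies.
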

\begin{proof}
	The statement is true in the bipartite case by point
	\ref{cor:bipartite-compatible-expansion} in Corollary
	\ref{cor:bipartite-summary_2}.  For an arbitrary Coxeter element $c=s_1\cdots
	s_n$ the result can be deduced using elementary moves: from the previous proof
	we have
	\begin{equation}
		\left\{
		\sigma_1(\alpha)+\sigma_1(\gamma),\sigma_1(\alpha)\uplus_{s_1cs_1}\alpha_1(\gamma)
		\right\}=
		\sigma_1\left( \left\{ \alpha+\gamma,\alpha\uplus_c\gamma \right\} \right)
		\label{eqn:sums-in-sums}
	\end{equation}
	and the claim follows since $\sigma_1$ preserves compatibility degrees.

\end{proof}

\begin{lemma}
	In every dependence relation (\ref{linear-dependence}) we have
	\begin{equation}
		\left( \alpha||\gamma \right)_c=
		1=
		\left( \gamma||\alpha \right)_c
		\,.
		\label{eqn:root-exchangeable}
	\end{equation}
	Futhermore, after normalization the relation (\ref{linear-dependence}) is just
	the $c$-cluster expansion of $\alpha+\gamma$:
	$$
		\alpha+\gamma=\sum_{\delta\in \Phi_{\ap}(c)}m_\delta \delta
		\,.
	$$
	\label{lemma:reduction-to-c.e.}
\end{lemma}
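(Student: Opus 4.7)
The plan is to proceed in three stages.

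First I would show that $\alpha$ and $\gamma$ are \emph{incompatible}. Every root of $C_\alpha\cap C_\gamma$ is compatible with both $\alpha$ and $\gamma$ (belonging as it does to each of the two clusters $C_\alpha$ and $C_\gamma$), so $(\alpha||\gamma)_c=0$ would make $C_\alpha\cup\{\gamma\}$ into a pairwise compatible family of size $n+1$, contradicting the maximality of $C_\alpha$ as a simplex in $\Delta_c^\Phi$. Combined with Lemma~\ref{lemma:symmetry} this gives $(\alpha||\gamma)_c\neq 0$ and $(\gamma||\alpha)_c\neq 0$.

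Next I would upgrade the inequality to the equality $(\alpha||\gamma)_c=1=(\gamma||\alpha)_c$ by reducing to the bipartite case along the lines of Section~\ref{sec:technical-results}. By Lemma~\ref{lemma:coxeter-connected} there is a sequence of elementary moves carrying $c$ to a bipartite Coxeter element $t$; the associated composition of $\sigma_i^{\pm 1}$ is a bijection $\Phi_{\ap}(c)\to\Phi_{\ap}(t)$ preserving the compatibility degree, so it carries clusters to clusters and adjacent pairs to adjacent pairs, reducing the assertion to the case $c=t$. Further composing with the linear bijection $t_-:\Phi_{\ge-1}\to\Phi_{\ap}(t)$, we land in the ordinary cluster complex $\Delta_{\ge-1}$, where the corresponding statement for adjacent clusters is part of the bipartite machinery of \cite{associahedra}.

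Third, once both compatibility degrees equal $1$, I would identify (\ref{linear-dependence}) with the cluster expansion of $\alpha+\gamma$. Let
$$\alpha+\gamma=\sum_\delta m'_\delta\delta$$
be the expansion furnished by Proposition~\ref{prop-phi:unique-cluster-expansion}. By Corollary~\ref{cor:c.e.-intersection}, every $\delta$ with $m'_\delta>0$ is compatible with $\alpha$, $\gamma$, and all of $C_\alpha\cap C_\gamma$; the fan property of Theorem~\ref{thm-phi:complete-fan} then forces $\delta\in C_\alpha\cup C_\gamma$, because the wall $\mathbb{R}_{+}(C_\alpha\cap C_\gamma)$ is shared by exactly the two maximal cones $\mathbb{R}_{+}C_\alpha$ and $\mathbb{R}_{+}C_\gamma$. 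Rearranging produces a second linear relation on $C_\alpha\cup C_\gamma$ that must be a scalar multiple $\lambda$ of (\ref{linear-dependence}), giving $1-m'_\alpha=\lambda m_\alpha$ and $1-m'_\gamma=\lambda m_\gamma$. Since $m'_\alpha m'_\gamma=0$ and both are non-negative integers, supposing $m'_\alpha\geq 1$ forces $\lambda\leq 0$ while $m'_\gamma=0$ forces $\lambda>0$, a contradiction; hence $m'_\alpha=m'_\gamma=0$, $m_\alpha=m_\gamma$, and dividing (\ref{linear-dependence}) by $m_\alpha$ recovers precisely the $c$-cluster expansion of $\alpha+\gamma$.

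The hard part, I expect, will be Step 2: the fact that in the bipartite case adjacent clusters differ by a pair of compatibility degree $1$ is used implicitly in \cite{associahedra} but needs to be carefully identified (or reproved in our language) to provide the foothold for the elementary-move induction. After that, Step 3 is a clean consequence of the uniqueness of both the cluster expansion and of the linear dependence on $C_\alpha\cup C_\gamma$.
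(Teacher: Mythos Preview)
Your argument is correct, but the paper establishes $(\alpha||\gamma)_c=1$ by a different and more self-contained route that avoids the reduction to the bipartite case entirely. The paper first shows $m_\alpha=m_\gamma=1$ directly from Theorem~\ref{thm-phi:z-basis}: expressing $\alpha$ in the $\mathbb{Z}$-basis $C_\gamma$ forces every $m_\delta/m_\alpha$ to be an integer, so after normalizing to coprime integers $m_\alpha=1$, and symmetrically $m_\gamma=1$. Then, rather than transporting via the maps $\sigma_i^{\pm1}$ to a bipartite Coxeter element, the paper uses only $\tau_c$-invariance to reduce to the case $\alpha=-\beta_i^c$, where the compatibility degree is given explicitly by the initial condition $(-\beta_i^c||\gamma)_c=[\gamma;\alpha_i]_+$; the normalized relation $\gamma=\beta_i^c+\sum m_\delta\delta$ with each $\delta\in C_{-\beta_i^c}\cap C_\gamma$ then yields $[\gamma;\alpha_i]=1$ directly, since no such $\delta$ has $i$ in its support. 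This eliminates your Step~2 and the ``hard part'' you flag: no result about adjacent clusters in $\Phi_{\ge-1}$ needs to be located in \cite{associahedra} or reproved.

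Your Step~3 and the paper's final paragraph are essentially the same application of Corollary~\ref{cor:c.e.-intersection}, though the paper's prior knowledge that $m_\alpha=m_\gamma=1$ shortens the bookkeeping (and note that your linear-algebra maneuver to rule out $m'_\alpha\ge1$ is unnecessary: Corollary~\ref{cor:c.e.-intersection} already forces every root in the expansion to be compatible with $\gamma$, so $m'_\alpha=0$ immediately). What your approach buys is methodological uniformity with the rest of Section~\ref{sec:proof-main-results}, where most statements are pushed to the bipartite case via $\sigma_i$; what the paper's approach buys is a short standalone argument with no external dependency on the bipartite analog.
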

\begin{proof}
	Normalize (\ref{linear-dependence}) so that coefficients are coprime integers.
	By Theorem \ref{thm-phi:z-basis} all the coefficients in 
	$$
		\alpha=
		-\frac{m_\gamma}{m_\alpha}\gamma 
		+\sum_{\delta\in C_\alpha\cap C_\gamma}\frac{m_\delta}{m_\alpha} \delta
  $$
	are integers forcing $m_\alpha=1$ (it is positive by hypothesis). In a similar
	fashion $m_\gamma=1$. 

	To show (\ref{eqn:root-exchangeable}), using the $\tau_c$-invariance of the
	compatibility degree, it suffices to consider the case $\alpha=-\beta_i^c$. We
	have 
	$$
		\gamma=\beta_i^c+\sum_{\delta\in C_{-\beta_i^c}\cap C_\gamma}m_\delta \delta
	$$
	and thus 
	$$
		\left( -\beta_i^c||\gamma \right)_c=\left[ \gamma;\alpha_i \right]_+=1
	$$
	since $i$ is not in $\supp(\delta)$ for any $\delta$ in $C_{-\beta_i^c}\cap
	C_\gamma$.

	The fact that, after the normalization, the dependence
	(\ref{linear-dependence}) is the $c$-cluster expansion of $\alpha+\gamma$ is a
	direct application of Corollary \ref{cor:c.e.-intersection}. Any root
	appearing with non zero coefficient  in the cluster expansion of
	$\alpha+\gamma$ is compatible with $\alpha$, $\gamma$, and with any other root
	compatible with both $\alpha$ and $\gamma$, therefore it is a root in
	$C_\alpha \cap C_\gamma$.

\end{proof}

Proposition \ref{prop:2-element-orbit} together with Corollary
\ref{cor:c.e.-intersection} and Lemma \ref{lemma:reduction-to-c.e.} allow us to
compute exchange relations. Let $\mathcal{A}_0(c)$ be the coefficient-free
cluster algebra with initial exchange matrix $B(c)$ and denote by $\left\{
x_{\alpha,c} \right\}_{\alpha\in\Phi_{\ap}(c)}$ its cluster variables.  Due to
Proposition \ref{prop-phi:unique-cluster-expansion} all the cluster monomials
are in bijection with points of $Q$. Namely we can write
$$
	x_{\gamma,c}:=\prod_{\delta\in \Phi_{\ap}(c)} x_{\delta,c}^{m_\delta}
$$
where 
$$
	\gamma=\sum_{\delta\in \Phi_{\ap}(c)} m_\delta \delta
$$
is the cluster expansion of $\gamma\in Q$.
\begin{proof}[Proof of Theorem \ref{thm-phi:exchange-relations}]
	The statement is true when $c$ is a bipartite Coxeter element (cf. (5.1) in
	\cite{ca2}). Let $c=s_1\cdots s_n$.  We have
	\begin{align*}
		x_{\alpha,s_1cs_1}x_{\gamma,s_1cs_1}
		&
		=x_{\sigma_1^{-1}(\alpha),c}x_{\sigma_1^{-1}(\gamma),c}\\
		&=x_{\sigma_1^{-1}(\alpha)+\sigma_1^{-1}(\gamma),c}+x_{\sigma_1^{-1}(\alpha)\uplus_c\sigma_1^{-1}(\gamma),c}\\
		&=x_{\sigma_1(\sigma_1^{-1}(\alpha)+\sigma_1^{-1}(\gamma)),s_1cs_1}+x_{\sigma_1(\sigma_1^{-1}(\alpha)\uplus_c\sigma_1^{-1}(\gamma)),s_1cs_1}\\
		&=x_{\alpha+\gamma,s_1cs_1}+x_{\alpha\uplus_{s_1cs_1}\gamma,s_1cs_1}.
	\end{align*}

\end{proof}

Recall Remark \ref{rk:wheels}: by construction of the map $\tau_c$, there is one
$\tau_c$-orbit in $\Phi_{\ap}(c)$ for each $w_0$-orbit in $I$, i.e., there exist
$-\beta_j^c$ such that 
$$
	\tau_c^m(-\beta_i^c)=-\beta_j^c
$$
if and only if $j\in\left\{ i,i^* \right\}$.

Since $\sigma_j$ sends $-\beta_i^c$ to $\left\{\pm\beta_i^{s_jcs_j}\right\}$ the
$\tau_c$-orbit of $-\beta_i^c$ gets mapped to the $\tau_{s_jcs_j}$-orbit of
$-\beta_i^{s_jcs_j}$. In particular, for any function
$$
	f:I\longrightarrow \mathbb{R}
$$
such that
$$
	f(i)=f(i^*)
$$
we get a family of maps, one for each Coxeter element $c$, 
$$
	F_c=F_{c;f}:\Phi_{\ap}(c)\longrightarrow \mathbb{R}
$$
defined setting $F_c(-\beta_i^c):=f(i)$ and extending by $\tau_c$-invariance.
These maps are invariant under the action of $\sigma_i$, that is
\begin{equation} 
	F_{s_ics_i}(\sigma_i(\alpha))=F_c(\alpha)
	\label{eqn:F-invariance} 
\end{equation} 
for any $c$, any $i$ initial in $c$, and any $\alpha$ in $\Phi_{\ap}(c)$.  
From now on assume that $F_c$ has been defined
in this way and extend it to a continuous, piecewise-linear function
$$
	F_c:Q_\mathbb{R}\longrightarrow\mathbb{R}
$$ 
linear on maximal cones of $\mathcal{F}_c$.

\begin{proposition}
	Fix any function 
	$$
		f:I\longrightarrow\mathbb{R}
	$$
	such that
	\begin{enumerate}
		\item 
			for any $i\in I$
			$$
				f(i)=f(i^*)
			$$

		\item
			for any $j\in I$
			$$
				\sum_{i\in I}a_{ij}f(i)>0
				\,.
			$$
	\end{enumerate}
	Then for any pair of roots $\alpha$ and $\gamma$ in $\Phi_{\ap}(c)$ such that
	$$
		\left( \alpha||\gamma \right)_c=
		1=
		\left( \gamma||\alpha \right)_c
	$$
	the following inequality holds
	$$
		F_c(\alpha)+F_c(\gamma)>
		\max\left\{F_c(\alpha+\gamma),F_c(\alpha\uplus_c\gamma)\right\}
		\,.
	$$
  \label{prop:inequalities}
\end{proposition}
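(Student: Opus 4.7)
The plan is to reduce the statement to the bipartite case, where the analogous inequality is point~(\ref{cor:bipartite-inequalities}) of Corollary~\ref{cor:bipartite-summary_2}, by transporting it along a sequence of elementary moves supplied by Lemma~\ref{lemma:coxeter-connected}. I would proceed by induction on the length of such a sequence connecting $c$ to some bipartite Coxeter element $t$; the base case is immediate.

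For the inductive step, assume the claim is known for $c'=s_1cs_1$ (with $s_1$ initial in $c$; the case of a final reflection is handled identically via $\sigma_1^{-1}$), and let $\alpha,\gamma\in\Phi_{\ap}(c)$ satisfy $(\alpha||\gamma)_c=1=(\gamma||\alpha)_c$. Since $\sigma_1$ preserves the compatibility degree, the pair $(\sigma_1(\alpha),\sigma_1(\gamma))$ is exchangeable in $\Phi_{\ap}(c')$, so the inductive hypothesis applies to it. The left-hand side of the desired inequality transports cleanly via the invariance~(\ref{eqn:F-invariance}):
\begin{equation*}
F_c(\alpha)+F_c(\gamma)=F_{c'}(\sigma_1(\alpha))+F_{c'}(\sigma_1(\gamma)).
\end{equation*}

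For the right-hand side, I would extend $\sigma_1$ to the piecewise-linear self-map of $Q_\mathbb{R}$ that is linear on every maximal cone of $\mathcal{F}_c^\Phi$. Because $\sigma_1$ sends each $c$-cluster to a $c'$-cluster (compatibility being preserved), it identifies the fans $\mathcal{F}_c^\Phi$ and $\mathcal{F}_{c'}^\Phi$ cell by cell; the two continuous piecewise-linear functions $F_c$ and $F_{c'}\circ\sigma_1$ then agree on all generators of $\mathcal{F}_c^\Phi$ by~(\ref{eqn:F-invariance}) and hence everywhere. Combining the identity $F_c=F_{c'}\circ\sigma_1$ with the set equality~(\ref{eqn:sums-in-sums}) established inside the proof of Corollary~\ref{cor:c.e.-intersection}, I obtain
\begin{equation*}
\{F_c(\alpha+\gamma),F_c(\alpha\uplus_c\gamma)\}=\{F_{c'}(\sigma_1(\alpha)+\sigma_1(\gamma)),F_{c'}(\sigma_1(\alpha)\uplus_{c'}\sigma_1(\gamma))\},
\end{equation*}
and taking the maximum on both sides closes the inductive step.

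The main obstacle I anticipate is the piecewise-linear bookkeeping: $\sigma_1$ is not globally linear, so the identity $F_c=F_{c'}\circ\sigma_1$ on all of $Q_\mathbb{R}$ is not automatic from the pointwise invariance~(\ref{eqn:F-invariance}). Verifying it amounts to confirming that the $c$-cluster expansion of any vector $v$ is sent by $\sigma_1$ to the $c'$-cluster expansion of $\sigma_1(v)$; this is exactly what the compatibility-preserving property of $\sigma_1$ guarantees, and equation~(\ref{eqn:sums-in-sums}) then supplies the explicit identification needed to evaluate $F_c$ at the critical points $\alpha+\gamma$ and $\alpha\uplus_c\gamma$.
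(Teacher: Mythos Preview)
Your argument is correct and follows essentially the same route as the paper: reduce to the bipartite case via Corollary~\ref{cor:bipartite-summary_2}\,(\ref{cor:bipartite-inequalities}), then transport the inequality along elementary moves using the invariance~(\ref{eqn:F-invariance}) and the identity~(\ref{eqn:sums-in-sums}). The only difference is cosmetic (you assume the statement for $c'=s_1cs_1$ and deduce it for $c$, while the paper goes in the opposite direction), and you are in fact more explicit than the paper about why $F_c=F_{c'}\circ\sigma_1$ holds on the relevant vectors, a point the paper leaves implicit when it applies~(\ref{eqn:F-invariance}) to $\alpha+\gamma$ and $\alpha\uplus_c\gamma$ rather than to single roots.
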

\begin{proof}
	The bipartite case was taken care of by point \ref{cor:bipartite-inequalities}
	in Corollary \ref{cor:bipartite-summary_2}.  Let $c=s_1\cdots s_n$ be any
	Coxeter element. Using elementary moves, (\ref{eqn:sums-in-sums}) and
	(\ref{eqn:F-invariance}) we get
	\begin{align*}
		F_{s_1cs_1}(\sigma_1(\alpha))+&F_{s_1cs_1}(\sigma_1(\gamma))
		=
		F_c(\alpha)+F_c(\gamma)\\
		&>\max\left\{F_c(\alpha+\gamma),F_c(\alpha\uplus_c\gamma)\right\}\\
		&=\max\left\{ F_{s_1cs_1}(\sigma_1(\alpha+\gamma)),F_{s_1cs_1}(\sigma_1(\alpha\uplus_c\gamma))\right\}\\
		&=\max\left\{ F_{s_1cs_1}(\sigma_1(\alpha)+\sigma_1(\gamma)),	F_{s_1cs_1}(\sigma_1(\alpha)\uplus_{s_1cs_1}\sigma_1(\gamma))\right\}
	\end{align*}
	as desired.

\end{proof}

\begin{proof}[Proof of Theorem \ref{thm-phi:polytopality}]
	It is enough to note that the Proposition \ref{prop:inequalities} together with Lemma
	\ref{lemma:reduction-to-c.e.} satisfy the requirements of Lemma
	\ref{lemma:criterion-polytopality}.

\end{proof}

\section{Relation between $\mathcal{F}_c^\Pi$ and the $c$-Cambrian fan $\mathcal{F}_c^C$}
\label{sec:cambrian}
We start by recalling some results and terminology from
\cite{sortable-polytopality}.

\begin{definition}
	(cf. Proposition 1.1 in \cite{sortable-polytopality}) Fix a Coxeter element
	$c$ and call an element $w\in W$ a \emph{$c$-singleton} if $w$ is both
	$c$-sortable and $c$-antisortable.
\end{definition}

Note that both $w_0$ and the identity element of $W$ are $c$-singletons for any
choice of $c$.  Denote by $\mathbf{w}_0$ the $c$-sorting word of $w_0$. 

\begin{theorem}
	(cf. Theorem 1.2 in \cite{sortable-polytopality}) An element $w\in W$ is a
	$c$-singleton if and only if it has a reduced expression which is a prefix of
	$\mathbf{w}_0$ up to commutations.
	\label{thm:singleton-is-prefix}
\end{theorem}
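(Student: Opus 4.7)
The plan is to prove both implications by working directly with $c$-factorizations and using the descending chain characterization of sortability.

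For the easier direction, suppose $w$ has a reduced expression $\mathbf{w}$ which is, up to commutations, a prefix of $\mathbf{w}_0$. Write the $c$-sorting word of $w_0$ as $\mathbf{w}_0 = c_{I_1} c_{I_2} \cdots c_{I_N}$ with $I_1\supseteq I_2\supseteq\cdots\supseteq I_N$. Applying commutations to $\mathbf{w}$ I can assume it is literally a prefix, hence of the form $c_{I_1}\cdots c_{I_{j-1}} c_{I'_j}$ where $I'_j\subseteq I_j$ and $c_{I'_j}$ is itself a prefix of $c_{I_j}$ as a subword of the chosen reduced expression of $c$. This exhibits the $c$-factorization $I_1\supseteq\cdots\supseteq I_{j-1}\supseteq I'_j$ of $w$, and by the independence of $c$-factorizations from the reduced expression chosen for $c$ one concludes that $w$ is $c$-sortable. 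For $c$-antisortability one examines the complementary suffix $c_{I_j\setminus I'_j} c_{I_{j+1}}\cdots c_{I_N}$ of $\mathbf{w}_0$, which is a reduced expression for $w^{-1}w_0$. Conjugation by $w_0$ (equivalently, applying the $\ast$-automorphism to the indices and reversing) turns this suffix into a word on the same alphabet that is read along $c^{-1}$, and the descending chain $I_j\setminus I'_j, I_{j+1},\ldots, I_N$ persists under this transformation, so $ww_0$ is $c^{-1}$-sortable.

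For the converse, suppose $w$ is a $c$-singleton with $c$-sorting word $c_{K_1}\cdots c_{K_k}$, $K_1\supseteq\cdots\supseteq K_k$. I would argue by induction on $\ell(w_0)-\ell(w)$, with the base case $w=w_0$ being trivially $\mathbf{w}_0$ itself. For the step, the aim is to exhibit a simple reflection $s$ such that $ws$ is again a $c$-singleton and $\ell(ws)=\ell(w)+1$. Such an $s$ should be the first letter of the ``next'' block $c_{K_{k+1}}$ allowed by $w_0$'s factorization: more precisely, since $ww_0$ is $c^{-1}$-sortable and its $c^{-1}$-factorization must fit together with that of $w$ to reconstitute $w_0$, there is a simple reflection that extends both $w$ towards $w_0$ while preserving the descending chain condition on both sides simultaneously. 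Once such $s$ is produced, induction gives $ws$ a reduced expression that is a prefix of $\mathbf{w}_0$ up to commutations; dropping its last letter (and possibly adjusting by commutations to move $s$ to the end) yields the desired reduced expression of $w$.

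The main obstacle is showing the existence of the extending reflection $s$ in the inductive step: this requires dovetailing the $c$-factorization of $w$ with the $c^{-1}$-factorization of $ww_0$ through the duality induced by multiplication by $w_0$ and the $\ast$-automorphism. Equivalently, one must verify that if $w_0 = c_{I_1}\cdots c_{I_N}$ is the $c$-sorting word of $w_0$, then the $c$-singletons are exactly the elements whose $c$-factorization $K_1\supseteq\cdots\supseteq K_k$ satisfies $K_i = I_i$ for $i<k$ and $K_k\subseteq I_k$ with $c_{K_k}$ a prefix of $c_{I_k}$ as subwords of $c$. Once that combinatorial description is in hand, both directions of the theorem collapse to straightforward bookkeeping with commutation classes.
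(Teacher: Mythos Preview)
The paper does not prove this theorem: it is quoted verbatim from \cite{sortable-polytopality} (Theorem~1.2 there) and used as a black box in Section~\ref{sec:cambrian}. There is therefore no ``paper's own proof'' to compare against; your proposal is an attempt to supply an independent proof of an external result.

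As a self-contained argument, the proposal has real gaps. In the forward direction, the antisortability step is not established. The suffix $c_{I_j\setminus I'_j}c_{I_{j+1}}\cdots c_{I_N}$ is a reduced expression for $w^{-1}w_0$, and you are right that reversing it and applying the $\ast$-automorphism produces a reduced expression for $ww_0$. But your assertion that ``the descending chain $I_j\setminus I'_j, I_{j+1},\ldots, I_N$ persists under this transformation'' is unjustified and, as written, false: after reversal the blocks occur in the order $I_N,\ldots,I_{j+1},I_j\setminus I'_j$, and since the original chain is $I_{j+1}\supseteq\cdots\supseteq I_N$, the reversed chain is \emph{ascending}, not descending. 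Nor is there any reason for $I_{j+1}\subseteq I_j\setminus I'_j$. Turning this into a valid $c^{-1}$-sorting word for $ww_0$ is precisely where the substance of the argument in \cite{sortable-polytopality} lies.

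For the converse you correctly isolate the crux --- producing a simple reflection $s$ with $ws$ again a $c$-singleton and $\ell(ws)=\ell(w)+1$ --- but you do not resolve it. Your final paragraph restates the theorem in the form ``the $c$-singletons are exactly the elements whose $c$-factorization $K_1\supseteq\cdots\supseteq K_k$ satisfies $K_i=I_i$ for $i<k$ and $c_{K_k}$ a prefix of $c_{I_k}$'', which is equivalent to what is to be proved, so the induction never actually closes. The proof in \cite{sortable-polytopality} does not proceed by this kind of word combinatorics alone; it uses the lattice-theoretic description of $c$-singletons as the elements whose Cambrian congruence class $[\pi^c_\downarrow(w),\pi_c^\uparrow(w)]$ is a single point, together with structural results about Cambrian lattices. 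A purely factorization-based argument along your lines would need substantially more input to fill these gaps.
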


\begin{theorem}
	(cf. Theorem 2.6 in \cite{sortable-polytopality}) For any ray $\rho$ of
	$\mathcal{F}_c^C$, there exist a unique fundamental weight $\omega_i$ and a
	(non unique) $c$-singleton $w$ such that 
	$$
		\rho=\mathbb{R}_+\cdot w\omega_i
		\,.
	$$
	Conversely for any $c$-singleton $w$ and any fundamental weight $\omega_i$,
	the weight $w\omega_i$, lies on a ray of $\mathcal{F}_c^C$.
	\label{thm:rays-of-FC}
\end{theorem}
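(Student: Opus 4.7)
The plan is to exploit the fact that the Cambrian fan $\mathcal{F}_c^C$ is obtained from the Coxeter fan $\mathcal{F}$ by the coarsening described just before Theorem \ref{thm:rays-of-FC}: gluing together chambers $u(D)$ for $u$ ranging over a single $\sim$-equivalence class. No new rays can appear under such a coarsening, so every ray of $\mathcal{F}_c^C$ is already a ray of $\mathcal{F}$. Since the rays of $\mathcal{F}$ are exactly the $\mathbb{R}_+\cdot w\omega_i$ with $w\in W$ and $i\in I$, and since distinct fundamental weights $\omega_i$ and $\omega_j$ lie on $W$-inequivalent rays of the fundamental chamber $D$, the fundamental weight $\omega_i$ attached to a given ray of $\mathcal{F}_c^C$ is automatically unique.

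For the direction asserting that any pair $(w,\omega_i)$ with $w$ a $c$-singleton produces a ray, I would invoke the Proposition attributed to \cite{sortable-cambrian} that was quoted above: a $c$-singleton $w$ satisfies $\pi^c_\downarrow(w)=\pi_c^\uparrow(w)=w$, so its $\sim$-equivalence class collapses to the trivial interval $\{w\}$. The Weyl chamber $w(D)$ is therefore itself a full maximal cone of $\mathcal{F}_c^C$, and each of its $n$ extreme rays $\mathbb{R}_+\cdot w\omega_i$ for $i\in I$ survives as a ray of the Cambrian fan.

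The harder direction is to produce a $c$-singleton representative for every ray. Given a ray $\rho=\mathbb{R}_+\cdot v\omega_i$ of $\mathcal{F}_c^C$, choose a maximal Cambrian cone $C$ of which $\rho$ is an extreme ray, so that $C=\bigcup_{u\in[\pi^c_\downarrow(v'),\pi_c^\uparrow(v')]}u(D)$ for a unique $c$-sortable element $v'$. The plan is to identify a distinguished element $w$ in the equivalence interval whose chamber $w(D)$ still carries $\rho$ as an extreme ray, and then show, using Theorem \ref{thm:singleton-is-prefix} to reinterpret the singleton condition as being a prefix of $\mathbf{w}_0$ up to commutations, that this $w$ is simultaneously $c$-sortable and $c$-antisortable. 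The local picture to exploit is this: if $u$ lies strictly in the interior of the interval then $u$ admits a weak-order cover relation $u\to us_j$ that stays within the same $\sim$-class, the shared wall of $u(D)$ and $us_j(D)$ becomes interior to $C$ upon gluing, and the extreme rays of $u(D)$ that lie on that wall (namely the $u\omega_k$ with $k\neq j$) cease to be extreme rays of $C$.

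I expect this last local analysis to be the main obstacle: turning the geometric statement ``$\rho$ lies on the boundary of $C$'' into the combinatorial statement ``$\rho$ is realized by a $c$-singleton'' requires a careful description of which covers of the weak order get identified by $\sim$. This is essentially the shard-theoretic picture of Reading, combined with the standard fact that the extreme rays of any chamber $u(D)$ are spanned by the $u\omega_i$ while its walls are orthogonal to the $u\alpha_j$; the delicate point is to show that the distinguished element $w$ produced by the preceding local argument truly coincides with both its own sortable lower bound $\pi^c_\downarrow(w)$ and its antisortable upper bound $\pi_c^\uparrow(w)$.
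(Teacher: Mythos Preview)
The paper does not give its own proof of this statement: Theorem~\ref{thm:rays-of-FC} is quoted verbatim as Theorem~2.6 of \cite{sortable-polytopality} and is used as a black box in Section~\ref{sec:cambrian}. So there is nothing in the paper to compare your attempt against.

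As for the proposal itself, your converse direction is sound: a $c$-singleton $w$ has trivial $\sim$-class by the quoted proposition from \cite{sortable-cambrian}, so $w(D)$ is already a maximal Cambrian cone and its extreme rays $\mathbb{R}_+\cdot w\omega_i$ survive. Your uniqueness-of-$\omega_i$ argument is also fine, since rays of the Coxeter fan through distinct fundamental weights are $W$-inequivalent. The forward direction, however, is only a sketch, and you acknowledge the gap yourself. The step ``choose a distinguished element $w$ in the interval whose chamber still carries $\rho$ and show it is a $c$-singleton'' is precisely the substance of the theorem; the local wall-removal heuristic you describe does not by itself force the surviving chamber to be both $c$-sortable and $c$-antisortable. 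In \cite{sortable-polytopality} this is handled with the specific combinatorics of $c$-sortable elements (skip sets, cover reflections, and the recursive structure of $\pi^c_\downarrow$), not by a generic geometric argument about fan coarsenings. If you want a self-contained proof you will need to import that machinery; the paper under review simply cites the result.
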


We will use Theorems \ref{thm:singleton-is-prefix} and \ref{thm:rays-of-FC} to
relate the rays of $\mathcal{F}_c^C$ to the elements of the set $\Pi(c)$.

\begin{definition}
	Given a Coxeter element $c\in W$, we call a reduced expression $c=s_1\cdots
	s_n$ \emph{greedy} if
  $$
		h(i,c)\ge h(j,c)
	$$
	whenever $i<j$.
\end{definition}

\begin{lemma}
	Any Coxeter element $c$ admits a greedy reduced expression.
\end{lemma}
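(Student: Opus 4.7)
Since two reduced expressions of $c$ differ only by commutations of pairs of commuting simple reflections, the reduced expressions of $c$ are in bijection with the linear extensions of the partial order $\prec_c$ on $I$ (where $i\prec_c j$ means that $i$ and $j$ are adjacent in the Dynkin diagram and $s_i$ precedes $s_j$ in every reduced expression of $c$). A greedy reduced expression thus exists if and only if $(I,\prec_c)$ admits a linear extension along which $h(\cdot,c)$ is weakly decreasing, which in turn holds if and only if $h(\cdot,c)$ is order-reversing on $(I,\prec_c)$: $h(i,c)\ge h(j,c)$ whenever $i$ and $j$ are adjacent with $i\prec_c j$. The lemma therefore reduces to establishing this single monotonicity.

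The plan is to propagate the monotonicity from the bipartite case via elementary moves. If $c=t$ is bipartite, the identity $t^{h/2}=w_0$ combined with $h(i,c)+h(i^*,c)=h$ (Proposition 1.7 in \cite{shih}) forces $h(i,t)=h/2$ uniformly in $i$, so the inequality holds trivially. For a general Coxeter element $c$, Lemma \ref{lemma:coxeter-connected} connects a bipartite $t$ to $c$ by a sequence of elementary moves, so it suffices to show that the order-reversing property is preserved under a single move $c\mapsto s_kcs_k$ with $s_k$ initial in $c$. Here the bijection $\sigma_k:\Phi_{\ap}(c)\to\Phi_{\ap}(s_kcs_k)$ from Section \ref{sec:technical-results} intertwines $\tau_c$ with $\tau_{s_kcs_k}$, hence transports $\tau_c$-orbits to $\tau_{s_kcs_k}$-orbits; since $h(i,c)+1$ is the distance in the $\tau_c^{\Phi}$-orbit from $-\beta_i^c$ to $-\beta_{i^*}^c$, reading off the image of this orbit under $\sigma_k$ gives an explicit comparison between $h(i,c)$ and $h(i,s_kcs_k)$.

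The main technical obstacle will be the case analysis at a single elementary move. Under conjugation by an initial $s_k$, the partial order $\prec_c$ changes only at the vertex $k$: in $c$ the reflection $s_k$ is initial and precedes all its Dynkin neighbors, while in $s_kcs_k$ it is final and is preceded by them; at the same time the values $h(k,\cdot)$ and $h(k_\#,\cdot)$ for each Dynkin neighbor $k_\#$ of $k$ shift. I expect to show by direct tracking of the $\sigma_k$-image of the relevant $\tau_c^{\Phi}$-orbits that these two effects cancel precisely, so that whenever $i\prec_c j$ were adjacent and satisfied $h(i,c)\ge h(j,c)$, the corresponding pair in $s_kcs_k$ still satisfies the analogous inequality. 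Once the order-reversing property of $h(\cdot,c)$ on $(I,\prec_c)$ is established, the greedy reduced expression is obtained by choosing any linear extension of $(I,\prec_c)$ in weakly decreasing order of $h(\cdot,c)$, completing the argument.
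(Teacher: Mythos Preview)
Your reduction to the monotonicity statement ``$i\prec_c j$ adjacent $\Rightarrow h(i,c)\ge h(j,c)$'' is correct and is exactly the content the paper uses. However, your proposed proof of that monotonicity has a genuine gap in the base case. You claim that for a bipartite Coxeter element $t$ one has $t^{h/2}=w_0$ and hence $h(i,t)=h/2$ for all $i$. This fails whenever the Coxeter number $h$ is odd (type $A_{2k}$): for instance in $A_2$ with $t=s_1s_2$ one computes $h(1,t)=2$ and $h(2,t)=1$, so the values are not constant. The desired inequality still holds in this example, but not for the reason you give, so the base of your induction is not established. In addition, the inductive step under an elementary move is only announced (``I expect to show by direct tracking\ldots''), not carried out; the interaction between the shift in $h(\cdot,c)$ and the flip of the single covering relation at $k$ would still need a precise argument.

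By contrast, the paper's proof is a two-line bubble sort: the needed monotonicity $i\prec_c j\Rightarrow h(i,c)\ge h(j,c)$ is already the content of Proposition~1.6 in \cite{shih}, so no induction on Coxeter elements is required. Starting from any reduced expression, whenever $h(i,c)<h(i{+}1,c)$ at consecutive positions the two reflections must commute (otherwise $i\prec_c i{+}1$ contradicts Proposition~1.6), so one swaps and repeats. Your elaborate machinery of $\sigma_k$-maps and $\tau_c$-orbits is reinventing, in a more fragile way, a fact that is already available off the shelf.
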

\begin{proof}
	Consider a reduced expression $s_1\cdots s_n$ for $c$ and suppose $h(i,c)<
	h(i+1,c)$ for some $i$. Let $i$ be the minimal index with this property. Using
	Proposition 1.6 in \cite{shih} we can deduce that $i$ and $i+1$ are not
	connected in the Coxeter graph. Indeed if they were connected then we would
	have $i\prec_c i+1$ and thus $h(i,c)\ge h(i+1,c)$ which is in contradiction
	with our assumption. Therefore $s_i$ and $s_{i+1}$ commute and $s_1\cdots
	s_{i+1}s_i\cdots s_n$ is another reduced expression for $c$; we can now
	conclude by induction.

\end{proof}

\begin{remark}
	Greedy reduced expressions, in general, are not unique; for example
	$s_2s_4s_1s_3$ and $s_4s_2s_1s_3$ are both greedy reduced expression of the
	same Coxeter element in type $A_4$ (again we used the standard numeration of
	roots from \cite{bourbaki}).
\end{remark}

\begin{lemma}
	For any vertices $i$ and $j$ of the Dynkin diagram at distance $d$ from each
	other the difference  $h(i,c) - h(j,c)$ is at most $d$.
  \label{old-commuting-distance}
\end{lemma}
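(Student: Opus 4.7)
The plan is to argue by induction on $d$, with the base case $d = 0$ tautological. For the inductive step I reduce to the case $d = 1$ (adjacent vertices) via the triangle inequality applied along a geodesic $i = i_0, i_1, \ldots, i_d = j$ in the Dynkin diagram: if every consecutive pair has $|h(i_k, c) - h(i_{k+1}, c)| \le 1$, then $|h(i, c) - h(j, c)| \le d$ follows at once. Note that the statement is symmetric in $i$ and $j$, so it suffices to bound the difference on one side.

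For the case $d = 1$, let $i, j$ be adjacent in the Dynkin diagram. By symmetry I may assume $i \prec_c j$, so Proposition 1.6 of \cite{shih} already yields $h(i, c) \ge h(j, c)$; the remaining task is the matching upper bound $h(i, c) \le h(j, c) + 1$. Translating through the isomorphism $\phi_c$ of Proposition \ref{prop:translation-pi-phi} (which sends $c^m \omega_i$ to $c^{m-1}\beta_i^c$ for $m \ge 1$), the claim becomes the statement that $c^{h(j,c)}\beta_i^c \in \{\pm \beta_{i^*}^c\}$. To establish this I would first invoke the elementary-move machinery of Section \ref{sec:technical-results} (in particular the $\sigma$-maps together with an application of Lemma \ref{lemma:w-without-i}) to conjugate $c$ into a reduced expression in which $s_i$ and $s_j$ are consecutive, say $c = u \cdot s_i s_j \cdot v$ with $u, v$ containing neither $s_i$ nor $s_j$. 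Since conjugation by initial simple reflections $s_k$ with $k \notin \{i, j\}$ preserves the $\tau$-orbit structures of $\omega_i$ and $\omega_j$, the integers $h(i, c)$ and $h(j, c)$ are unchanged under this normalization.

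In the normalized configuration, direct calculation with $\beta_k^c = s_n \cdots s_{k+1}\alpha_k$ and the adjacency relation $a_{ij} \ne 0$ shows that the positive-root trajectories $\{c^m \beta_i^c\}$ and $\{c^m \beta_j^c\}$ are offset by a single application of $c$, so they must reach the negative roots $\{-\beta_{i^*}^c\}$ and $\{-\beta_{j^*}^c\}$ within one step of each other, giving $h(i, c) - h(j, c) \in \{0, 1\}$ as required. The main obstacle is precisely this final explicit computation: because $u$ and $v$ may contain reflections adjacent in the Dynkin diagram to $i$ or $j$, tracking the interplay between the two $c$-orbits under iteration requires careful bookkeeping. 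A cleaner route, if one is willing to step outside the paper's self-contained framework, would be to invoke the Auslander--Reiten interpretation of $h(i, c)$ as the $\tau$-orbit length of the $i$-th indecomposable projective of the path algebra of the quiver encoding $c$, which makes the bound $|h(i, c) - h(j, c)| \le 1$ for adjacent vertices essentially immediate.
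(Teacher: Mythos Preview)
Your overall plan---reduce to $d=1$ along a geodesic in the Dynkin diagram and telescope---matches the paper's. The divergence is entirely in the adjacent case: you extract only $h(i,c)\ge h(j,c)$ from Proposition~1.6 of \cite{shih} (for $i\prec_c j$) and then mount a separate attack on the upper bound $h(i,c)\le h(j,c)+1$. The paper's proof is a one-liner: it cites Proposition~1.6 of \cite{shih} for the two-sided bound $|h(i,c)-h(j,c)|\le 1$ on adjacent vertices (the printed ``$<1$'' is a typo for ``$\le 1$'') and then telescopes along the path. Your entire second half is therefore unnecessary once the cited proposition is read in full.

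Taken on its own terms, the sketch you give for the upper bound also has real gaps. The translation through $\phi_c$ is off: since $\phi_c=c^{-1}-1$ commutes with $c$, one has $\phi_c(c^m\omega_i)=c^m\phi_c(\omega_i)=-c^m\beta_i^c$, not $c^{m-1}\beta_i^c$. Lemma~\ref{lemma:w-without-i} only guarantees a sequence of elementary moves avoiding a \emph{single} reflection, not two, so invoking it to avoid both $s_i$ and $s_j$ is unjustified (Lemma~\ref{lemma:distant-conjugation} handles two, but only when one of them indexes a leaf). The assertion that $h(i,c)$ and $h(j,c)$ are preserved under conjugation by an initial $s_k$ with $k\notin\{i,j\}$ is false when $k\in\{i^*,j^*\}$: for instance in type $A_3$ with $c=s_3s_2s_1$, conjugating by the initial $s_3=s_{1^*}$ changes $h(1;c)$ from $1$ to $2$. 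Finally, the decisive ``direct calculation'' that the two $c$-orbits are offset by one step is never actually carried out---you yourself flag it as the main obstacle. The Auslander--Reiten alternative you mention is correct in spirit but, as you note, lies outside the paper's framework.
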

\begin{proof}
	It is enough to observe that if $i$ and $j$ are adjacent then either $i\prec_c
	j$ or $j\prec_c i$ so $|h(i,c) - h(j,c)|<1$ by Proposition 1.6 in \cite{shih}.
	Therefore each step on the minimal path in $I$  connecting $i$ and $j$
	contribute at most 1 to the difference $h(i,c) - h(j,c)$.

\end{proof}

Fix a greedy reduced expression for $c$. With some abuse of notation, we denote
this expression also as $c$.  Denote by $\mathbf{w}_m$ the sub-word of $c^m$
obtained by omitting in the $l$-th copy of $c$  all the transpositions $s_i$
such that $h(i,c)<l$. Observe that having taken a greedy reduced expression for
$c$, if we write $I_1, \dots I_m$ for the $c$-factorization of
$\mathbf{w}_m$, then
$$
	I_1\supseteq I_2\cdots\supseteq I_m.
$$
In particular if $\mathbf{w}_m$ is a reduced word then $w_m$, the corresponding element
of $W$, is $c$-sortable.
Let $m_c=\max_{i\in I}\{h(i,c)\}$, our goal is to show that the word
$\mathbf{w}_{m_c}$ is a reduced expression for $w_0$.

\begin{proposition}
	For any $i\in I$ and any $m\le h(i,c)$ we have $c^m\omega_i=\mathbf{w}_m\omega_i$.
	\label{prop:w-is-c}
\end{proposition}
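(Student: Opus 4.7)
The plan is to proceed by induction on $m$. The base case $m = 1$ is immediate since $\mathbf{w}_1 = c_{I_1} = c$ (as $I_1 = I$).

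For the inductive step, the key structural feature of the greedy expression $c = s_1 s_2 \cdots s_n$ is that every $c_{I_l}$ is a prefix of $c$, so $c = c_{I_l} \cdot r_l$ where $r_l$ is the suffix composed exactly of those $s_j$ with $h(j,c) < l$. A first application of this, taking $l = m$, gives $c_{I_m}\omega_i = c\omega_i$: every reflection appearing in $r_m$ has $h(j,c) < m \le h(i,c)$, hence $j \ne i$, hence $s_j\omega_i = \omega_i$, so $r_m\omega_i = \omega_i$ and premultiplying by $c_{I_m}$ yields $c\omega_i$.

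Because $c\omega_i$ is not a fundamental weight, the inductive hypothesis cannot be plugged in directly to finish. Instead, I would prove by \emph{reverse} induction on $l$ the stronger intermediate statement
\[
  c_{I_l} c_{I_{l+1}} \cdots c_{I_m}\omega_i = c^{m-l+1}\omega_i
  \qquad (1 \le l \le m),
\]
of which the desired identity is the case $l = 1$. The base $l = m$ is the observation above. For the inductive step $l+1 \to l$, setting $\mu := c^{m-l}\omega_i$ (the value produced by the previous step), it suffices to show $c_{I_l}\mu = c\mu$, equivalently that $r_l$ fixes $\mu$, equivalently that $c^{m-l}\omega_i$ is fixed by every $s_j$ with $h(j,c) < l$.

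Letting $p = m - l$, what remains is the key technical lemma: \emph{whenever $h(i,c) - h(j,c) \ge p+1$, the weight $c^p\omega_i$ is fixed by $s_j$}. The base $p = 0$ is trivial since then $h(i,c) > h(j,c)$ forces $i \ne j$. For $p \ge 1$, my plan is to iterate the identity $c\omega_i - \omega_i = -s_1\cdots s_{i-1}\alpha_i$ (which one reads off from Proposition \ref{prop:translation-pi-phi}) to express $c^p\omega_i - \omega_i$ as a sum of $p$ explicit roots, and then pair with $\alpha_j^\vee$; Lemma \ref{old-commuting-distance} converts the hypothesis $h(i,c) - h(j,c) \ge p+1$ into the Dynkin-diagram bound $d(i,j) \ge p+1$, which should force each summand to pair trivially with $\alpha_j^\vee$. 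I expect this key lemma to be the main obstacle, since making the pairing argument uniform in $p$ requires tracking how $c$ propagates the support of roots. An alternative route I would try if the direct argument becomes unwieldy is a rank induction using the restriction maps $\iota$ from Section \ref{sec:phi-ap} to reduce, once the Dynkin diagram splits appropriately, to a proper subdiagram where the lemma holds by hypothesis.
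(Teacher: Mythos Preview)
Your reverse-induction framework is correct in outline, but the proof is incomplete: you explicitly flag the key lemma (that $h(i,c)-h(j,c)\ge p+1$ forces $s_j$ to fix $c^p\omega_i$) as ``the main obstacle'' and only sketch two possible attacks, neither of which is carried out. The support-tracking argument via pairing with $\alpha_j^\vee$ is delicate --- the root $c\omega_i-\omega_i=s_1\cdots s_{i-1}\alpha_i$ can already have large support in the $\alpha$-basis, so you would have to control support in the $\omega$-basis through successive applications of $c$, and it is not clear this stays uniform across Dynkin types. The rank-induction alternative is even vaguer.

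The paper's proof avoids all of this by a single global commutation argument. It writes down the complementary word $\mathbf{w}=c_{I\setminus I_1}\cdots c_{I\setminus I_m}$ (the product of your suffixes $r_l$), observes that $\mathbf{w}$ contains no $s_i$ since $h(i,c)\ge m$ and hence fixes $\omega_i$, and then shows $\mathbf{w}_m\mathbf{w}=c^m$ in $W$ by sliding each $c_{I\setminus I_l}$ leftward past $c_{I_{l+1}},\dots,c_{I_m}$. The sliding is legal because any $k\in I\setminus I_l$ and any $j\in I_{l+1}$ satisfy $h(j,c)-h(k,c)\ge 2$, so Lemma~\ref{old-commuting-distance} gives $d(j,k)\ge 2$ and $s_j,s_k$ commute. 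Ironically, your key lemma also falls to this same commutation idea by a short induction on $p$: assuming it for $p-1$, one gets $c^p\omega_i=c_{J}\,c^{p-1}\omega_i$ with $J=\{k:h(k,c)\ge h(i,c)-p+1\}$, and any $s_j$ with $h(i,c)-h(j,c)\ge p+1$ commutes past $c_J$ (difference of $h$-values $\ge 2$) and then fixes $c^{p-1}\omega_i$ by the inductive hypothesis. So the missing ingredient in your approach is precisely the commutation step that the paper uses directly; once you see it, the layered induction becomes unnecessary.
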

\begin{proof}
	Let $I_1$, \dots $I_m$ be the $c$-factorization of $\mathbf{w}_m$ with respect to the
	fixed greedy reduced expression of $c$.  Observe that, for any $j$ appearing
	in  $I_{l+1}$ and for any  $k$ missing from $I_l$
  $$
		|h(k,c)- h(j,c)|\ge 2
	$$ 
	and so, by Lemma \ref{old-commuting-distance}, $s_k$ and $s_j$ commute.
	Consider now the element 
	$$
	\mathbf{w}=c_{I\setminus I_1}c_{I\setminus I_2}\cdots c_{I\setminus I_m}
	$$
	Since $m\le h(i,c)$, the reflection  $s_i$ will not appear in $\mathbf{w}$ and so
	$\mathbf{w}\omega_i=\omega_i$ hence $\mathbf{w}_m\mathbf{w}\omega_i=\mathbf{w}_m\omega_i$. Form the previous
	consideration we can move all the elements in the $l$-th copy of $c$ in
	$\mathbf{w}$
	up to the $l$-th block of $\mathbf{w}_m$ and obtain
	$$
		c^m\omega_i=
		c_{I_1}c_{I\setminus I_1}\cdots c_{I_m}c_{I\setminus I_m}\omega_i=
		\mathbf{w}_m\mathbf{w}\omega_i=
		\mathbf{w}_m\omega_i
		\,.
	$$

\end{proof}

\begin{proposition}
	$\mathbf{w}_{m_c}$ is a reduced expression of $w_0$.
\end{proposition}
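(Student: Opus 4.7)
The plan is to show two things: that $\mathbf{w}_{m_c}$ has length equal to $\ell(w_0)=\frac{nh}{2}$, and that $\mathbf{w}_{m_c}$ acts on $W$ as $w_0$; these together force $\mathbf{w}_{m_c}$ to be a reduced expression of $w_0$.

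For the length count, note that $s_j$ appears in the $l$-th block of $\mathbf{w}_{m_c}$ if and only if $h(j,c)\ge l$, so $s_j$ contributes exactly $h(j,c)$ letters to $\mathbf{w}_{m_c}$. Consequently
\begin{equation*}
	\ell(\mathbf{w}_{m_c})\le \sum_{j\in I}h(j,c)
	=\frac{1}{2}\sum_{j\in I}\left(h(j,c)+h(j^*,c)\right)
	=\frac{nh}{2}=\ell(w_0),
\end{equation*}
where we used Proposition 1.7 in \cite{shih}.

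For the second step I would apply Proposition \ref{prop:w-is-c} at $m=h(i,c)$ to obtain
\begin{equation*}
	\mathbf{w}_{h(i,c)}\,\omega_i=c^{h(i,c)}\omega_i=-\omega_{i^*}.
\end{equation*}
Writing $\mathbf{w}_{m_c}=\mathbf{w}_{h(i,c)}\cdot c_{I_{h(i,c)+1}}\cdots c_{I_{m_c}}$, and observing that for every $l>h(i,c)$ the reflection $s_i$ is by definition omitted from $c_{I_l}$, the remaining tail is a product of simple reflections $s_j$ with $j\ne i$. Since $s_j\omega_i=\omega_i$ whenever $j\ne i$, the tail fixes $\omega_i$, so
\begin{equation*}
	\mathbf{w}_{m_c}\,\omega_i=\mathbf{w}_{h(i,c)}\,\omega_i=-\omega_{i^*}
\end{equation*}
for every $i\in I$. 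As $w_0$ is the unique element of $W$ with this property, $\mathbf{w}_{m_c}=w_0$ as elements of $W$.

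Combining the two steps: any word representing $w_0$ has length at least $\ell(w_0)=\frac{nh}{2}$, and we already showed the opposite inequality, so equality must hold and $\mathbf{w}_{m_c}$ is reduced. The main (and only) delicate point is verifying that the blocks appearing after $\mathbf{w}_{h(i,c)}$ in $\mathbf{w}_{m_c}$ really fix $\omega_i$; this reduces immediately to the elementary fact that $s_j\omega_i=\omega_i$ for $j\ne i$, so no serious obstacle arises.
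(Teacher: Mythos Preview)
Your proof is correct and follows essentially the same approach as the paper: both arguments compute the letter count as $\sum_i h(i,c)=\tfrac{nh}{2}$ via Proposition~1.7 of \cite{shih}, and both use Proposition~\ref{prop:w-is-c} together with the observation that the tail $c_{I_{h(i,c)+1}}\cdots c_{I_{m_c}}$ fixes $\omega_i$ to conclude $\mathbf{w}_{m_c}\omega_i=-\omega_{i^*}$. The paper simply reverses the order of the two steps and leaves the ``tail fixes $\omega_i$'' observation implicit, whereas you spell it out.
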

\begin{proof}
	To show that $\mathbf{w}_{m_c}$ is an expression  of $w_0$ it is enough to
	show that both $w_0$ and $\mathbf{w}_{m_c}$ act in the same way on the weight
	space (the representation of $W$ as reflection group of $P_\mathbb{R}$ is
	faithful).  Fundamental weights form a basis of the weight space
	so it is enough to see how $w_0$ and $\mathbf{w}_{m_c}$  act on them.  For any $i$ we
	have $w_0\omega_i=-\omega_{i^*}$.  On the other hand, using Proposition
	\ref{prop:w-is-c}, we conclude that
  $$
		\mathbf{w}_{m_c}\omega_i=
		\mathbf{w}_{h(i,c)}\omega_i=
		c^{h(i,c)}\omega_i=
		-\omega_{i^*}
		\,.
	$$
	Therefore $\mathbf{w}_{m_c}$ is a word representing $w_0$.  The fact that
	it is a reduced
	expression follows from considerations on its length; each reflection $s_i$
	appears exactly $h(i,c)$ times in it.  Proposition 1.7 in \cite{shih} states
	that, for every $i$, the sum $h(i,c)+h(i^*,c)$ is equal to the Coxeter number
	$h$, hence
  $$
		\sum_{i\in I}\left( h(i,c)+h(i^*,c)\right)=|I|h=|\Phi|
	$$
  but in this way we are counting the contribution of each $i$ twice, i.e. 
  $$
		l(\mathbf{w}_{m_c})\le
		\sum_{i\in I}h(i,c)=
		\frac{1}{2}\sum_{i\in I}\left(h(i,c)+h(i^*,c)\right)=
		\frac{1}{2}|\Phi|=
		|\Phi_+|=
		l(w_0)
		\,.
	$$

\end{proof}

Note that, in view of last Proposition, for any $m\leq m_c$, $\mathbf{w}_m$ is a
reduced expression in $W$ (and $w_m$ is $c$-sortable).

\begin{proposition}
	Fix a greedy reduced expression for $c$. Then $\mathbf{w}_{m_c}$ is the
	lexicographically first reduced expression of $w_0$ as a sub-word of
	$c^\infty$. In other word $\mathbf{w}_{m_c}$ is the $c$-sorting word of $w_0$.
	\label{prop:w_m_w-is-c-sorting}
\end{proposition}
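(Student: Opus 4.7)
The plan is to show $\mathbf{w}_{m_c}$ is the lexicographically first reduced sub-word of $c^\infty$ expressing $w_0$, i.e., the $c$-sorting word. The preceding lemmas supply three ingredients that I will use: (i) $\mathbf{w}_{m_c}$ is itself a reduced expression of $w_0$ realized inside $c^\infty$; (ii) its $c$-factorization $J_1\supseteq J_2\supseteq\cdots\supseteq J_{m_c}$, with $J_l=\{j\in I:h(j,c)\ge l\}$, is decreasing nested (because the chosen reduced expression of $c$ is greedy); and (iii) Proposition \ref{prop:w-is-c}, giving $\mathbf{w}_{h(i,c)}\omega_i=-\omega_{i^*}$ for every $i\in I$.

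The argument proceeds by contradiction. Assume some sub-word $\mathbf{v}$ of $c^\infty$, lex-smaller than $\mathbf{w}_{m_c}$, is also a reduced expression of $w_0$. Compare the two slot by slot in $c^\infty$; at the first disagreement $(l,i)$, the lex-smaller word $\mathbf{v}$ must include the letter $s_i$ while $\mathbf{w}_{m_c}$ omits it, forcing $h(i,c)<l$ by construction of $\mathbf{w}_{m_c}$. Let $u$ denote the common prefix of $\mathbf{v}$ and $\mathbf{w}_{m_c}$ just before slot $(l,i)$. Reducedness of $\mathbf{v}$ demands $\ell(u s_i)>\ell(u)$, equivalently $u(\alpha_i)\in\Phi_+$.

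To derive a contradiction, I would factor $u=\mathbf{w}_{h(i,c)}\cdot u'$, where $u'$ collects the letters of $\mathbf{w}_{m_c}$ appearing between the end of copy $h(i,c)$ and slot $(l,i)$. Since $h(i,c)<l$, none of these letters is $s_i$, so $u'\in W_{I\setminus\{i\}}$. From $\mathbf{w}_{h(i,c)}\omega_i=-\omega_{i^*}=w_0\omega_i$ one deduces that $\mathbf{w}_{h(i,c)}^{-1}w_0$ stabilizes $\omega_i$, hence lies in $W_{I\setminus\{i\}}$, so $\mathbf{w}_{h(i,c)}=w_0 x$ with $x\in W_{I\setminus\{i\}}$. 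Therefore $u(\alpha_i)=w_0(x u'\alpha_i)$. Since every generator $s_j$ with $j\neq i$ alters only the $\alpha_j$-coefficient, the parabolic element $x u'\in W_{I\setminus\{i\}}$ leaves the $\alpha_i$-coefficient of $\alpha_i$ equal to $1$, so $x u'\alpha_i\in\Phi_+$, and then $w_0$ sends this positive root to a negative one, giving $u(\alpha_i)\in\Phi_-$, contradicting $u(\alpha_i)\in\Phi_+$.

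The main obstacle is the combinatorial bookkeeping needed to verify that $u'$ does indeed land in $W_{I\setminus\{i\}}$: this relies crucially on the greedy property of the chosen reduced expression of $c$, which guarantees that every block of $\mathbf{w}_{m_c}$ past copy $h(i,c)$ is supported on $\{j:h(j,c)>h(i,c)\}\subseteq I\setminus\{i\}$. Once this is in place, the rest of the argument is short, relying only on Proposition \ref{prop:w-is-c}, the standard identification $\mathrm{Stab}(\omega_i)=W_{I\setminus\{i\}}$, and the property that $w_0$ inverts all positive roots.
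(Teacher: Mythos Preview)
Your argument is correct and proves the same core fact as the paper: at every slot $(l,i)$ of $c^\infty$ that $\mathbf{w}_{m_c}$ skips (equivalently $h(i,c)<l$), the prefix $u$ of $\mathbf{w}_{m_c}$ up to that slot satisfies $u\alpha_i\in\Phi_-$, so the greedy construction of the $c$-sorting word of $w_0$ must also skip it. The paper reaches this conclusion with a one-line inner-product computation: since $u\omega_i=\mathbf{w}_{h(i,c)}\omega_i=-\omega_{i^*}$ (by Proposition~\ref{prop:w-is-c} and the fact that the later blocks avoid $s_i$), one has $(u\alpha_i,\omega_{i^*})=-(u\alpha_i,u\omega_i)=-(\alpha_i,\omega_i)<0$, forcing $u\alpha_i$ negative. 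Your route packages the same input ($\mathbf{w}_{h(i,c)}\omega_i=w_0\omega_i$) group-theoretically, writing $u=w_0\cdot(\text{an element of }W_{I\setminus\{i\}})$ via $\mathrm{Stab}(\omega_i)=W_{I\setminus\{i\}}$, and then observing that parabolic elements preserve the $\alpha_i$-coefficient. The two arguments are equivalent; yours is a bit longer but makes the mechanism more explicit, while the paper's is slicker. One small remark: the greedy hypothesis is not needed to see that $u'\in W_{I\setminus\{i\}}$ (this follows already from $I_{l'}=\{j:h(j,c)\ge l'\}$ for $l'>h(i,c)$); where greediness actually enters is in guaranteeing that all kept letters of copy $l$ precede position $i$, so that the common prefix $u$ really equals $\mathbf{w}_{h(i,c)}\cdot u'$ with $u'$ supported on copies $>h(i,c)$.
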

\begin{proof}
	It is enough to show that $w_m\alpha_i$ is a negative root for any $i$ not in
	$I_m$. We have
  $$
		0<
		(\alpha_i,\omega_i)=
		( w_m\alpha_i,w_m\omega_i)=
		( w_m\alpha_i,w_0\omega_i)=
  	( w_m\alpha_i,-\omega_{i^*})
	$$
	thus $( w_m\alpha_i,\omega_{i^*})<0$ and so $w_m\alpha_i$ is a negative root.

\end{proof}

\begin{remark}
	Combining together Theorem \ref{thm:singleton-is-prefix} and Proposition
	\ref{prop:w_m_w-is-c-sorting} we get another characterization of
	$c$-singletons: they are all the prefixes of $\mathbf{w}_{m_c}$ up to
	commutations.
\end{remark}

\begin{proposition}
	The sets of rays of $\mathcal{F}_c^\Pi$ and $\mathcal{F}_c^C$ coincide.
	\label{prop:equal-rays}
\end{proposition}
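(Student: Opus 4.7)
The plan is to prove two inclusions between the two sets of rays.

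For the containment of rays of $\mathcal{F}_c^\Pi$ inside $\mathcal{F}_c^C$: fix $c^m\omega_i\in\Pi(c)$ with $0\le m\le h(i,c)$ and a greedy reduced expression of $c$. Proposition \ref{prop:w-is-c} gives $c^m\omega_i=\mathbf{w}_m\omega_i$; Proposition \ref{prop:w_m_w-is-c-sorting} identifies $\mathbf{w}_{m_c}$ as the $c$-sorting word of $w_0$, so $\mathbf{w}_m$ is a prefix of it; Theorem \ref{thm:singleton-is-prefix} then makes $w_m$ a $c$-singleton; and Theorem \ref{thm:rays-of-FC} exhibits $\mathbb{R}_+\cdot c^m\omega_i=\mathbb{R}_+\cdot w_m\omega_i$ as a ray of $\mathcal{F}_c^C$.

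For the reverse inclusion, given any $c$-singleton $w$ and any $i\in I$, one must show $w\omega_i\in\mathbb{R}_+\cdot\Pi(c)$. Using the characterization of $c$-singletons in the remark following Proposition \ref{prop:w_m_w-is-c-sorting}, I would choose a greedy expression of $c$ making $w$ a literal prefix of $\mathbf{w}_{m_c}$, so that $w=\mathbf{w}_m\cdot c_K$ where $K$ is a prefix (in the chosen greedy order) of the $(m{+}1)$-th block $I_{m+1}$ of $\mathbf{w}_{m_c}$. The argument then splits on whether $i\in K$. When $i\notin K$, the factor $c_K$ fixes $\omega_i$ (every simple other than $s_i$ fixes $\omega_i$), as do the blocks $c_{I_l}$ of $\mathbf{w}_m$ with $l>h(i,c)$; stripping them off yields $w\omega_i=\mathbf{w}_{\min(m,h(i,c))}\omega_i=c^{\min(m,h(i,c))}\omega_i$ by Proposition \ref{prop:w-is-c}. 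When $i\in K$, since $K$ is a prefix of $I_{m+1}$ containing $i$, the simple reflections in $c_K$ preceding $s_i$ coincide with those in $c_{I_{m+1}}$ preceding $s_i$; a direct computation then gives $c_K\omega_i=c_{I_{m+1}}\omega_i$, and hence $w\omega_i=\mathbf{w}_{m+1}\omega_i=c^{m+1}\omega_i$, which lies in $\Pi(c)$ because $i\in K\subseteq I_{m+1}$ forces $m+1\le h(i,c)$.

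The hard part will be the case $i\in K$: the identity $c_K\omega_i=c_{I_{m+1}}\omega_i$ relies crucially on having $K$ as a \emph{literal} prefix of $I_{m+1}$ in the chosen greedy expression rather than merely a prefix up to commutations. The structural step I would need to verify carefully is that the prefix-up-to-commutations characterization of $c$-singletons can always be refined to a literal prefix by an appropriate choice of greedy reduced expression of $c$, so that the commutation class of $w$ is genuinely compatible with the partial order on $I_{m+1}$ induced by $c$.
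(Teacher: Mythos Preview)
Your overall strategy matches the paper's: prove both inclusions using Theorem~\ref{thm:rays-of-FC}, Theorem~\ref{thm:singleton-is-prefix}, and Proposition~\ref{prop:w-is-c}. Your first inclusion is exactly the paper's second paragraph. For the reverse inclusion you are more careful than the paper, which simply picks ``the minimum $m$ such that $w$ is a prefix of $\mathbf{w}_m$'' and asserts $w\omega_i=\mathbf{w}_m\omega_i$ without further comment.

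The concern you flag is a genuine gap, and the refinement you hope for fails. Take $c=s_1s_2s_3$ in type $A_3$. Then $h(1,c)=3$, $h(2,c)=2$, $h(3,c)=1$, so $s_1s_2s_3$ is the \emph{unique} greedy reduced expression and $\mathbf{w}_{m_c}=s_1s_2s_3\,|\,s_1s_2\,|\,s_1$. The element $w=s_1s_2s_1$ is a $c$-singleton (commute the adjacent $s_3$ and $s_1$ across the first block boundary to exhibit it as a prefix), yet it is not a literal prefix of $\mathbf{w}_{m_c}$, and there is no other greedy expression to pass to. So your decomposition $w=\mathbf{w}_m\cdot c_K$ with $K$ a literal prefix of $I_{m+1}$ is unavailable here. (The paper's shortcut breaks on the same example: the minimal $m$ with $w$ a prefix of $\mathbf{w}_m$ is $m=2$, but $w\omega_2=c\omega_2\neq c^2\omega_2=\mathbf{w}_2\omega_2$.)

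A clean repair avoids literal prefixes altogether. Let $m_i$ be the number of occurrences of $s_i$ in any reduced word for $w$; this is well-defined and satisfies $m_i\le h(i,c)$ since $w$ is a prefix of $\mathbf{w}_{m_c}$ up to commutations. It suffices to show $w\omega_i=\mathbf{w}_{m_i}\omega_i$, after which Proposition~\ref{prop:w-is-c} yields $c^{m_i}\omega_i\in\Pi(c)$. View $w$ and $w_{m_i}$ as order ideals $D$ and $D'$ in the heap of $\mathbf{w}_{m_c}$. Both contain exactly the nodes $(i,1),\dots,(i,m_i)$ and differ only at nodes $(j,l)$ with $j\neq i$. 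Pass from $D$ to $D\cap D'$ by successively deleting a maximal element of the current ideal lying in $D\setminus D'$, and then from $D\cap D'$ to $D'$ by successively adjoining a minimal element of the complement lying in $D'\setminus D$; each step is right-multiplication by some $s_j$ with $j\neq i$, which fixes $\omega_i$. This handles both of your cases uniformly and closes the gap.
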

\begin{proof}
	Fix a greedy reduced expression for $c$.  Let $\rho$ be a ray of
	$\mathcal{F}_c^C$. By Theorem \ref{thm:rays-of-FC}  there exist a
	$c$-singleton $w$ and a fundamental weight $\omega_i$ such that
	$$
		\rho=\mathbb{R}_+w\omega_i.
	$$
	Let $m$ be the minimum integer such that $w$ is a prefix of $\mathbf{w}_m$.  By
	Proposition \ref{prop:w-is-c} 
	$$ 
	w\omega_i=\mathbf{w}_m\omega_i=c^m\omega_i\in\Pi(c).
	$$
	On the other hand, given any element $c^m\omega_i$ of $\Pi(c)$, let
	$\mathbf{w}_m$ be
	the corresponding sub-word of $c^m$ as in Proposition \ref{prop:w-is-c}; it is
	a $c$-singleton therefore $c^m\omega_i=\mathbf{w}_m\omega_i$ is a point on a ray of
	$\mathcal{F}_c^C$ by Theorem \ref{thm:rays-of-FC}.

\end{proof}

We can now define the polytope $\asso_c^a(W)$.  For any  point $a$ in
$P_\mathbb{R}$ and for any ray $\rho$ of $\mathcal{F}_c^C$ such that
$\rho=\mathbb{R}_+\cdot w\omega_j$, denote by $\mathcal{H}_\rho^a$ the
half-space
$$
	\mathcal{H}_\rho^a:=
	\left\{ \varphi\in P_\mathbb{R}^*
	\mid
	\varphi(w\omega_j)\le
	( a,\omega_j) \right\}
	\,.
$$
The main result in \cite{sortable-polytopality} is that, if $a$ lies in the
interior of the fundamental Weyl chamber,  the intersection of half-spaces

$$
	\asso_c^a(W):=
	\bigcap\mathcal{H}_\rho^a
$$
as $\rho$ runs over all rays of $\mathcal{F}_c^C$ is a simple polytope and its
normal fan is $\mathcal{F}_c^C$.

\begin{proof}[Proof of Theorem \ref{thm:polytopes-are-equal}]
	In view of Proposition \ref{prop:equal-rays}, the two polytopes become
	$$
		\asso_c^a(W)=
		\left\{ \varphi\in P_\mathbb{R}^* \mid 
		\varphi(c^m\omega_i)\le
		( a, \omega_i) 
		\, \forall i\in I , 0\le m\le h(i,c)\right\}	
	$$
  and
	$$
		\asso_c^f(W)=
		\left\{ \varphi\in P_\mathbb{R}^* \mid 
		\varphi(c^m\omega_i)\le
		f(i)
		\, \forall i\in I , 0\le m\le h(i,c)\right\}  
		\,.
	$$
	For any function $f:I\longrightarrow \mathbb{R}$ let $a$ be the point in
	$P_\mathbb{R}$ defined by the conditions
	$$
		(a,\omega_i):=f(i)
	$$
	for all $i\in I$.  Imposing condition \ref{thm-pi:polytopality:2} of Theorem
	\ref{thm-pi:polytopality} on $f$ is equivalent to ask for $a$ to lie in the
	fundamental Weyl chamber; indeed $a$ is in it if and only if the scalar
	product $( \alpha_j, a)$ is positive for every $j\in I$.  Since
	$\alpha_j=\sum_{i\in I}a_{ij}\omega_i$ we have
	$$
		( \alpha_j, a)=
		( \sum_{i\in I}a_{ij}\omega_i, a)=
		\sum_{i\in I}a_{ij}( a,\omega_i)=
		\sum_{i\in I}a_{ij}f(i)
		>0
		\,.
	$$
	We can thus conclude that, for any function $f:I\longrightarrow \mathbb{R}$
	satisfying conditions \ref{thm-pi:polytopality:1} and
	\ref{thm-pi:polytopality:2} of Theorem \ref{thm-pi:polytopality}, choosing
	$a$ as above, we get 
	$$
		\asso_c^a(W)=\asso_c^f(W)
		\,.
	$$

\end{proof}
\begin{remark}
	It is clear that, imposing condition \ref{thm-pi:polytopality:1} of Theorem
	\ref{thm-pi:polytopality}, from our construction we get only the polytopes from
	\cite{sortable-polytopality} obtained from points $a$ invariant under the
	action of $-w_0$. 
\end{remark}

\section*{Acknowledgments}
I would like to thank Andrei Zelevinsky for his guidance through the work on this
paper and for having introduced me to the theory of cluster algebras. Extremely
helpful was the suggestion to consider the negative roots $-\beta_i^c$ instead
of negative simples made by Hugh Thomas.  I am grateful to Sachin Gautam for the
tireless way he provided insights and the many useful conversations we had.  A
special thank to Nathan Reading for sharing his software and to Gregg Musiker
for explaining me the add-on to SAGE he is writing: both tools helped when
producing examples. I am also grateful to Elena Collina, Andrea Appel, Giorgia
Fortuna, Andrew Carrol, Federico Galetto, Shih-Wey Yang, Daniele Valeri, and
Alessandro D'Andrea for listening in moments I needed to express my doubts to
someone.

\nocite{sage}
\bibliographystyle{plain}
\bibliography{bibliography}

\end{document}